\begin{document}

\newcounter{saveenum}

\newtheorem{thm}{Theorem}[section]
\newtheorem{lem}[thm]{Lemma}
\newtheorem{prop}[thm]{Proposition}
\newtheorem{cor}[thm]{Corollary}
\newtheorem{hyp}[thm]{Hypothesis}
\newtheorem*{propn}{Proposition}
\newtheorem*{corn}{Corollary}

\theoremstyle{definition}
\newtheorem{defn}[thm]{Definition}
\newtheorem{eg}[thm]{Example}
\newtheorem*{egn}{Example}
\newtheorem{que}{Question}
\newtheorem{prob}[que]{Problem}

\theoremstyle{remark}
\newtheorem{rem}{Remark}

\newcommand{\Hpi}{\mathrm{Hall}_\pi}
\newcommand{\Hpip}{\mathrm{Hall}_{\pi'}}
\newcommand{\Irr}{\mathrm{Irr}}

\newcommand{\sgp}{{[\leq]}}
\newcommand{\nsgp}{{[\unlhd]}}
\newcommand{\fsgp}{{[\leq_f]}}
\newcommand{\nil}{{[\mathrm{pronil}]}}
\newcommand{\sol}{{[\mathrm{prosol}]}}
\newcommand{\fin}{{[\mathrm{fin}]}}
\newcommand{\fsg}{{[\mathrm{sim}]}}
\newcommand{\elcyc}{{[\mathrm{elcyc}]}}

\newcommand{\ir}{\varrho}

\newcommand{\und}{\underbar}
\newcommand{\Sylp}{\mathrm{Syl}_p}
\newcommand{\piP}{p \in \mathbb{P}}
\newcommand{\la}{\leftarrow}
\newcommand{\IH}{\mathrm{IH}}
\newcommand{\Cf}{\mathrm{Cf}}
\newcommand{\Aut}{\mathrm{Aut}}
\newcommand{\Inn}{\mathrm{Inn}}
\newcommand{\Out}{\mathrm{Out}}
\newcommand{\Comm}{\mathrm{Comm}}
\newcommand{\KComm}{\mathrm{KComm}}
\newcommand{\LComm}{\mathrm{LComm}}
\newcommand{\ICom}{\mathrm{ICom}}
\newcommand{\SNp}{\mathcal{S}}
\newcommand{\e}{\mathrm{e}}
\newcommand{\LFe}{\mathrm{LF-e}}
\newcommand{\db}{\mathrm{db}}
\newcommand{\eb}{\mathrm{eb}}
\newcommand{\Sym}{\mathrm{Sym}}
\newcommand{\Alt}{\mathrm{Alt}}
\newcommand{\ch}{\mathrm{char}}
\newcommand{\PA}{\mathrm{PA}}
\newcommand{\rFin}{\mathrm{Fin}}
\newcommand{\Hom}{\mathrm{Hom}}
\newcommand{\imm}{\mathrm{imm}}
\newcommand{\Core}{\mathrm{Core}}
\newcommand{\rQ}{\mathrm{Q}}
\newcommand{\id}{\mathrm{id}}
\newcommand{\lep}{\leq_{[p]}}
\newcommand{\plh}{\mathrm{ht}_{[p]}}
\newcommand{\Cl}{\mathrm{Cl}}
\newcommand{\GL}{\mathrm{GL}}
\newcommand{\SL}{\mathrm{SL}}
\newcommand{\PSL}{\mathrm{PSL}}
\newcommand{\ord}{\mathrm{ord}^\times}
\newcommand{\pd}{\mathrm{pd}}
\newcommand{\Comp}{\mathrm{Comp}}
\newcommand{\St}{\mathrm{St}}
\newcommand{\Emb}{\mathcal{E}_{p'}}
\newcommand{\EmbLF}{\mathcal{E}^{\mathrm{LF}}_{p'}}
\newcommand{\EmbS}{\mathcal{E}^{\mathrm{sep}}_{p'}}
\newcommand{\CEmbL}{\mathcal{C}^{\mathrm{L}}_{p'}}
\newcommand{\CEmbLF}{\mathcal{C}^{\mathrm{LF}}_{p'}}
\newcommand{\CEmbA}{\mathcal{C}^{\mathrm{ab}}_{p'}}
\newcommand{\CEmbC}{\mathcal{C}^{\mathrm{crit}}_{p'}}
\newcommand{\CEmbS}{\mathcal{C}^{\mathrm{sep}}_{p'}}
\newcommand{\wrd}{\mathrm{wrd}}
\newcommand{\Qd}{\mathrm{Qd}}
\newcommand{\Sp}{\mathrm{Sp}}
\newcommand{\tp}{\mathrm{top}}

\newcommand{\Ob}{\mathrm{Ob}}
\newcommand{\ob}{\mathrm{ob}}
\newcommand{\OI}{\mathrm{OI}}
\newcommand{\Ilhd}{\mathrm{I}^\lhd}

\newcommand{\rC}{\mathrm{C}}
\newcommand{\rE}{\mathrm{E}}
\newcommand{\rF}{\mathrm{F}}
\newcommand{\rH}{\mathrm{H}}
\newcommand{\rN}{\mathrm{N}}
\newcommand{\rO}{\mathrm{O}}
\newcommand{\rZ}{\mathrm{Z}}

\newcommand{\mcA}{\mathcal{A}}
\newcommand{\mcB}{\mathcal{B}}
\newcommand{\mcC}{\mathcal{C}}
\newcommand{\mcD}{\mathcal{D}}
\newcommand{\mcE}{\mathcal{E}}
\newcommand{\mcF}{\mathcal{F}}
\newcommand{\mcG}{\mathcal{G}}
\newcommand{\mcH}{\mathcal{H}}
\newcommand{\mcI}{\mathcal{I}}
\newcommand{\mcJ}{\mathcal{J}}
\newcommand{\mcK}{\mathcal{K}}
\newcommand{\mcL}{\mathcal{L}}
\newcommand{\mcM}{\mathcal{M}}
\newcommand{\mcN}{\mathcal{N}}
\newcommand{\mcO}{\mathcal{O}}
\newcommand{\mcP}{\mathcal{P}}
\newcommand{\mcQ}{\mathcal{Q}}
\newcommand{\mcR}{\mathcal{R}}
\newcommand{\mcS}{\mathcal{S}}
\newcommand{\mcT}{\mathcal{T}}
\newcommand{\mcU}{\mathcal{U}}
\newcommand{\mcV}{\mathcal{V}}
\newcommand{\mcW}{\mathcal{W}}
\newcommand{\mcX}{\mathcal{X}}
\newcommand{\mcY}{\mathcal{Y}}
\newcommand{\mcZ}{\mathcal{Z}}

\newcommand{\bC}{\mathbb{C}}
\newcommand{\bF}{\mathbb{F}}
\newcommand{\bN}{\mathbb{N}}
\newcommand{\bP}{\mathbb{P}}
\newcommand{\bQ}{\mathbb{Q}}
\newcommand{\bR}{\mathbb{R}}
\newcommand{\bZ}{\mathbb{Z}}

\newcommand{\bigast}{\ensuremath{\displaystyle\mathop{\mathlarger{\ast}}}}

\setlength{\parindent}{0pt}

\setlength{\parskip}{3mm}

\title{The number of profinite groups with a specified Sylow subgroup}

\author{Colin D. Reid\\
University of Newcastle\\
School of Mathematical and Physical Sciences\\
University Drive, Callaghan NSW 2308\\
Australia\\
colin@reidit.net}

\maketitle

\begin{abstract}Let $S$ be a finitely generated pro-$p$ group.  Let $\Emb(S)$ be the class of profinite groups $G$ that have $S$ as a Sylow subgroup, and such that $S$ intersects non-trivially with every non-trivial normal subgroup of $G$. In this paper, we investigate the question of whether or not $\Emb(S)$ has finitely many isomorphism classes.  For instance, we give an example where $\Emb(S)$ contains an infinite ascending chain of soluble groups, and on the other hand show that $\Emb(S)$ contains only finitely many isomorphism classes in the case that $S$ is just infinite.\end{abstract}

\emph{Keywords}: Profinite groups, Sylow theory

\section{Introduction}

Groups of prime power order are a pervasive feature of finite group theory.  This is clearest in Sylow's theorem and more generally in the theory of fusion (also known as local analysis).  The immediate goal is to understand the manner in which a $p$-group can be embedded in a finite group, especially with regard to the normalisers of its subgroups, as a tool for understanding finite groups by means of the $p$-groups contained in them.  The theory of fusion in finite groups is well-developed, and in particular played a large role in the classification of finite simple groups.  It has also developed into a more general theory of fusion systems of finite $p$-groups, which do not necessarily arise from fusion within a finite group.  (See \cite{Cra} for an account of this theory.)

Sylow's theorem generalises directly to profinite groups: in a profinite group $G$, every pro-$p$ subgroup is contained in a maximal pro-$p$ subgroup, which we call a $p$-Sylow subgroup, all $p$-Sylow subgroups are conjugate, and if $S$ is a $p$-Sylow subgroup of $G$ then $SN/N$ is a $p$-Sylow subgroup of $G/N$ for every (finite or profinite) quotient of $G$.  In principle, the theory of fusion can be developed for profinite groups in much the same way as for finite groups.  Indeed, the fact that pro-$p$ groups are generally better understood than profinite groups would suggest this as an approach for extending results from the former class to the latter.  However, fusion theory is much less developed for profinite groups than for finite groups.  As far as the author is aware, the first significant foray into this area was a paper by Gilotti, Ribes and Serena (\cite{GRS}); since then, fusion and fusion systems in a profinite context have also been developed by Stancu and Symonds (see \cite{SS} and \cite{Sym}).

A basic problem in this area is to understand the profinite groups that have a given $p$-Sylow subgroup $S$.  Write $p'$ for the set of primes other than $p$.  Any profinite group $G$ has a unique largest normal pro-$p'$ subgroup $\rO_{p'}(G)$, the \emph{$p'$-core} of $G$.  From the point of view of the associated fusion system on $S$ (that is, the category of homomorphisms between closed subgroups of $S$ that are induced by conjugation in $G$), the $p'$-core plays no role, in that fusion in a $p$-Sylow subgroup of $G$ is equivalent to fusion in a $p$-Sylow subgroup of $G/\rO_{p'}(G)$.  In any case, the $p$-Sylow subgroups of $G$ impose no meaningful restriction on the structure of $\rO_{p'}(G)$, for instance we could have $G = S \times H$ where $H$ is any pro-$p'$ group.  So we are left with the following problem.

\begin{prob}Let $S$ be a pro-$p$ group.  Let $\Emb(S)$ be the class of profinite groups that have $S$ as a $p$-Sylow subgroup and have no non-trivial normal pro-$p'$ subgroups.  Describe $\Emb(S)$ in terms of internal properties of $S$.\end{prob}

A natural question to ask here is the following:

\begin{que}\label{mainque}For which pro-$p$ groups $S$ does $\Emb(S)$ contain infinitely many isomorphism classes of profinite group?\end{que}

This question, and variants of it, will be the focus of this paper.  For the purposes of this paper, all subgroups are required to be closed and all homomorphisms are required to be continuous, and a `finite' class of groups is one that contains finitely many isomorphism classes of topological groups.  We will concentrate on the case that $S$ is (topologically) finitely generated, which appears to be more tractable.  The following can be deduced from a theorem of Tate:

\begin{lem}\label{tatefin}Let $S$ be a finitely generated pro-$p$ group.  Then every group in $\Emb(S)$ is virtually pro-$p$.\end{lem}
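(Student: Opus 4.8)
The plan is to prove the equivalent statement that $S$ is open in $G$: any open pro-$p$ subgroup lies in some Sylow conjugate $S^g$, forcing $S^g$, and hence $S$, to be open, so being virtually pro-$p$ is the same as having $S$ open. Equivalently I must show that $|G:S|$, a priori a supernatural number coprime to $p$, is a genuine integer. I would begin by recording two consequences of $S$ being finitely generated. Cohomologically, $H^1(S,\bF_p)=\Hom(S,\bF_p)$ has finite dimension $d=d(S)$; since $S$ is a Sylow subgroup and the corestriction--restriction composite is multiplication by the prime-to-$p$ index $|G:S|$, the restriction $H^1(G,\bF_p)\to H^1(S,\bF_p)$ is injective, and passing to continuous cohomology through the finite quotients $G/N$ it follows that $H^1(G,\bF_p)$ has some finite dimension $e\le d$, attained by $H^1(G/N,\bF_p)$ for all sufficiently small $N$. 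On the automorphism side, $\Aut(S)$ is itself virtually pro-$p$: the kernel of the action $\Aut(S)\to\GL_d(\bF_p)$ on $S/\Phi(S)$ is pro-$p$ by the Burnside basis theorem, and $\GL_d(\bF_p)$ is finite, so $\Out(S)$ is virtually pro-$p$ too and has finite prime-to-$p$ part.

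The cohomological core is an application of Tate's theorem to each finite quotient $G/N$: the group $G/N$ has a normal $p$-complement exactly when the restriction $H^1(G/N,\bF_p)\to H^1(SN/N,\bF_p)$ is an isomorphism. When $e=d$ this map is, for all sufficiently small $N$, an injection of a $d$-dimensional space into $H^1(SN/N,\bF_p)$, which is a quotient of $H^1(S,\bF_p)$ and so has dimension at most $d$; hence it is an isomorphism and $G/N$ is $p$-nilpotent, with normal $p$-complement $\rO^p(G/N)$. Since $\rO^p$ commutes with the surjections $G\to G/N$, taking the inverse limit gives $\rO^p(G)=\varprojlim\rO^p(G/N)$, which is pro-$p'$. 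As it is normal in $G$ and $\rO_{p'}(G)=1$, it is trivial, so $G=G/\rO^p(G)$ is pro-$p$ and $S=G$ is open. This settles the case in which the maximal pro-$p$ quotient of $G$ is as large as $S$ permits.

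The main obstacle is the complementary case $e<d$, in which $G$ need not be $p$-nilpotent and $|G:S|$ can be arbitrarily large in a single finite quotient, as $\bZ/p\rtimes\bZ/q$ shows. The resolution must use the coherence of the system $(G/N)$ together with $\rO_{p'}(G)=1$, so that the large prime-to-$p$ cores $\rO_{p'}(G/N)$ occurring in finite quotients cannot persist in the limit, with the finiteness of the prime-to-$p$ part of $\Out(S)$ supplying the uniform control over fusion of order prime to $p$. Concretely, I would analyse $C_G(S)$, whose unique Sylow subgroup is the central $Z(S)$: any pro-$p$ subgroup centralising $S$ generates together with $S$ a pro-$p$ group, which by maximality lies in $S$. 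Hence $C_G(S)=Z(S)\times K$ with $K=\rO_{p'}(C_G(S))$ a normal $p'$-subgroup of $N_G(S)$ centralising $S$. The heart of the argument, and the step I expect to be hardest, is to show, using $\rO_{p'}(G)=1$ and Tate's control of fusion, both that $K$ is trivial and that $G$ has only finitely many Sylow subgroups; together these bound $|N_G(S):S|$ and $|G:N_G(S)|$, reducing the general case to the $p$-nilpotent situation already handled.
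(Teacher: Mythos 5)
Your cohomological argument disposes only of the degenerate case. By Tate, restriction $\rH^1(G,\bF_p)\to\rH^1(S,\bF_p)$ being an isomorphism forces a normal $p$-complement in $G$, which is trivial since $\rO_{p'}(G)=1$; so your case $e=d$ is precisely the case $G=S$. Every non-trivial instance of the lemma (e.g.\ $\Sym(3)\in\Emb(C_3)$ with $p=3$, where $d=1$ but $e=0$) falls into your case $e<d$, and there you have no proof: you only describe what a proof ``must'' do. Worse, the plan you sketch is essentially circular. Showing that $G$ has finitely many Sylow $p$-subgroups is the statement $|G:\rN_G(S)|<\infty$, which together with your intended bound on $|\rN_G(S):S|$ (via the finite prime-to-$p$ part of $\Out(S)$) \emph{is} the lemma; you indicate no mechanism by which $\rO_{p'}(G)=1$ and fusion control would deliver it. Note also that $K=\rO_{p'}(\rC_G(S))$ is normal in $\rN_G(S)$ but not obviously in $G$, so the hypothesis $\rO_{p'}(G)=1$ does not directly kill $K$, and in any case triviality of $K$ alone would not bound $|G:\rN_G(S)|$.

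The idea you are missing --- and the whole of the paper's one-paragraph proof --- is to apply Tate's theorem not to $G$ or to its finite quotients, but to a well-chosen \emph{open subgroup}, where control of $p$-transfer is automatic. Since $S$ is finitely generated, $\Phi(S)$ is open in $S$, so there is an open normal subgroup $N$ of $G$ with $N\cap S\le\Phi(S)$. For such $N$ one checks $\bigl((SN)'(SN)^p\bigr)\cap S=\Phi(S)=S'S^p$, so by Theorem~\ref{tate} (in the form of Corollary~\ref{tatecor}(i)) the group $SN$, and hence $N$ itself, has a normal $p$-complement; that is, $N/\rO_{p'}(N)$ is pro-$p$. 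Now $\rO_{p'}(N)$ is characteristic in $N$ and $N\unlhd G$, so $\rO_{p'}(N)\le\rO_{p'}(G)=1$, whence $N$ is an open pro-$p$ subgroup of $G$ and $G$ is virtually pro-$p$. This localisation is exactly what sidesteps the obstruction you ran into: one never needs $G$ to be $p$-nilpotent (it usually is not), only the subgroup $SN$, for which the Frattini condition $N\cap S\le\Phi(S)$ makes Tate's criterion hold trivially, and normality of $N$ in $G$ carries the conclusion back to $G$.
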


If $G \in \Emb(S)$, then there is a subgroup $P$ of $S$ which is open and normal in $G$; now $P$ is also finitely generated, so $\Phi(P)$ is also open in $G$.  It follows from some basic extension theory that $G$ is determined as an element of $\Emb(S)$ by the quotient $G/P$ together with its action on $P/\Phi(P)$:

\begin{thm}\label{extnthm}Let $P$ be a finitely generated pro-$p$ group, and let $K$ be a finite group.  Suppose the extensions
\[\xymatrix{1 \ar[r] & P \ar[r] & G \ar[r] & K \ar[r] & 1 }\]
and
\[\xymatrix{1 \ar[r] & P \ar[r] & G^* \ar[r] & K \ar[r] & 1 }\]
admit a common restriction
\[\xymatrix{1 \ar[r] & P \ar[r] & S \ar[r] & T \ar[r] & 1 }\]

where $T$ is a $p$-Sylow subgroup of $K$, and the action of $K$ on $P/\Phi(P)$ is the same in both extensions.

Then the extensions are equivalent, and hence $G \cong G^*$.\end{thm}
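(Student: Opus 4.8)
The plan is to reduce the problem to the cohomological classification of group extensions with the fixed, possibly nonabelian, kernel $P$, and then to exploit the fact that all the relevant cohomology is $p$-primary while the index $[K:T]$ is prime to $p$. Recall that an extension of $K$ by $P$ determines an outer action $\omega\colon K \to \Out(P)$, and that, for a fixed outer action realised by some extension, the equivalence classes of extensions inducing it form a torsor under $\rH^2(K,\rZ(P))$, where $\rZ(P)$ carries the $K$-module structure coming from $\omega$. Two structural facts drive everything. First, $\rZ(P)$ is a pro-$p$ group, so for any finite $K$ the groups $\rH^n(K,\rZ(P))$ are pro-$p$. Second, since $P$ is finitely generated, the kernel of $\Aut(P) \to \GL_d(\bF_p)$ (the action on $P/\Phi(P)$, where $d$ is the minimal number of generators) is pro-$p$; as $\Inn(P)$ acts trivially on $P/\Phi(P)$ this descends to a map $\Out(P) \to \GL_d(\bF_p)$ whose kernel $N$ is again pro-$p$.

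The heart of the argument is the following restriction lemma: if $A$ is any pro-$p$ coefficient module for the finite group $K$ and $T$ is a $p$-Sylow subgroup, then $\mathrm{res}\colon \rH^n(K,A) \to \rH^n(T,A)$ is injective, because $\mathrm{cor}\circ\mathrm{res}$ is multiplication by $[K:T]$, which is prime to $p$ and hence invertible on the pro-$p$ group $\rH^n(K,A)$. Granting for the moment that the two extensions induce the \emph{same} outer action, their difference is a class $c \in \rH^2(K,\rZ(P))$; restricting to $T$ produces the difference of the two restricted extensions $1 \to P \to \pi^{-1}(T) \to T \to 1$, which are $G|_T$ and $G^*|_T$. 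By hypothesis both equal the common restriction $S$, so $\mathrm{res}_T(c) = 0$, and injectivity forces $c = 0$. Thus the extensions are equivalent and $G \cong G^*$.

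It remains to match the outer actions, which I expect to be the main obstacle. The hypothesis that $K$ acts the same way on $P/\Phi(P)$ in both extensions says exactly that $\omega$ and $\omega^*$ have the same image in $\GL_d(\bF_p)$, i.e.\ they agree modulo the pro-$p$ normal subgroup $N$ of $\Out(P)$. Hence $\delta(k) := \omega^*(k)\,\omega(k)^{-1}$ is a (nonabelian) $1$-cocycle of $K$ valued in $N$, and the common restriction to $S$ gives $\delta|_T = 1$. I would then prove the nonabelian analogue of the restriction lemma, namely that the kernel of $\rH^1(K,N) \to \rH^1(T,N)$ is trivial, by an inductive argument up a $K$-invariant central series of $N$ (each factor being an abelian pro-$p$ module, to which the abelian lemma applies, with the usual twisting in the exact sequence of nonabelian cohomology), having first reduced to finite $N$ by passing to the inverse limit over open $K$-invariant subgroups. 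This shows that $\omega$ and $\omega^*$ are conjugate by an element $\nu \in N$; since $\nu$ lies in $N$ it is invisible on $P/\Phi(P)$ and, being fixed by the $T$-action, can be absorbed into the identification of $P$ without disturbing either hypothesis, reducing us to the case $\omega = \omega^*$ treated above. The delicate points are precisely this nonabelian $\rH^1$ computation and the verification that the conjugation can be absorbed compatibly; the abelian $\rH^2$ step, by contrast, is immediate once the outer actions are aligned.
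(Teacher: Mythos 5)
Your abelian step is sound: given equal outer actions, the torsor argument over $\rH^2(K,\rZ(P))$ together with coprime restriction--injectivity (the paper's Proposition~\ref{coprimecohom}) does finish the proof, and this is the same mechanism the paper uses. The gap is in the step you yourself flag as delicate, and it is fatal as stated: the ``nonabelian analogue of the restriction lemma'' is \emph{false}, even for finite $N$. Counterexample: let $G = \GL(2,3)$, $N = Q_8$ its normal quaternion subgroup, and $K = \langle u, \tau \rangle \cong S_3$ a complement, where $u = \left(\begin{smallmatrix}1&1\\0&1\end{smallmatrix}\right)$ and $\tau = \mathrm{diag}(1,-1)$; let $T = \langle \tau \rangle$ be a $2$-Sylow subgroup of $K$ and $R = \langle u \rangle$. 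The map $\delta \colon K \to \rZ(N) = \{\pm I\}$, $\delta(k) = \det(k)I$, is a homomorphism into a central subgroup, hence a $1$-cocycle, and the corresponding complement is $\Delta = \{\delta(k)k : k \in K\}$. This $\Delta$ is \emph{not} $N$-conjugate to $K$: if $m \in N$ satisfied $mKm^{-1} = \Delta$, then $m$ would normalise $R$ (the unique $3$-Sylow subgroup of both complements), whence $[m,R] \le N \cap R = 1$, so $m \in \rC_N(R) = \rZ(N) = \rZ(G)$ and $mKm^{-1} = K \ne \Delta$. Yet $\delta$ restricts to a coboundary on $T$: one computes ${}^\tau i = -i$ for $i = \left(\begin{smallmatrix}0&-1\\1&0\end{smallmatrix}\right) \in N$, so $\delta(\tau) = -I = i \, ({}^\tau i)^{-1}$. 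Thus $[\delta]$ is a nontrivial element of $\ker\bigl(\rH^1(K,N) \to \rH^1(T,N)\bigr)$ even though $N$ is a $2$-group and $[K:T]=3$. Equivalently, $i\Delta i^{-1}$ and $K$ are two complements to $Q_8$ in $\GL(2,3)$ containing the \emph{same} $2$-Sylow subgroup $T$ of $K$, yet not $N$-conjugate; so even your stronger on-the-nose hypothesis $\delta|_T = 1$ does not force $\delta$ to be a coboundary.

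The counterexample also pinpoints why your proposed induction up a central series cannot be repaired by ``the usual twisting''. Passing from $N/Z$ to $N$ ($Z$ central, $K$-invariant), you must lift a $T$-fixed element of $N/Z$ to a $T$-fixed element of $N$; equivalently, you need the connecting map $(N/Z)^T \to \rH^1(T,Z)$ to vanish, and it need not (above, $iZ \in (N/Z)^T$ does not lift to $N^T$, and its image under the connecting map is exactly the class of $\delta|_T$). Twisting is powerless here because $Z$ is central, so twisting by any $N$-valued cocycle leaves the $K$-module $Z$ unchanged. The theorem is of course still true, and the paper's proof shows the right way to spend the hypothesis: rather than using the agreement on $P/\Phi(P)$ merely to confine the discrepancy of outer actions to the pro-$p$ group $N \le \Out(P)$, the paper filters $P$ by its lower $p$-central series $P_{i+1} = [P_i,P]P_i^p$ and invokes Lemma~\ref{cinvlem}(ii) to conclude that the hypothesis pins down the $K$-action on \emph{every} abelian layer $P_i/P_{i+1}$ exactly. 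The two extensions are then compared layer by layer, entirely within abelian $\rH^2(G_i, P_i/P_{i+1})$ where Sylow restriction is injective, followed by an inverse limit. In other words, the $P/\Phi(P)$ hypothesis rigidifies all layers, which lets one avoid nonabelian coefficients altogether; your reduction discards that rigidity and then requires a lemma that is simply not true.
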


\begin{cor}\label{embsizecor}Let $S$ be a finitely generated pro-$p$ group.  Then for all $n$, the number of isomorphism types of profinite group $G$ having $S$ as a Sylow subgroup of index at most $n$ is finite.  In particular, $\Emb(S)$ is at most countably infinite, and $\Emb(S)$ is finite if and only if there is an overall bound on $|G:S|$ for all $G \in \Emb(S)$.\end{cor}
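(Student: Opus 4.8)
The plan is to prove the first assertion — finiteness of the number of isomorphism types of $G$ with $|G:S| \le n$ — and then to deduce the remaining two statements formally. Fix $n$ and suppose $G$ has $S$ as a $p$-Sylow subgroup with $|G:S| \le n$. First I would produce a canonical open normal pro-$p$ subgroup by taking $P = \Core_G(S) = \bigcap_{g \in G} S^g$. Since $S^g$ depends only on the coset $N_G(S)g$ and $|G:N_G(S)|$ divides $|G:S| \le n$, this is an intersection of at most $n$ conjugates of $S$, each of index $|G:S| \le n$ in $G$; using $|G:H_1 \cap \dots \cap H_m| \le \prod_i |G:H_i|$ this yields $|G:P| \le n^n$. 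Thus $P$ is open and normal in $G$, is pro-$p$ (being contained in $S$), and satisfies $|S:P| \le n^n$.

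The key finiteness input is then that $S$, being finitely generated pro-$p$, has only finitely many open subgroups of each $p$-power index, and in particular only finitely many of index at most $n^n$; so $P$ ranges over a finite set as $G$ varies. I would fix one such $P$ and examine the remaining data attached to $G$. Writing $K = G/P$ and $T = S/P = SP/P$, the profinite Sylow theory quoted in the introduction gives that $T$ is a $p$-Sylow subgroup of the finite group $K$, and $|K| = |G:P| \le n^n$. The extension $1 \to P \to G \to K \to 1$ restricts, on the preimage $S$ of $T$, to the fixed extension $1 \to P \to S \to T \to 1$ determined by the inclusion $P \le S$. Finally, since $[P,P] \le \Phi(P)$, the conjugation action of $G$ on the finite elementary abelian $p$-group $P/\Phi(P)$ factors through $K$.

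Now I would invoke Theorem \ref{extnthm}: it shows that the isomorphism type of $G$ is determined by the tuple consisting of $P$, the finite group $K$ together with its Sylow $T$ and the restriction $1 \to P \to S \to T \to 1$, and the action $K \to \Aut(P/\Phi(P))$. Each ingredient ranges over a finite set — finitely many $P$; for each $P$ finitely many finite groups $K$ of order at most $n^n$ (up to isomorphism) carrying a chosen Sylow $p$-subgroup; and finitely many homomorphisms between the finite groups $K$ and $\Aut(P/\Phi(P))$ — so there are only finitely many isomorphism types of $G$, proving the first assertion. The main obstacle is exactly this bounding step: everything hinges on exhibiting a single open normal pro-$p$ subgroup whose index is bounded purely in terms of $n$, so that the order of $K$, the index $|S:P|$, and hence all of the extension data fed into Theorem \ref{extnthm} lie in finite sets.

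For the remaining statements I would argue as follows. By Lemma \ref{tatefin} every $G \in \Emb(S)$ is virtually pro-$p$; an open pro-$p$ subgroup of $G$ lies in some $p$-Sylow subgroup, which is conjugate to $S$, so $S$ itself is open and $|G:S|$ is finite. Therefore $\Emb(S) = \bigcup_{n \ge 1} \{ G \in \Emb(S) : |G:S| \le n \}$ is a countable union of finite sets, giving that $\Emb(S)$ is at most countably infinite. If there is an overall bound $|G:S| \le n$ then $\Emb(S)$ is contained in the finite set provided by the first assertion and so is finite; conversely, if $\Emb(S)$ is finite then $\{ |G:S| : G \in \Emb(S) \}$ is a finite set of integers and hence bounded.
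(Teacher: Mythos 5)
Your proposal is correct and takes essentially the same route as the paper: the paper's one-line proof likewise rests on Theorem~\ref{extnthm} together with the fact that a finitely generated pro-$p$ group has only finitely many open subgroups of any given finite index, and the details it calls ``immediate'' --- choosing an open normal $P \le S$ of bounded index (e.g.\ the core of $S$), then pigeonholing over the finitely many choices of $P$, of the finite quotient $K$ with its Sylow subgroup and restriction data, and of the action $K \rightarrow \Aut(P/\Phi(P))$ --- are exactly what you spell out. Your deduction of countability and of the boundedness equivalence via Lemma~\ref{tatefin} is also the intended argument.
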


So Question~\ref{mainque} is equivalent to asking whether there is a bound on $|G:S|$ (or equivalently on $|G:\rO_p(G)|$, or on $|G:\Phi(\rO_p(G))|$).

It is also of interest to consider two more restricted classes of $p'$-embeddings:

\begin{defn}Let $G$ be a profinite group.  A \emph{component} of $G$ is a subnormal subgroup $Q$ such that $Q$ is perfect and $Q/\rZ(Q)$ is simple.  (Note that these conditions ensure that $Q$ is finite.)  Define the \emph{layer} $\rE(G)$ of $G$ to be the closed subgroup of $G$ generated by the components of $G$.  Given a pro-$p$ group $S$, define $\EmbLF(S)$ to be the class of groups $G \in \Emb(S)$ such that $\rE(G)=1$.  Define $\EmbS(S)$ to be the class of groups in $\Emb(S)$ that are $p$-separable, that is, which have no non-abelian composition factors of order divisible by $p$.

The \emph{pro-Fitting subgroup} $\rF(G)$ of $G$ is the unique largest normal pronilpotent subgroup of $G$.  The \emph{generalised pro-Fitting subgroup} $ \rF^*(G)$ of $G$ is given by $ \rF^*(G) = \rF(G)\rE(G)$.\end{defn}

In a virtually pronilpotent group, the generalised pro-Fitting subgroup contains its own centraliser (see \cite{ReiF}), so if $G \in \EmbLF(S)$ for a finitely generated pro-$p$ group $S$, then $\rO_p(G)$ contains its own centraliser in $G$, and indeed $G/\rO_p(G)$ acts faithfully on $\rO_p(G)/\Phi(\rO_p(G))$.  So if $S$ is finite, or more generally if $S$ has finite subgroup rank, then we obtain a bound on $|G/\rO_p(G)|$, so $\EmbLF(S)$ is finite.  Even in this case it can happen that $\Emb(S)$ is infinite: for instance, $S$ may be the $p$-Sylow subgroup of infinitely many finite simple groups.  More interesting is the case when $\EmbLF(S)$ or $\EmbS(S)$ is infinite.  Consider for instance the following:

\begin{prop}\label{ascembed}Let $p$ and $q$ be distinct primes.  Then there is a there is a $2$-generator metabelian pro-$p$ group $S$ and an infinite ascending chain
\[ S < G_0 < G_1 < G_2 < \dots\]
of profinite groups, each open in the next, with the following properties:

the union $G = \bigcup_{i \ge 0} G_i$ is a soluble group of derived length $3$, and $G=SQ$ where $Q$ is a countably infinite discrete elementary abelian $q$-group;

for all $i \ge 0$, $\rO_{p'}(G_i)=1$, so $G_i \in \EmbS(S)$;

the fusion systems $\mcF_{G_i}(S)$ are pairwise non-isomorphic; indeed, the fusion of conjugacy classes of $S$ in $G_i$ and $G_j$ is inequivalent for all $i \not= j$.\end{prop}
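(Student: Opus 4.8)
The plan is to realise $S$ as a \emph{non-normal} Sylow subgroup of an affine-type soluble group. The first point to observe is \emph{why} $S$ cannot be normal in the $G_i$: if $S \trianglelefteq G_i$, then since $\rO_{p'}(G_i)=1$ no non-trivial $q$-element of $G_i/S$ can induce an inner automorphism, so $G_i/S$ embeds into $\Out(S)$; but for a $2$-generator metabelian pro-$p$ group the quotient $S/S'$ is $2$-generated, and the $q$-torsion of $\Out(S)$ injects into $\Aut(S/S')$, which has bounded $q$-rank, contradicting $|G_i:S|=q^i\to\infty$. Hence in the $G_i$ the group $S$ must move, and the growing fusion will come from conjugating elements of $S$ by elements outside $\rN_{G_i}(S)$. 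Accordingly I would take $P=\bF_p[[T]]$, let $C=\bZ_p=\langle t\rangle$ act on $P$ by $\rho(t)=$ multiplication by $1+T$, and set $S=P\rtimes C$. Since $P/(t-1)P=P/TP=\bF_p$, we get $S/S'\cong\bZ_p\times\bF_p$, so $S$ is a $2$-generator metabelian pro-$p$ group. The ambient group will be $G=P\rtimes H$ with $H=Q\rtimes C$, and $G_i=P\rtimes(W_i\rtimes C)$ for a chain of finite $C$-invariant $q$-subgroups $W_i\le Q$.

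The heart of the argument, and the step I expect to be the main obstacle, is to produce inside $\GL(P)$ an infinite elementary abelian $q$-group that is \emph{normalised but not centralised} by $C$ and acts faithfully, while keeping $P$ cyclic over $C$ (which is what forces $S$ to be $2$-generated). The difficulty is that the automorphisms of $P$ commuting with $\rho(t)$ are exactly multiplication by units of $\bF_p[[T]]$, whose $q$-torsion is only $\mu_q$; a large $q$-group can neither centralise $C$ (which would make $S$ normal) nor fit into this commutative centraliser. The device that resolves this is to look in the centraliser of $\rho(t)^{p^i}=$ multiplication by $(1+T)^{p^i}=1+T^{p^i}$: with $R_i=\bF_p[[T^{p^i}]]$ the algebra $\mathrm{End}_{R_i}(P)$ is the matrix algebra $M_{p^i}(R_i)$, because $P$ is free of rank $p^i$ over $R_i$ on the basis $\{(1+T)^j:0\le j<p^i\}$. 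Let $W_i\le\GL_{p^i}(R_i)$ be the torus of operators diagonal in this basis with eigenvalues in $\mu_q$, so $W_i\cong(\bZ/q)^{p^i}$ when $q\mid p-1$ (for general $q$ the eigenvalues lie in $\bF_{p^d}$, $d=\mathrm{ord}_q(p)$, and one takes the $\bF_p$-rational Frobenius-stable subtorus, of rank $\sim p^i/d$). Now $\rho(t)$ sends $(1+T)^j\mapsto(1+T)^{j+1}$ for $j<p^i-1$ and $(1+T)^{p^i-1}\mapsto(1+T^{p^i})\cdot(1+T)^0$, i.e. it is a twisted cyclic permutation of the basis; conjugating $W_i$ by it cyclically shifts the eigenvalues, the $R_i$-unit twist cancelling, so $C$ normalises $W_i$ and acts on it through $\bZ/p^i$ by the regular representation. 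A short check shows $W_i\subseteq W_{i+1}$ (a period-$p^i$ diagonal is a period-$p^{i+1}$ diagonal in the finer basis), so I set $Q=\bigcup_i W_i$, a countable elementary abelian $q$-group on which $C$ acts faithfully.

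With $G_i=P\rtimes(W_i\rtimes C)$ the remaining properties are then routine. As $(1+T)^\gamma$ has infinite multiplicative order for $\gamma\ne0$ while $W_i$ consists of finite-order operators, $\rho$ is faithful on $H_i=W_i\rtimes C$; hence $\rC_{G_i}(P)=P$, so any normal $q$-subgroup of $G_i$ centralises $P$, lies in $P$, and is trivial, giving $\rO_{p'}(G_i)=1$. All $G_i$ are soluble, hence $p$-separable, so $G_i\in\EmbS(S)$. Since $|G_i:S|=|W_i|$ is a power of $q$, $S$ is a $p$-Sylow subgroup, and $S<G_0<G_1<\cdots$ is strictly increasing with each term open in the next and union $G=P\rtimes(Q\rtimes C)=SQ$, with $Q$ discrete (as $Q\cap G_i=W_i$ is finite). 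Finally $G/P\cong Q\rtimes C$ is metabelian, so $G''\le P$ and $G'''=1$, while $G''\ne1$ since $[Q,C]\ne1$ acts non-trivially on $P$; thus $G$ has derived length exactly $3$.

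For the fusion statement I would use that $P$ is characteristic in $S$: it is the preimage of the torsion subgroup $\bF_p$ of $S/S'\cong\bZ_p\times\bF_p$. Since $P\trianglelefteq G_i$ and $\rC_{G_i}(P)=P$, the automiser of $P$ in the fusion system is $\Aut_{\mcF_{G_i}(S)}(P)=\rN_{G_i}(P)/\rC_{G_i}(P)=G_i/P\cong W_i\rtimes C$, whose largest normal $q$-subgroup is $W_i$, of order $q^{p^i}$ (resp. $q^{\sim p^i/d}$). An isomorphism $\mcF_{G_i}(S)\to\mcF_{G_j}(S)$ is induced by an automorphism of $S$, which preserves the characteristic subgroup $P$ and so induces an isomorphism $\Aut_{\mcF_{G_i}(S)}(P)\cong\Aut_{\mcF_{G_j}(S)}(P)$; comparing the orders of their largest normal $q$-subgroups forces $i=j$. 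The same invariant distinguishes the partitions of the conjugacy classes of $S$ (already on the classes meeting $P$, which are the $C$-orbits on $P$ fused into $(W_i\rtimes C)$-orbits). The only delicate verifications I anticipate are the twisted-permutation computation showing that $C$ normalises each $W_i$ with the stated regular action, and, for general $q$, the Frobenius-rationality bookkeeping needed to keep $S$ exactly $2$-generated while the $q$-operators carry eigenvalues in $\bF_{p^d}$.
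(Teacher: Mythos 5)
Your construction is correct --- and architecturally identical to the paper's ($G_i = F \rtimes (Q_i \rtimes Z)$ with $Q_i \cong C_q^{p^i}$ indexed by $\bZ_p/p^i\bZ_p$ and $Z$ acting by the shift) --- precisely when $q \mid p-1$. The genuine gap is the general case, and it is not a matter of ``bookkeeping''. The proposition quantifies over all pairs of distinct primes, and your device for manufacturing the $q$-elements (eigenvalues in $\mu_q \subset \bF_p^\times$ on an $R_i$-basis of $P = \bF_p[[T]]$) has no analogue for many pairs. First, your parenthetical fix is already dimensionally impossible in general: a ``Frobenius-stable subtorus of rank $\sim p^i/d$'' built from blocks of size $d = \ord(p,q)$ requires $d \mid p^i$, i.e.\ $d$ a power of $p$; for $(p,q) = (2,7)$ one has $d = 3$, so no such subtorus exists. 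Second, and more seriously, for such pairs nothing can replace it. Indeed, suppose $W$ is any non-trivial finite elementary abelian $q$-subgroup of $\Aut(\bF_p[[T]])$ normalised by $C = \overline{\langle 1+T \rangle}$. The closed $C$-submodules of $P$ are exactly the ideals $T^kP$, so $P$ is indecomposable as a $C$-module. Maschke splits $P$ into $W$-isotypic closed summands $eP$, which $C$ permutes; indecomposability forces a single $C$-orbit of summands, so $P^W = 0$ and the orbit has $p$-power length $p^j$. Choose an open subgroup $C' = p^{j'}\bZ_p$ centralising $W$ and stabilising each summand: then each $eP$ is a direct summand of the free $\bF_p[[C']]$-module $P$, hence free of rank $p^{j'-j}$; but $eP$ is also a vector space over $e\bF_p[W] \cong \bF_{p^d}$, compatibly with the $C'$-action, hence a free $\bF_{p^d}[[C']]$-module, whence $d \mid p^{j'-j}$. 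So $W \neq 1$ forces $d$ to be a power of $p$: for $(2,7)$ your groups $W_i$ do not exist, and no chain of the proposed shape $P \rtimes (W_i \rtimes C)$ exists over this $S$. (When $d$ is a non-trivial $p$-power the idea can be rescued, but by an argument you do not give: the Frobenius of $\bF_{p^d}/\bF_p$ is a single Jordan block in $\GL_d(\bF_p)$, hence conjugate to the regular representation of $C_d$, and one can then place $\mu_q \subset \bF_{p^d}^\times$ Singer-style in shift-permuted blocks of size $d$.)

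For comparison, the paper gets uniformity in $q$ by a different device: it takes the coefficients in $\bF = \bF_{p^q}$ and lets the $j$-th copy of $C_q$ act as the Frobenius of $\bF/\bF_p$ on the function values over the coset $j$ of $p^i\bZ_p$; no congruence between $p$ and $q$ enters. Your analysis exposes a real tension here, because enlarging the coefficient field is exactly what destroys your $2$-generation argument: the paper's own $F \rtimes Z$ has $F/[F,Z] \cong \bF_{p^q}$, so its Frattini quotient has rank $q+1$ rather than $2$ --- the paper pays for uniformity in $q$ precisely where you pay for $d(S)=2$, and reconciling the two is the missing idea. Everything else in your write-up is sound: $\rO_{p'}(G_i)=1$, solubility of derived length exactly $3$, discreteness of $Q$, and the fusion-system distinction via the characteristic subgroup $P$ and $\rO_q\bigl(\Aut_{\mcF_{G_i}(S)}(P)\bigr) = W_i$, which is if anything cleaner than the paper's argument (the paper instead compares the $q$-parts of conjugacy class sizes in finite quotients of the $G_i$). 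For the paper's stronger claim about fusion of conjugacy classes you still owe one verification: that the trivial upper bound $|G_i : S| = q^{p^i}$ on the number of $S$-classes fused into a single $G_i$-class is attained, e.g.\ by exhibiting $x \in P$ with trivial stabiliser in $W_i \rtimes C$ (such as $x = \sum_j T^{p^i a_j}(1+T)^j$ with rapidly increasing exponents $a_j$).
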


Nevertheless, there are significant restrictions on the structure of $p'$-embeddings of $2$-generator pro-$p$ groups (See Theorem~\ref{2genthm} below).  The reason for this is the role played normal subgroups $P$ of a pro-$p$ group $S$ that are not contained in $\Phi(S)$, and in the $2$-generator case, $P \not\le \Phi(S)$ implies $S/P$ is cyclic (in particular, $P \ge S'$).  Indeed, for groups $S$ such that $P \not\le \Phi(S)$ for only finitely many normal subgroups $P$, we obtain the following:

\begin{thm}\label{obphithm}Let $S$ be an infinite finitely generated pro-$p$ group.  Let $\mcK$ be the set of open normal subgroups of $S$ that are not contained in $\Phi(S)$.  Suppose that $\mcK$ is finite.  Then $\Emb(S) = \EmbLF(S)$ and $\EmbS(S)$ is finite.  If in addition $|S:S^{(n)}|$ is finite for all $n$, then $\Emb(S)$ is finite.\end{thm}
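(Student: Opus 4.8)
The plan is to reduce everything to bounding $|G:S|$ and then to exploit the hypothesis on $\mcK$ through the single subgroup $R := \bigcap_{N \in \mcK} N$. First I record the reductions. By Lemma~\ref{tatefin} every $G \in \Emb(S)$ is virtually pro-$p$, so $P := \rO_p(G)$ is open in $S$ and normal in $G$; by Corollary~\ref{embsizecor} it suffices to bound $|G:P|$ as $G$ ranges over the relevant class. Two preliminary observations: (i) since $\rO_{p'}(G)=1$, every component of $G$ has order divisible by $p$ — otherwise the product of the $p'$-components would be a non-trivial normal pro-$p'$ subgroup — and, using that a prime dividing the Schur multiplier of a finite simple group divides its order, this gives $\EmbS(S) \subseteq \EmbLF(S)$; (ii) if $\rE(G)=1$ then $\rF^*(G)=\rF(G)=P$, so $\rC_G(P) \le P$, and since the kernel of $\Aut(P) \to \GL(P/\Phi(P))$ is pro-$p$ while $\rO_p(G/P)=1$, the quotient $G/P$ acts faithfully on $V := P/\Phi(P)$.

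Next comes the core mechanism. Since $\mcK$ is finite and non-empty (it contains $S$), $R$ is an open normal subgroup of $S$, and by construction it is the minimum of $\mcK$: every open normal subgroup of $S$ outside $\Phi(S)$ contains $R$. Now suppose $G \in \EmbLF(S)$ with $P \not\le \Phi(S)$. Then $P$ is an open normal subgroup of $S$ outside $\Phi(S)$, so $P \in \mcK$ and hence $R \le P \le S$; thus $P$ lies among the finitely many open normal subgroups of the finite group $S/R$. In particular $\dim_{\bF_p} V = d(P)$, the minimal number of generators of $P$, is bounded by a constant depending only on $S$, so $|G:P| \le |\GL(V)|$ is bounded and these $G$ fall into finitely many isomorphism classes.

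It remains to dispose of the case $P \le \Phi(S)$ and to rule out components. For $\EmbS(S)$ I claim $P \not\le \Phi(S)$ always. Indeed $G$ is $p$-soluble with $\rO_{p'}(G)=1$, so $\rC_G(P) \le P$; if $P \le \Phi(S)$ then $\bar G := G/P$ is a non-trivial $p$-soluble group with $\rO_p(\bar G)=1$, whence $\rO_{p'}(\bar G) \ne 1$. Let $F \trianglelefteq G$ be its preimage and $H$ a Hall $p'$-subgroup of $F$, so that $F = P \rtimes H$ with $H \ne 1$; the Frattini argument gives $G = P\,\rN_G(H)$, and since $P \le \Phi(S)$ consists of non-generators we get $S \le \rN_G(H)$ and therefore $G = \rN_G(H)$. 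Then $H$ is a non-trivial normal pro-$p'$ subgroup of $G$, contradicting $\rO_{p'}(G)=1$. Combined with the previous paragraph, this proves that $\EmbS(S)$ is finite.

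Finally I turn to $\Emb(S)=\EmbLF(S)$ and the strongest conclusion. If $\rE(G) \ne 1$, then $[\rE(G),P]=1$, so $\rE(G) \cap S$ centralises $P$ inside $S$; multiplying a strictly descending chain of open normal subgroups of $S$ contained in the infinite group $P$ by the fixed subgroup $\rE(G) \cap S$ produces infinitely many distinct open normal subgroups of $S$, and — provided one checks the key point that $\rE(G) \cap S \not\le \Phi(S)$ — these all lie outside $\Phi(S)$, contradicting the finiteness of $\mcK$; hence $\Emb(S)=\EmbLF(S)$. For a general $G \in \EmbLF(S)$ the only case left open by the core mechanism is $P \le \Phi(S)$; as in the previous paragraph this forces $\bar G$ to have trivial $p$-core \emph{and} (since the $\EmbS$ argument now fails) non-trivial layer, so $\bar G$ is not $p$-soluble and $S$ acquires non-abelian simple sections of unbounded order. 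The extra hypothesis that $|S:S^{(n)}|$ is finite for all $n$ should exclude exactly this behaviour, reducing $\Emb(S)$ to $\EmbS(S)$ and hence to the finite case above. Making this last link precise — tying the presence of non-abelian composition factors in $\bar G$, in the configuration $\rO_p(G) \le \Phi(S)$, to a failure of the finite-index condition on the derived series of $S$ — is the step I expect to be the main obstacle.
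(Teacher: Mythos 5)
Your reductions and your treatment of the first two assertions are essentially sound, though they take a partly different route from the paper. The ``core mechanism'' (any $P=\rO_p(G)$ with $P\not\le\Phi(S)$ lies in $\mcK$, hence contains $R=\bigcap_{N\in\mcK}N$, which bounds $d(P)$ and then $|G:P|$ via the faithful action of $G/P$ on $P/\Phi(P)$) matches the paper's non-Frattini case, and your Hall-subgroup/Frattini-argument proof that a $p$-separable $p'$-embedding can never be Frattini is a correct, more elementary substitute for the paper's appeal to Lemma~\ref{fratemblem}. The point you flag but do not prove in your argument for $\Emb(S)=\EmbLF(S)$, namely that $\rE(G)\cap S\not\le\Phi(S)$ whenever $\rE(G)\ne 1$, is a real hole in the write-up, but a fillable one: if $\rE(G)\cap S\le\Phi(S)$ then $S\rE(G)$ has a normal $p$-complement by Corollary~\ref{tatecor}, which forces $\rE(G)=1$ because $\rE(G)$ is generated by perfect components of order divisible by $p$; this is exactly Corollary~\ref{finlfcor}, which the paper combines with the K\H{o}nig's lemma statement (Lemma~\ref{obphilem}, giving that $\mcK$ finite puts every finite normal subgroup of $S$ inside $\Phi(S)$) instead of your descending-chain construction.

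The genuine gap is the final assertion, which you explicitly leave open: you give no argument that finiteness of $|S:S^{(n)}|$ for all $n$ makes $\Emb(S)$ finite, and your guess at the shape of the argument (that the hypothesis should \emph{exclude} the Frattini configuration, ``reducing $\Emb(S)$ to $\EmbS(S)$'') is not the right shape --- Frattini $p'$-embeddings with $\rE_p(G/P)\ne 1$ are not excluded, they are \emph{bounded}. The paper's chain of ideas, which is the real content here, runs as follows. Fix $t$ with $d(S)-1\le t$ and $|S:K|\le p^t$ for all $K\in\mcK$. For Frattini $G$, let $E/P=\rE_p(G/P)$; then $E\cap S\not\le\Phi(S)$ by Corollary~\ref{tatecor}, so $E\cap S\in\mcK$ and $d(E\cap S)\le tp^t+1$ by the Schreier index formula. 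Hence every component $Q$ of $G/P$ has $d_p(Q)\le tp^t+1$, hence $\deg(Q)$ bounded in terms of $p$ and $t$ by Lemma~\ref{dpdeg}, hence by Zassenhaus's theorem every soluble subgroup of $Q$ has derived length at most some $l=l(p,t)$; in particular the $p$-Sylow subgroup $T=(E\cap S)/P$ of $E/P$ satisfies $T^{(l)}=1$. Since $(S/P)/T$ is a $p$-group of order at most $p^t$, we get $(S/P)^{(t)}\le T$ and so $(S/P)^{(t+l)}=1$; therefore $P\ge S^{(t+l)}$, which is open precisely by the extra hypothesis. This bounds $|S:P|$, hence $d(P)$, hence $|G:P|\le|\GL(d(P),p)|$ by Corollary~\ref{lflem}, and finiteness follows from Corollary~\ref{embsizecor}. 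Without the rank-to-degree input (Lemma~\ref{dpdeg}) and Zassenhaus's theorem, there is no way to convert the membership $E\cap S\in\mcK$ into solubility of $S/P$ of bounded derived length, which is the only place the derived-series hypothesis can enter; so as it stands your proposal does not establish the last sentence of the theorem.
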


The hypotheses of Theorem~\ref{obphithm} are immediately satisfied if $S$ is generated by $2$ elements and $|S:S^{(n)}|$ is finite for all $n$, because the order of a cyclic quotient is at most $|S:S'|$.  The hypotheses of Theorem \ref{obphithm} are also satisfied by all just infinite pro-$p$ groups of infinite subgroup rank.  As a result we obtain the following:

\begin{thm}\label{jiemb}Let $S$ be a just infinite pro-$p$ group.  Then $\Emb(S)$ is finite.  In other words, only finitely many just infinite groups have $S$ as a Sylow subgroup.\end{thm}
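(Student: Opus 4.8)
The plan is to derive the theorem from Theorem~\ref{obphithm} after two cheap reductions. First, a just infinite pro-$p$ group $S$ is automatically infinite and finitely generated: if it is infinite then $\Phi(S)$ is a non-trivial closed normal subgroup (were $\Phi(S)=1$, $S$ would be elementary abelian, and an infinite elementary abelian pro-$p$ group is not just infinite), so just-infiniteness forces $\Phi(S)$ open, whence $S/\Phi(S)$ is finite and $S$ is finitely generated. Secondly, for every $n$ the subgroup $S^{(n)}$ is characteristic, hence normal, hence trivial or open; either way $|S:S^{(n)}|$ is finite. Thus the hypothesis ``$|S:S^{(n)}|$ finite for all $n$'' of Theorem~\ref{obphithm} holds for free, and the strong conclusion of that theorem will give $\Emb(S)$ finite the moment we know that $\mcK$, the set of open normal subgroups of $S$ not contained in $\Phi(S)$, is finite.

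So everything reduces to finiteness of $\mcK$. Since $S$ is just infinite, every non-trivial normal subgroup is open, so each $P\in\mcK$ is open; and since $S$ is finitely generated it has only finitely many open subgroups of each index. Hence $\mcK$ is finite if and only if the indices $|S:P|$ for $P\in\mcK$ are bounded. If they were unbounded then, using that $S/\Phi(S)$ has only finitely many subspaces, I would pass to an infinite subfamily of normal subgroups $P_1,P_2,\dots$ with a common image $P_n\Phi(S)=M$; this subfamily still has unbounded index, so its intersection is a closed normal subgroup of infinite index and is therefore trivial. Because $\Phi(S/P_n)=\Phi(S)P_n/P_n=M/P_n$, each finite quotient $S/P_n$ has the same fixed number of generators while $|S:P_n|\to\infty$: the group $S$ would admit arbitrarily large finite quotients of bounded generator rank with trivial common kernel.

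The task is to rule this configuration out, and here the subgroup rank enters via a dichotomy. When $S$ has infinite subgroup rank I would appeal to the structure theory of just infinite pro-$p$ groups (Wilson's classification into branch groups and groups with an open subgroup that is a direct power of a hereditarily just infinite group): in either case the normal subgroups of $S$ that survive in $S/\Phi(S)$ are squeezed above finitely many ``top-level'' normal subgroups, leaving no room for the unbounded family constructed above, so $\mcK$ is finite and Theorem~\ref{obphithm} finishes the case. When instead $S$ has finite subgroup rank, I would not rely on $\mcK$ but on the observation recorded before Proposition~\ref{ascembed}: the faithful action of $G/\rO_p(G)$ on the bounded-dimensional space $\rO_p(G)/\Phi(\rO_p(G))$ bounds $|G/\rO_p(G)|$ for $G\in\EmbLF(S)$, so $\EmbLF(S)$ is finite. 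It remains to control the layer $\rE(G)$: as $\rO_{p'}(G)=1$, the product of the components of $p'$-order is a normal pro-$p'$ subgroup and hence trivial, so every component has order divisible by $p$ and contributes a non-trivial finite $p$-subgroup of $S$. Finite rank bounds the number of such components, and when $S$ is torsion-free it has no non-trivial finite subgroups at all, forcing $\rE(G)=1$ and $\Emb(S)=\EmbLF(S)$; the residual torsion case is pinned down by the fact that $S$ then has only finitely many finite subgroups up to conjugacy. A bound on $|G:S|$ results, and Corollary~\ref{embsizecor} gives $\Emb(S)$ finite.

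The routine parts are the reductions of the first two paragraphs and the finite-rank appeal to the centraliser argument already in the paper. The genuine obstacle is the infinite-rank step: translating ``$S$ has infinite subgroup rank'' into the statement that \emph{all} normal subgroups of sufficiently large index are trapped inside $\Phi(S)$. This is the technical heart of the proof and is exactly where the classification of just infinite pro-$p$ groups is needed.
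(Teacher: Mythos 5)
Your opening reduction contains the decisive error: from ``$S^{(n)}$ is trivial or open'' you conclude that $|S:S^{(n)}|$ is finite either way, but if $S^{(n)}=1$ then $|S:S^{(n)}|=|S|$ is infinite, since a just infinite group is by definition infinite. So the extra hypothesis of Theorem~\ref{obphithm} does \emph{not} come for free: it holds exactly when $S$ is insoluble (then every $S^{(n)}$ is a non-trivial closed normal subgroup, hence open). This is precisely why the paper's proof splits into two cases. For insoluble $S$ it applies Theorem~\ref{obphithm} in full; for soluble $S$ the last non-trivial term of the derived series is open and abelian, so $S$ is virtually abelian of finite subgroup rank $r$, and then $\rE(G)=1$ (from the first conclusion of Theorem~\ref{obphithm}, which needs only finiteness of $\mcK$) together with Corollary~\ref{lflem} embeds $G/\rO_p(G)$ into $\GL(r,p)$, after which Corollary~\ref{embsizecor} finishes. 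Your finite-rank branch is meant to play the role of the soluble case, but your dichotomy only closes the hole if you also prove that infinite subgroup rank implies insolubility (equivalently, that a soluble just infinite pro-$p$ group is virtually abelian, hence of finite rank); you never do, and your stated justification for invoking Theorem~\ref{obphithm} in the infinite-rank branch is the false claim above.

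There are two further gaps. First, you have misidentified the technical heart: finiteness of $\mcK$ is a one-line consequence of Lemma~\ref{obphilem}, because in a just infinite group the only closed normal subgroup of infinite index is the trivial one, which certainly lies in $\Phi(S)$; condition (ii) of that lemma therefore holds trivially, and no appeal to Wilson's classification is needed. As sketched, your route would not work anyway: the configuration you construct (arbitrarily large finite quotients of bounded generator rank with trivial common kernel) is not self-contradictory --- $\bZ_p$ itself has exactly this property --- and ``squeezed above finitely many top-level normal subgroups'' is an assertion, not an argument. Second, in the finite-rank case your control of the layer fails. Knowing that $S$ has finitely many finite subgroups up to conjugacy does not bound $|G:S|$: a component of $G$ could be any of infinitely many finite simple groups sharing one and the same small $p$-Sylow subgroup, which is exactly the phenomenon flagged in the paper's introduction and in the remark after Theorem~\ref{2genthm}. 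What rescues the situation is normality, not torsion: $\rE(G)$ is finite and normal in $G$, so $\rE(G)\cap S$ is a finite \emph{normal} subgroup of $S$, hence trivial by just-infiniteness; then Corollary~\ref{tatecor} (as packaged in Corollary~\ref{finlfcor}) gives $\rE(G)$ a normal $p$-complement, and since every component has order divisible by $p$, this forces $\rE(G)=1$, i.e. $\Emb(S)=\EmbLF(S)$, for every just infinite $S$ with no rank or torsion hypothesis at all.
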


In general, for a given finitely generated pro-$p$ group $S$, the question of whether $\Emb(S)$, $\EmbLF(S)$ or $\EmbS(S)$ is finite reduces to considering $p'$-embeddings of more restricted types (see Theorem~\ref{critthm}).  We also obtain several restrictions (Theorem~\ref{wreg}) on the structure of groups in $\Emb(S)$ in the case that $S$ is weakly regular, that is, $S$ does not have a quotient isomorphic to $C_p \wr C_p$.  This class of pro-$p$ groups includes for instance all nilpotent pro-$p$ groups of class less than $p$ and all powerful pro-$p$ groups.  It is not known if there are any finitely generated weakly regular pro-$p$ groups $S$ for which $\EmbLF(S)$ is infinite.

\section{Preliminaries}

We gather here some basic facts and definitions we will need about finite and profinite groups.

\begin{defn}Let $G$ be a profinite group.  Define $d(G)$ to be the size of the smallest subset $X$ of $G$ such that $G = \overline{\langle X \rangle}$.  Say $G$ is \emph{$n$-generated} if $d(G) \le n$.

Define $G'$ to be the closed commutator subgroup $\overline{[G,G]}$, and define $G^{(n)}$ inductively by $G^{(0)} = G$ and $G^{(n+1)} = (G^{(n)})'$.  Write $G^n$ for the smallest closed subgroup of $G$ containing all $n$-th powers in $G$.

Given a prime (or set of primes) $p$, the \emph{$p$-core} $\rO_p(G)$ is the largest normal pro-$p$ subgroup of $G$, and the \emph{$p$-residual} $\rO^p(G)$ is the smallest normal subgroup of $G$ such that $G/\rO^p(G)$ is a pro-$p$ group.\end{defn}

\begin{lem}\label{goodcomp}Let $G$ be a profinite group and let $\mcQ$ be a set of components of $G$.  Then $K = \overline{\langle \mcQ \rangle}$ is a central product of $\mcQ$ and no proper subset of $\mcQ$ suffices to generate $K$ topologically.  Every component of $G$ is contained in a finite normal subgroup of $G$.\end{lem}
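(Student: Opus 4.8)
The plan is to reduce everything to the single classical fact that distinct components of a \emph{finite} group commute, and then to control conjugacy by a compactness argument. First I would record the commuting relation in the profinite setting: if $Q_1 \ne Q_2$ are components of $G$, then $[Q_1,Q_2]=1$. Since each $Q_i$ is finite, for every open normal $N \trianglelefteq G$ avoiding the non-identity elements of $Q_1 \cup Q_2$ the images $Q_iN/N$ are components of the finite group $G/N$ isomorphic to $Q_i$; and because $\bigcap_N Q_2N = Q_2$ (as $Q_2$ is closed), once $N$ is small enough these images are distinct, whence they commute in $G/N$, i.e. $[Q_1,Q_2]\le N$. Intersecting over a cofinal family of such $N$ gives $[Q_1,Q_2]=1$. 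Here I use that $Q_1 \ne Q_2$ forces $Q_1 \not\le Q_2$ and $Q_2 \not\le Q_1$: if $Q_1 \le Q_2$ then $Q_1$ is subnormal in the quasisimple group $Q_2$, so its image in $Q_2/\rZ(Q_2)$ is a non-trivial subnormal subgroup of a simple group, hence everything, and perfectness of $Q_1$ then forces $Q_1 = Q_2$.

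Granting this, the first two assertions are immediate. Each $Q \in \mcQ$ is centralised by every other member of $\mcQ$, hence normalised by the dense subgroup $\langle\mcQ\rangle$ and so (being finite, hence closed) normal in $K=\overline{\langle\mcQ\rangle}$; as the members of $\mcQ$ pairwise commute and generate $K$, this exhibits $K$ as their central product, with $Q \cap \overline{\langle\mcQ\setminus\{Q\}\rangle}\le \rZ(Q)$. For minimality, if $\mcQ'\subsetneq\mcQ$ and $Q_0\in\mcQ\setminus\mcQ'$, then $Q_0$ commutes with $K'=\overline{\langle\mcQ'\rangle}$, so $Q_0\cap K'\le\rZ(Q_0)<Q_0$; thus $Q_0\not\le K'$ and $K'\ne K$.

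The substantial point is the last assertion, for which it suffices to prove that a component $Q$ has only finitely many $G$-conjugates: then $E=\overline{\langle Q^g : g\in G\rangle}$ is a central product of finitely many finite groups, hence finite, and it is normal because $G$ permutes the conjugates. Suppose for contradiction that $Q$ has infinitely many conjugates. By the commuting relation they pairwise commute, so, writing $P=\overline{\langle Q^g\rangle}$, the argument of the second paragraph applied to the set of all conjugates gives $\rZ(Q^g)\le\rZ(P)$ and
\[ P/\rZ(P)\;\cong\;\prod_{i\in I}\bar{Q}_i, \]
a direct product, with the product topology, of copies $\bar{Q}_i\cong Q/\rZ(Q)$ of a fixed non-abelian finite simple group indexed by the infinite set $I$ of conjugates. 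Since $P\trianglelefteq G$, the group $G$ acts continuously on $P/\rZ(P)$ by conjugation and permutes the simple factors $\bar{Q}_i$ transitively. Now fix a non-trivial element $x$ supported on the single factor corresponding to $Q$. The orbit $\{x^g : g\in G\}$ is the continuous image of the compact group $G$, hence compact and, as $P/\rZ(P)$ is Hausdorff, closed. By transitivity this orbit contains, for every $i\in I$, an element supported on the single factor $\bar{Q}_i$; since any basic neighbourhood of the identity in the product topology constrains only finitely many coordinates, these elements accumulate at the identity, which therefore lies in the closed orbit. This forces $x^g=1$ for some $g$, hence $x=1$, a contradiction. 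So $I$ is finite.

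The main obstacle is precisely this finiteness of the conjugacy class. The normaliser $\rN_G(Q)$ is easily seen to be closed, but closedness alone does not give finite index, and the naive attempt to push openness of $\rC_{Q_i}(Q)$ up a subnormal series fails because intermediate terms need not be open in $G$. The compactness argument above is what genuinely excludes an infinite orbit of pairwise-commuting conjugates, reflecting the fact that a compact group cannot act continuously on a discrete set with an infinite orbit.
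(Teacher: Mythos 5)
Your proof is correct, but there is nothing in the paper to compare it with line-by-line: the paper does not prove this lemma at all, it simply cites \cite{ReiF}, Proposition 2.8. What your argument buys is self-containedness, and its two main devices are both sound: the reduction of the commuting statement to the classical fact that distinct components of a \emph{finite} group commute, via finite quotients $G/N$ with $N$ avoiding the non-identity elements of $Q_1 \cup Q_2$ and separating the two images; and the compactness/accumulation argument, which correctly exploits the fact that in a product topology a neighbourhood of the identity constrains only finitely many coordinates, so an infinite family of pairwise-commuting conjugates would force the identity into the closed orbit of a non-trivial element. One step is stated more casually than it deserves: the identification $P/\rZ(P) \cong \prod_{i \in I} \bar{Q}_i$ \emph{as a topological group with the product topology} does not follow verbatim from ``the argument of the second paragraph'' (which only yields the central product structure). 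It needs the extra observation that $P = Q_i \rC_P(Q_i)$ for each $i$ (the right-hand side is closed and contains every generator), giving continuous projections $\pi_i \colon P \to P/\rC_P(Q_i) \cong Q_i/\rZ(Q_i)$; the combined map $\pi \colon P \to \prod_{i \in I} \bar{Q}_i$ then has kernel $\bigcap_i \rC_P(Q_i) = \rC_P(P) = \rZ(P)$ and compact image containing the dense restricted product, hence is surjective. With that inserted, your compactness argument goes through exactly as written. Two cosmetic points: in showing that $Q_1 \le Q_2$ forces $Q_1 = Q_2$ you need perfectness of $Q_2$ as well as of $Q_1$ (from $Q_2 = Q_1\rZ(Q_2)$ one gets $Q_2' = Q_1' = Q_1$, and then $Q_2 = Q_2'$); and you should record that distinct conjugates have distinct images in $P/\rZ(P)$ (if $Q_j \le Q_i\rZ(P)$ with $j \ne i$, then the image of $Q_j$ would centralise itself and hence be abelian, contradicting simplicity), so the index set of the direct product really is the infinite set $I$.
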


\begin{proof}See \cite{ReiF} Proposition 2.8.\end{proof}

\begin{lem}\label{cinvlem}Let $P$ be a finitely generated pro-$p$ group and let $G = P \rtimes H$ be a profinite group such that $\rC_H(P)=1$.
\begin{enumerate}[(i)]
\item Suppose there is an $H$-invariant series
\[ P = P_1 \ge P_2 \ge \dots \]
of normal subgroups of $P$, such that $\bigcap P_i = 1$, and such that $[P_i,H] \le P_{i+1}$ for each $i$.  Then $H$ is a pro-$p$ group.
\item Define the characteristic series $P_i$ by $P_1 = P$, and thereafter $P_{i+1} = [P,P_i]P^p_i$.  Suppose $H$ acts trivially on $P/\Phi(P)$.  Then $H$ acts trivially on $P_i/P_{i+1}$ for all $i$.  In particular, $H$ is a pro-$p$ group.
\item Suppose $P$ is finite and abelian, and $H$ is a $p'$-group.  Then $P = [P,H] \times \rC_P(H)$.
\end{enumerate}
\end{lem}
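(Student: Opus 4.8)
The plan is to treat the three parts in turn, with (ii) resting on (i) together with a commutator computation, and with (i) and (iii) both drawing on standard coprime-action facts; the main obstacle will be the stability-group principle underlying (i).

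For (i) I would first reduce to finite quotients of $P$. The $H$-invariant open normal subgroups of $P$ form a base of neighbourhoods of the identity: given an open normal $N \triangleleft P$, its finitely many $H$-conjugates intersect in an $H$-invariant open normal subgroup, since $H$ acts on the finite group $P/N$ through the finite group $\Aut(P/N)$. Moreover $\rC_H(P) = \bigcap_N \rC_H(P/N)$, so the hypothesis $\rC_H(P)=1$ makes $H \to \prod_N H/\rC_H(P/N)$ injective; it therefore suffices to show each finite group $H/\rC_H(P/N)$, i.e.\ the image of $H$ in $\Aut(P/N)$, is a $p$-group. Fix such an $N$ and put $\bar P = P/N$. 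Since $\bigcap_i P_i = 1 \subseteq N$ and $N$ is open, a compactness argument gives $P_i \le N$ for large $i$, so the images $\bar P_i = P_iN/N$ form a \emph{finite} chain $\bar P = \bar P_1 \ge \bar P_2 \ge \cdots \ge 1$ that $H$ stabilises, acting trivially on each factor (from $[P_i,H]\le P_{i+1}$). It then remains to invoke the stability-group principle for finite $p$-groups: the image of $H$ in $\Aut(\bar P)$ is a $p$-group. I would prove this by showing the image has no element of prime order $\ell\neq p$; such an element would generate a group $A$ acting coprimely on $\bar P$ and trivially on each factor of the chain, and by the standard coprime identity $[\bar P,A]=[\bar P,A,A]$ together with induction on the chain length, any coprime stabiliser acts trivially, a contradiction. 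This argument tolerates non-abelian chain factors, which matters because the given chain cannot be freely refined without destroying triviality of the action.

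For (ii) the series $P_i$ is the lower $p$-central (Frattini) series, so $P_2=\Phi(P)$ and the hypothesis is exactly trivial action on $P/P_2$. I would prove by induction on $i$ that $H$ acts trivially on $P_i/P_{i+1}$, the base case being the hypothesis. For the inductive step I would use that $P_{i+1}/P_{i+2}$ is generated by the images of commutators $[x,y]$ with $x\in P,\ y\in P_i$ and of $p$-th powers $y^p$ with $y\in P_i$, together with the standard containments $[P_j,P_k]\le P_{j+k}$ and $P_j^p\le P_{j+1}$ for this series. Writing $x^h=xu$ with $u\in P_2$ and $y^h=yv$ with $v\in P_{i+1}$ (using the base case and the inductive hypothesis), a routine expansion of $[xu,yv]$ and of $(yv)^p$ via commutator identities shows every correction term lies in $P_{i+2}$, so $h$ fixes these generators modulo $P_{i+2}$; this completes the induction. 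Since $P$ is finitely generated the $P_i$ are open with $\bigcap_i P_i=1$, so $H$ stabilises a series of the type demanded by (i), and (i) then gives that $H$ is pro-$p$.

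For (iii) this is the classical coprime decomposition for an abelian group. First, $H$ is finite, since it embeds in the finite group $\Aut(P)$. As $(|H|,|P|)=1$ and $P$ is abelian, multiplication by $|H|$ is an automorphism of $P$, so I would form the idempotent endomorphism $e(x)=\bigl(\prod_{h\in H}x^h\bigr)^{1/|H|}$, where $1/|H|$ denotes the inverse of $|H|$ modulo $\exp(P)$; this $e$ is an $H$-equivariant projection with image $\rC_P(H)$ and kernel $[P,H]$, giving $P=\rC_P(H)\times[P,H]$. Equivalently one may cite Maschke's theorem to split the $\bZ[H]$-module $P$. Of the three parts, everything outside the stability-group principle of (i) is either a bookkeeping reduction or a standard coprime-action computation, so that principle is where the real work sits.
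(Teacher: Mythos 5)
Your proof is correct, but it is worth knowing that the paper does not actually prove this lemma: its proof consists entirely of citations, namely \cite{Lee}, Exercise 2.1(2) for parts (i) and (ii) (with the remark that the generalisation to profinite groups is immediate) and \cite{Asc}, Proposition 24.6 for part (iii). What you have written is in substance a full expansion of those citations. Your reduction to finite quotients is exactly the ``immediate'' profinite generalisation the paper alludes to; your coprime-stabiliser argument (no element of prime order $\ell \neq p$, via $[\bar P,A]=[\bar P,A,A]$ and induction on the chain) is the classical stability-group principle underlying the cited exercise; your commutator estimates modulo $P_{i+2}$ are the content of that exercise for the lower $p$-central series; and your averaging idempotent $e$ is the standard proof of the coprime splitting cited from Aschbacher. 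So the two routes agree mathematically: yours buys self-containedness at the cost of length, while the paper buys brevity by outsourcing standard finite group theory to the literature.

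One small repair is needed in your reduction for part (i). You justify the claim that an open normal subgroup $N$ of $P$ has only finitely many $H$-conjugates by saying that $H$ acts on the finite group $P/N$ through $\Aut(P/N)$; this is circular, since $H$ has no induced action on $P/N$ until $N$ is known to be $H$-invariant. The correct justification is that each conjugate $N^h$ is an open subgroup of $P$ of the same index as $N$, and a finitely generated profinite group has only finitely many open subgroups of any given finite index (the same fact the paper invokes to deduce Corollary~\ref{embsizecor}); hence the intersection of the $H$-conjugates of $N$ is open and $H$-invariant. Alternatively, intersect all open subgroups of $P$ of index at most $|P:N|$ to obtain an open characteristic subgroup of $P$ contained in $N$. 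With that patched, the remaining steps --- the compactness argument giving $P_i \le N$ for large $i$, the Cauchy-plus-coprime-action contradiction, the generation of $P_{i+1}/P_{i+2}$ by commutators and $p$-th powers together with the standard containments for the lower $p$-central series, and the identification of the image and kernel of $e$ with $\rC_P(H)$ and $[P,H]$ --- are all sound.
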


\begin{proof}For parts (i) and (ii) see \cite{Lee} Exercise 2.1 (2); the generalisation to profinite groups is immediate.  For part (iii) see \cite{Asc} Proposition 24.6.\end{proof}

\begin{lem}\label{gfitlem}Let $G$ be a profinite group that is virtually pronilpotent.  Then ${\rC_G( \rF^*(G)) = \rZ(\rF(G))}$.\end{lem}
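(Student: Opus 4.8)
The plan is to prove the equality $\rC_G(\rF^*(G)) = \rZ(\rF(G))$ by establishing the two inclusions separately, after first isolating two auxiliary facts that carry most of the weight: that $\rF(G)$ and $\rE(G)$ commute elementwise, i.e. $[\rE(G),\rF(G)] = 1$, and that consequently $\rZ(\rF^*(G)) = \rZ(\rF(G))$. Both of these reduce the non-trivial part of the statement to the single containment $\rC_G(\rF^*(G)) \le \rF^*(G)$.

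First I would prove $[\rE(G),\rF(G)] = 1$. Since $\rE(G)$ is topologically generated by the components of $G$, and centralising a fixed closed subgroup is a closed condition, it suffices to show that each component $Q$ centralises $\rF(G)$. By Lemma~\ref{goodcomp}, $Q$ is finite and lies in a finite normal subgroup, so its normal closure $N_Q = \overline{\langle Q^G\rangle}$ is a finite normal subgroup which is a central product of components; in particular $N_Q$ is perfect and its normal nilpotent subgroups are central. As $N_Q$ and $\rF(G)$ are both normal, $[N_Q,\rF(G)] \le N_Q \cap \rF(G) \le \rZ(N_Q)$, and the three subgroups lemma then gives $[N_Q,N_Q,\rF(G)] = 1$; since $N_Q$ is perfect this yields $[N_Q,\rF(G)] = 1$, hence $[Q,\rF(G)] = 1$. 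This is the classical finite argument carried out inside the finite group $N_Q$. Granting this, the inclusion $\rZ(\rF(G)) \le \rC_G(\rF^*(G))$ is immediate, since $\rZ(\rF(G))$ centralises $\rF(G)$ by definition and centralises $\rE(G)$ because it lies in $\rF(G)$; the same computation gives the identity $\rZ(\rF^*(G)) = \rZ(\rF(G))$, as the containment $\rZ(\rF(G)) \le \rZ(\rF^*(G))$ is what we have just shown, while conversely $\rZ(\rF^*(G))$ is an abelian normal subgroup of $G$, hence pronilpotent and so contained in $\rF(G)$, and as it centralises $\rF(G)$ it lies in $\rZ(\rF(G))$.

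For the reverse inclusion the real content is the self-centralising property $\rC_G(\rF^*(G)) \le \rF^*(G)$ of the generalised pro-Fitting subgroup of a virtually pronilpotent group, recorded in the introduction (see \cite{ReiF}). Once this is in hand the argument finishes in one line: writing $C = \rC_G(\rF^*(G))$, we have $C = C \cap \rF^*(G) = \rZ(\rF^*(G)) = \rZ(\rF(G))$, where the middle equality is the tautology that an element of $\rF^*(G)$ centralising $\rF^*(G)$ is precisely an element of $\rZ(\rF^*(G))$, and the last equality is the identity proved above.

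The main obstacle is exactly this containment $\rC_G(\rF^*(G)) \le \rF^*(G)$. If one prefers not to quote \cite{ReiF}, the natural route is to set $C = \rC_G(\rF^*(G))$, note that $C$ is normal in $G$, that $\rF(C) \le \rF(G)$ and that the components of the normal subgroup $C$ are components of $G$, so $\rF^*(C) \le \rF^*(G)$ and is therefore centralised by $C$; this forces $\rF^*(C) \le \rZ(C)$, whence $\rE(C) = 1$ (a perfect central subgroup is trivial) and $\rF(C) = \rZ(C)$, so that $C$ contains a central open pronilpotent subgroup and is center-by-finite. One then wants to reduce to the classical finite theorem $\rC_H(\rF^*(H)) = \rZ(\rF(H))$ by passing to the cofinal family of finite quotients $C/L$ with $L$ an open normal subgroup of $\rF(C)$. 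The delicate point—and precisely the reason the finite theorem itself is proved by induction on the group order—is that $\rF^*$ does not commute with arbitrary quotients: passing to $C/L$ can enlarge the Fitting subgroup, so the hypothesis that $C$ centralises $\rF(C)$ need not descend to the statement that $C/L$ centralises $\rF(C/L)$. Controlling this requires either showing that components and the Fitting subgroup are preserved along a suitably chosen cofinal system, combined with an induction on the chief factors of the finite quotient $G/\rF(G)$, or, more cleanly, importing the self-centralising property directly from \cite{ReiF}.
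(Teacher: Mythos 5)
Your proposal is correct, but it is organised differently from the paper's proof, which is a single citation: the paper invokes \cite{ReiF}, Theorem 1.7, which already gives the full equality $\rC_G(\rF^*(G)) = \rZ(\rF(G))$ for virtually pronilpotent groups. You instead import from \cite{ReiF} only the weaker containment $\rC_G(\rF^*(G)) \le \rF^*(G)$ (which is indeed the form quoted in this paper's introduction) and then derive the equality by elementary means, and all of those steps check out: the normal closure of a component is a finite perfect central product of components by Lemma~\ref{goodcomp}, its intersection with $\rF(G)$ is a normal nilpotent (hence central) subgroup, and the three subgroups lemma plus perfectness gives $[\rE(G),\rF(G)]=1$; from this $\rZ(\rF^*(G)) = \rZ(\rF(G))$ follows because $\rZ(\rF^*(G))$ is an abelian normal, hence pronilpotent, subgroup of $G$ and so lies in $\rF(G)$; and then $\rC_G(\rF^*(G)) = \rC_G(\rF^*(G)) \cap \rF^*(G) = \rZ(\rF^*(G))$ closes the argument. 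What your route buys is transparency about the logical dependence: it isolates exactly which hard fact must be taken from the external reference and shows that everything else is routine; what the paper's route buys is brevity, since the cited theorem is already stated in the desired form. You are also right, and commendably explicit, that the containment itself is the genuine content — your remark that a naive inverse-limit reduction to the finite theorem founders because $\rF^*$ does not commute with quotients correctly identifies why that part cannot be dispatched in a few lines, and deferring it to \cite{ReiF} puts your proof on the same footing as the paper's.
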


\begin{proof}This is a special case of \cite{ReiF}, Theorem 1.7.\end{proof}

\begin{cor}\label{lflem}Let $S$ be a finitely generated pro-$p$ group, let $G \in \EmbLF(S)$ and let ${P = \rO_p(G)}$.  Then $G/P$ acts faithfully on $P/\Phi(P)$.  As a result, we have $H \in \EmbLF(S)$ for all closed subgroups $H$ of $G$ containing $S$.\end{cor}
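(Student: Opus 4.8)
The plan is to first pin down $\rF^*(G)$ and its centraliser, and then deduce everything else from the equality $\rC_G(P)=\rZ(P)$. By Lemma~\ref{tatefin} the group $G$ is virtually pro-$p$, hence virtually pronilpotent, so Lemma~\ref{gfitlem} applies. Since $G\in\EmbLF(S)$ we have $\rE(G)=1$, so $\rF^*(G)=\rF(G)$. The pro-Fitting subgroup is pronilpotent, so it is the direct product of its Sylow subgroups; the product of the $\ell$-parts for $\ell\neq p$ is characteristic in $\rF(G)$, hence a normal pro-$p'$ subgroup of $G$, hence trivial because $\rO_{p'}(G)=1$. Thus $\rF(G)$ is a pro-$p$ group, giving $\rF(G)\le\rO_p(G)=P$; the reverse inclusion is automatic, so $\rF^*(G)=\rF(G)=P$ and Lemma~\ref{gfitlem} yields $\rC_G(P)=\rZ(P)\le P$.

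Next I would show that the kernel $N$ of the conjugation action of $G$ on $P/\Phi(P)$ is exactly $P$; since $[P,P]\le\Phi(P)$ means $P$ acts trivially on its own Frattini quotient, we have $P\le N\trianglelefteq G$, and $N=P$ is precisely the assertion that $G/P$ acts faithfully. Note that $\rC_N(P)=\rC_G(P)\cap N=\rZ(P)$. The obstacle here is that the extension of $N/\rZ(P)$ by $\rZ(P)$ need not split, so Lemma~\ref{cinvlem}(ii) cannot be applied to $N$ directly. I would instead apply it to the split group $P\rtimes A$, where $A$ is the closed subgroup of the profinite group $\Aut(P)$ consisting of the automorphisms acting trivially on $P/\Phi(P)$: here $\rC_A(P)=1$ and $A$ acts trivially on $P/\Phi(P)$, so Lemma~\ref{cinvlem}(ii) gives that $A$ is a pro-$p$ group. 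Conjugation embeds $N/\rZ(P)=N/\rC_N(P)$ into $A$, so $N/\rZ(P)$ is pro-$p$, whence $N$ is pro-$p$ and therefore $N\le\rO_p(G)=P$, as required.

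For the second assertion, fix a closed subgroup $H$ with $S\le H\le G$. Then $S$ is a maximal pro-$p$ subgroup of $H$ (any pro-$p$ subgroup of $H$ containing $S$ is one in $G$, hence equals $S$), so $S$ is a $p$-Sylow subgroup of $H$; moreover $P=\rO_p(G)\le S$ is normal in $H$, with $\rC_H(P)=\rC_G(P)\cap H=\rZ(P)\le P$. To see $\rO_{p'}(H)=1$, write $R:=\rO_{p'}(H)$ and observe that $R$ and $P$ are both normal in $H$ with $P\cap R=1$ (a pro-$p$ group meets a pro-$p'$ group trivially), so $[P,R]\le P\cap R=1$; hence $R\le\rC_H(P)=\rZ(P)\le P$, forcing $R=1$. (Equivalently, $R$ then acts trivially on $P/\Phi(P)$ and embeds into $H/P$, which acts faithfully by the first part.)

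The real work is showing $\rE(H)=1$, and this is the step I expect to be the main obstacle, because a component of $H$ is only subnormal in $H$ and so does not obviously interact with the normal subgroup $P$. Suppose for contradiction that $Q$ is a component of $H$. By Lemma~\ref{goodcomp} it lies in a finite normal subgroup of $H$, so its normal closure $L=\overline{\langle Q^H\rangle}$ is finite and normal in $H$, and is a central product of components by Lemma~\ref{goodcomp}; in particular $L$ is perfect and $P\cap L$, being a normal $p$-subgroup of $L$, is central in $L$. Since $P$ and $L$ are both normal in $H$ we get $[P,L]\le P\cap L\le\rZ(L)$, hence $[[P,L],L]\le[\rZ(L),L]=1$; by the three subgroups lemma (applied to $P$, $L$, $L$) together with $L=[L,L]$ this forces $[L,P]=[[L,L],P]=1$. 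Therefore $L\le\rC_H(P)=\rZ(P)\le P$, so $L$ is a non-trivial perfect pro-$p$ group, which is impossible. Hence $H$ has no components, $\rE(H)=1$, and $H\in\EmbLF(S)$.
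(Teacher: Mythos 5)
Your proof is correct and takes essentially the same route as the paper: Lemma~\ref{gfitlem} gives $\rC_G(P)=\rZ(P)\le P$, Lemma~\ref{cinvlem}(ii) gives faithfulness of $G/P$ on $P/\Phi(P)$, and for $S\le H\le G$ the containment $\rC_H(P)\le P$ is what kills both $\rO_{p'}(H)$ and $\rE(H)$. The only difference is that you make explicit two details the paper leaves implicit — routing the application of Lemma~\ref{cinvlem}(ii) through the split group $P\rtimes A$ with $A\le\Aut(P)$, and proving via the three subgroups lemma that components of $H$ centralise $P$, where the paper instead appeals to $\rC_H(\rO_p(H))\le\rC_H(P)\le P$ together with the standard fact that the layer centralises normal pro-$p$ subgroups.
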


\begin{proof}By Lemma~\ref{gfitlem}, we have $\rC_G(P) \le P$.  By Lemma~\ref{cinvlem}, the section ${\rC_G(P/\Phi(P))/\rC_G(P)}$ is a pro-$p$ group, so $\rC_G(P/\Phi(P))$ is a pro-$p$ group; since $\Phi(P) \ge P'$, we have $P \le C_G(P/\Phi(P))$.  But $P$ is the largest normal pro-$p$ subgroup of $G$, so in fact $P = \rC_G(P/\Phi(P))$.

Now let $H$ be a subgroup of $G$ containing $S$.  Clearly $S$ is a $p$-Sylow subgroup of $H$.  We have $\rC_H(\rO_p(H)) \le \rC_H(P) \le P$, since $P$ is a normal pro-$p$ subgroup of $H$.  This ensures that $\rE(H)$ and $\rO_{p'}(H)$ are both trivial.  Thus $H \in \EmbLF(S)$.\end{proof}

\begin{defn}Let $P$ be a finite $p$-group.  A characteristic subgroup $K$ of $P$ is \emph{critical} if $[P,K]\Phi(K) \le \rZ(K)$ and $\rC_P(K) = \rZ(K)$.\end{defn}

\begin{thm}[Thompson, \cite{Fei} Chapter II, Lemma 8.2]\label{thomcrit}Let $P$ be a finite $p$-group.  Then $P$ has a critical subgroup.  If $K$ is a critical subgroup of $P$, then the kernel of the induced homomorphism $\Aut(P) \rightarrow \Aut(K)$ is a $p$-group.\end{thm}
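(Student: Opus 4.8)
The plan is to prove the two assertions separately. For existence, I would take $K$ to be a characteristic subgroup of $P$ of maximal order subject to the single condition $[P,K]\Phi(K) \le \rZ(K)$; such subgroups exist because $\rZ(P)$ satisfies it. I then claim this $K$ is critical, the only remaining point being $\rC_P(K) = \rZ(K)$ (the inclusion $\rZ(K) \le \rC_P(K)$ is automatic, and $\rZ(K) = K \cap \rC_P(K)$, so the content is $\rC_P(K) \le K$). Suppose not, so $Q := \rC_P(K)$ is characteristic with $\rZ(K) < Q$. Passing to $\bar P = P/\rZ(K)$, the image $\bar Q$ is a non-trivial characteristic subgroup and hence meets $\rZ(\bar P)$ non-trivially; I would set $\bar L = \Omega_1(\rZ(\bar P)) \cap \bar Q$, which is non-trivial, elementary abelian, characteristic in $\bar P$ and central in $\bar P$, and take $L$ to be its preimage. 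Then $L$ is characteristic in $P$ with $\rZ(K) < L \le \rC_P(K)$, $[P,L] \le \rZ(K)$ and $\Phi(L) \le \rZ(K)$.

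The candidate larger subgroup is $K_1 = KL$. Since $L \le \rC_P(K)$ the two factors commute and $K \cap L = \rZ(K)$, so $|K_1| > |K|$, while $\rZ(K) \le \rZ(K_1)$ because $\rZ(K)$ centralises both $K$ and $L$. It then remains to verify $[P,K_1]\Phi(K_1) \le \rZ(K_1)$, which I expect to follow from $[P,K_1] \le [P,K][P,L] \le \rZ(K)$ together with $\Phi(K_1) = \Phi(K)\Phi(L) \le \rZ(K)$ (using $[K,L]=1$ to split the commutator and Frattini subgroups of the central product). This contradicts maximality of $K$, proving $\rC_P(K)=\rZ(K)$. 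This enlargement step, and in particular producing a genuinely larger characteristic member of the family, is the main obstacle for this half; the delicate point is locating $L$ inside $\rC_P(K)$ while keeping it both characteristic and central in $P$, which is exactly what the construction $\Omega_1(\rZ(\bar P)) \cap \bar Q$ achieves.

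For the second assertion, let $A \le \Aut(P)$ be the kernel of restriction to $K$ (the automorphisms fixing $K$ pointwise; here only $\rC_P(K)=\rZ(K)$ and $K$ characteristic are used, not the other critical conditions). Since $\Aut(P)$ is finite, by Cauchy it suffices to show $A$ contains no non-trivial element of order prime to $p$. So let $\alpha \in A$ have order coprime to $p$ and set $B = \langle \alpha\rangle$, a group acting coprimely on the $p$-group $P$. The key observation is that for each $x \in P$ the inner automorphisms of $K$ induced by $x$ and by $\alpha(x)$ coincide: since $K \lhd P$ and $\alpha$ fixes $K$ pointwise, $\alpha(x)k\alpha(x)^{-1} = \alpha(xkx^{-1}) = xkx^{-1}$ for all $k \in K$, whence $x^{-1}\alpha(x) \in \rC_P(K) = \rZ(K)$. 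As $\rZ(K) \lhd P$ this gives $[P,B] \le \rZ(K) \le K$, and because $\alpha$ fixes $K$ pointwise we get $[P,B,B] = 1$. The standard coprime-action identity $[P,B] = [P,B,B]$ then forces $[P,B]=1$, so $\alpha = \id$. The only non-elementary ingredient here is that coprime-action identity, which I would simply cite; granting it, this half is short.
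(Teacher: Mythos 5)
The paper itself gives no proof of this statement: it is quoted as Thompson's theorem with a citation to Feit--Thompson (Chapter II, Lemma 8.2), so there is no internal argument to compare against. Your proof is correct, and it is essentially the classical argument from that source (also Gorenstein, \emph{Finite Groups}, Theorem 5.3.11): choose $K$ of maximal order among characteristic subgroups with $[P,K]\Phi(K) \le \rZ(K)$, and if $\rC_P(K) > \rZ(K)$, enlarge $K$ to the central product $K_1 = KL$, where $L$ is the preimage of $\Omega_1(\rZ(\bar P)) \cap \bar Q$; then for the $p'$-automorphism statement, show $[P,\langle\alpha\rangle] \le \rC_P(K) = \rZ(K)$ and finish by a coprime-action/stabilised-series argument. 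All the verifications you defer (that $K \cap L = \rZ(K)$, that $[P,K_1]$ and $\Phi(K_1)$ land in $\rZ(K) \le \rZ(K_1)$, and the splitting of commutators and $p$-th powers across the central product) do go through. One small imprecision: the image $\bar Q$ of $\rC_P(K)$ in $\bar P = P/\rZ(K)$ need not literally be characteristic in $\bar P$, since automorphisms of $\bar P$ need not lift to $P$; what you actually need, and what holds, is that $\bar Q$, $\rZ(\bar P)$ and $\Omega_1$ are preserved by every automorphism of $\bar P$ \emph{induced} from $\Aut(P)$, which suffices to make the preimage $L$ characteristic in $P$. Also, instead of citing the coprime identity $[P,B,B]=[P,B]$, you could finish with the paper's own Lemma~\ref{cinvlem}(i) applied to the series $P \ge \rZ(K) \ge 1$: it shows directly that the kernel is a pro-$p$ (here, $p$-) group.
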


\section{Control of $p$-transfer in profinite groups}\label{ptransprelim}

An important notion in finite group theory is the \emph{transfer map}, which is a homomorphism that is defined from a finite group to any of its abelian sections.   We will not be using the transfer map directly, but we will be using the closely related notion of control of transfer, and more precisely control of $p$-transfer.  Control of transfer is a concept that behaves well in the class of profinite groups; see for instance \cite{GRS}.  (Note however that our definition of which subgroup controls transfer is slightly different to that used in \cite{GRS}.)

\begin{defn}Let $G$ be a profinite group, let $H$ be a subgroup, and let $H \leq K \leq G$.  Say $K$ \emph{controls transfer} from $G$ to $H$ if $G' \cap H = K' \cap H$.  In the special case that $H$ is a $p$-Sylow subgroup of $G$, then say $K$ \emph{controls $p$-transfer} in $G$.  There is a potential ambiguity in saying that $K$ controls $p$-transfer in $G$ without specifying the Sylow subgroup, but since all Sylow subgroups of $G$ contained in $K$ are conjugate in $K$, the choice of Sylow subgroup is immaterial in practice.\end{defn}

The theorem below is an interpretation essentially due to Gagola and Isaacs (\cite{Gag}) of a theorem of Tate (\cite{Tat}).  Both \cite{Tat} and \cite{Gag} state the result for finite groups, but the generalisation to profinite groups is immediate.

\begin{thm}[Tate]\label{tate}Let $G$ be a (pro-)finite group, let $S$ be a $p$-Sylow subgroup of $G$, and let $S \leq K \leq G$.  The following are equivalent:
\begin{enumerate}[(i)]
\item $G' \cap S = K' \cap S$;
\item $(G'G^p) \cap S = (K'K^p) \cap S$;
\item $(G'\rO^p(G)) \cap S = (K'\rO^p(K)) \cap S$;
\item $\rO^p(G) \cap S = \rO^p(K) \cap S$.
\end{enumerate}
\end{thm}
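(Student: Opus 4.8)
The plan is to pass to the maximal pro-$p$ quotients and to translate all four conditions into a single statement about the kernel of a canonical map between them. Write $\overline{G} = G/\rO^p(G)$ and $\overline{K} = K/\rO^p(K)$; both are pro-$p$ groups. Since $\overline{G}$ is pro-$p$, the image of $S$ in $\overline{G}$ is a $p$-Sylow subgroup of a pro-$p$ group, hence all of $\overline{G}$, so $G = S\,\rO^p(G)$; likewise $K = S\,\rO^p(K)$ and $G = K\,\rO^p(G)$. From $K/(K\cap\rO^p(G)) \cong G/\rO^p(G)$ being pro-$p$ we get $\rO^p(K)\le\rO^p(G)$, so the inclusion $K\hookrightarrow G$ induces a surjection $\theta\colon\overline{K}\twoheadrightarrow\overline{G}$, and the quotient maps $S\twoheadrightarrow\overline{K}$ and $S\twoheadrightarrow\overline{G}$ are compatible via $\theta$. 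In each of (i)--(iv) the left-hand subgroup contains the right-hand one for trivial reasons ($K'\le G'$, $K^p\le G^p$, $\rO^p(K)\le\rO^p(G)$), so in every case only the reverse inclusion is at issue.

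First I would record the dictionary between the conditions and $\ker\theta$. Writing $\pi\colon S\twoheadrightarrow\overline{K}$ for the quotient map, one has $S\cap\rO^p(K)=\ker\pi$ and $S\cap\rO^p(G)=\pi^{-1}(\ker\theta)$, so (iv) is equivalent to $\ker\theta=1$. Since $\rO^p(G)\le G'G^p$ (as $G/G'G^p$ is an elementary abelian $p$-group) one has $(G'G^p)\rO^p(G)/\rO^p(G)=\Phi(\overline{G})$ and $G'\rO^p(G)/\rO^p(G)=\overline{G}'$, and similarly for $K$; because $\theta$ is surjective it carries $\Phi(\overline{K})$ onto $\Phi(\overline{G})$ and $\overline{K}'$ onto $\overline{G}'$, whence $\theta^{-1}(\Phi(\overline{G}))=\Phi(\overline{K})\ker\theta$ and $\theta^{-1}(\overline{G}')=\overline{K}'\ker\theta$. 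Pulling these back through $\pi$ shows that (ii) is equivalent to $\ker\theta\le\Phi(\overline{K})$ and (iii) to $\ker\theta\le\overline{K}'$. As $1\le\overline{K}'\le\Phi(\overline{K})$, this yields the easy implications (iv)$\Rightarrow$(iii)$\Rightarrow$(ii) at once.

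Next I would dispose of (i) by showing $G'\cap S=(G'\rO^p(G))\cap S$, which identifies (i) with (iii). For this, note that the image of $G'\rO^p(G)$ in the profinite abelian group $G/G'$ is exactly its pro-$p'$ part (the maximal pro-$p$ quotient of $G/G'$ being $G/G'\rO^p(G)$), whereas the image of $G'\rO^p(G)\cap S$ there is pro-$p$, being a quotient of a subgroup of $S$; a pro-$p$ subgroup of a pro-$p'$ group is trivial, so $G'\rO^p(G)\cap S\le G'$, and the reverse containment is clear. The same argument applied to $K$ gives (i)$\Leftrightarrow$(iii).

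It then remains to prove (ii)$\Rightarrow$(iv), that is, $\ker\theta\le\Phi(\overline{K})\Rightarrow\ker\theta=1$; equivalently, that in this situation a surjection of pro-$p$ groups inducing an isomorphism on maximal elementary abelian $p$-quotients must be an isomorphism. This is the whole content of the theorem and the one genuinely hard step: the implication is false for abstract surjections of pro-$p$ groups (consider $C_{p^2}\twoheadrightarrow C_p$), so it must exploit that $\overline{K}$ and $\overline{G}$ arise as maximal $p$-quotients of $K\le G$ sharing the Sylow subgroup $S$. This is precisely Tate's theorem in cohomological form: condition (ii) says the restriction $\rH^1(G,\bF_p)\to\rH^1(K,\bF_p)$ is an isomorphism, and Tate's conclusion is that $\rO^p(G)\cap K=\rO^p(K)$. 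Here I would reduce to finite quotients by writing $G$ as an inverse limit and invoke the finite statement of \cite{Tat} in the form given by \cite{Gag}. This last step is the main obstacle; everything preceding it is formal manipulation of pro-$p$ quotients.
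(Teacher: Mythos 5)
Your formal reductions are correct, and they actually supply more detail than the paper does: the paper gives no proof of this theorem at all, stating only that it is an interpretation due to Gagola and Isaacs \cite{Gag} of Tate's theorem \cite{Tat}, both stated for finite groups, ``but the generalisation to profinite groups is immediate.'' Your dictionary --- $G = S\,\rO^p(G)$, $\rO^p(K)\le\rO^p(G)$, the surjection $\theta\colon K/\rO^p(K)\twoheadrightarrow G/\rO^p(G)$, the identifications of (iv), (iii), (ii) with $\ker\theta=1$, $\ker\theta\le \overline{K}{}'$, $\ker\theta\le\Phi(\overline{K})$ respectively, and the observation $G'\cap S=(G'\rO^p(G))\cap S$ giving (i)$\Leftrightarrow$(iii) --- is sound, and it correctly isolates (ii)$\Rightarrow$(iv) as the only implication with real content. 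To that extent your treatment is compatible with, and sharper than, the paper's.

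The gap is in your proposed mechanism for that last implication. Passing to finite quotients termwise does not work: condition (ii) for the finite triple $(G/N,\,KN/N,\,SN/N)$ unravels (via Dedekind's law) to $S\cap G'G^pN\le K'K^pN$, and this does \emph{not} follow from $S\cap G'G^p=S\cap K'K^p$, because intersections with $S$ do not commute with passing to quotients: $S\cap XN\neq (S\cap X)N$ in general. The same obstruction blocks the contrapositive direction (witnesses to failure of (ii) in $G/N$ lie only in $S\cap G'G^pN$, and can escape to the boundary as $N$ shrinks, so no limit point in $S\cap G'G^p\setminus K'K^p$ is produced). So the profinite statement is not a formal inverse limit of the finite statements, and ``invoke the finite case'' is precisely the step that fails as described. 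The robust reading of the paper's ``immediate'' is that the \emph{proofs}, not the statements, generalise: you have already observed that (ii) says restriction $\rH^1(G,\bF_p)\to\rH^1(K,\bF_p)$ is an isomorphism, and Tate's original cohomological argument --- injectivity of restriction to a Sylow subgroup on mod $p$ continuous cohomology (via res--cor and a limit over open subgroups), plus the criterion that a surjection of pro-$p$ groups is an isomorphism if it is an isomorphism on $\rH^1$ and injective on $\rH^2$ --- is valid verbatim for profinite groups; alternatively, the transfer-theoretic proof of \cite{Gag} goes through using profinite transfer as developed in \cite{GRS}. Replacing your inverse-limit step by either of these closes the argument; everything else in your proposal can stand as written.
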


From now on, the statement `$K$ controls $p$-transfer in $G$' will be taken to mean any of the four equations above interchangeably.

In a profinite group $G$, a \emph{normal $p$-complement} is a (necessarily unique) normal subgroup $N$ such that $G = SN$ and $S \cap N = 1$, where $S$ is a $p$-Sylow subgroup of $G$.  Theorem \ref{tate} has some immediate consequences for normal $p$-complements in normal subgroups of (pro-)finite groups (indeed, this was the original motivation of Tate's result in the finite context).

\begin{cor}\label{tatecor}Let $G$ be a profinite group, and let $S \in \Sylp(G)$.
\begin{enumerate}[(i)]
\item Let $M$ be a normal subgroup of $G$ such that $S \cap M \leq \Phi(S)$.  Then $SM$ has a normal $p$-complement, and $\rO_{p'}(G/M) = \rO_{p'}(G)M/M$.
\item Let $M$ and $N$ be normal subgroups of $G$ such that $S \cap M \leq \Phi(S)N$.  Then $MN/N$ has a normal $p$-complement.
\end{enumerate}
\end{cor}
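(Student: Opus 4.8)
The plan is to obtain both parts from Tate's theorem (Theorem~\ref{tate}); in each case the real work is to verify one of its four equivalent conditions. For part~(i), put $H = SM$, so that $S$ is a $p$-Sylow subgroup of $H$ and $M \trianglelefteq H$. Since $\rO^p(S) = 1$, the assertion that $H$ has a normal $p$-complement is precisely condition~(iv) of Theorem~\ref{tate} for the pair $S \le H$, so it suffices to check condition~(ii), namely $(H'H^p) \cap S = \Phi(S)$. I would deduce this from the claim that the natural map $\iota \colon S/\Phi(S) \to V := H/H'H^p$ is an isomorphism: its kernel is $(S \cap H'H^p)/\Phi(S)$, so injectivity is equivalent to~(ii). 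Surjectivity is automatic, because the image of $S$ topologically generates the maximal pro-$p$ quotient $H/\rO^p(H)$ (as $S$ is Sylow), hence spans its Frattini quotient $V$. Injectivity is exactly where the hypothesis enters: one has $H/M \cong S/(S \cap M)$, and since $S \cap M \le \Phi(S)$ the largest elementary abelian $p$-quotient of $H/M$ is canonically $S/\Phi(S)$; the composite $S/\Phi(S) \xrightarrow{\iota} V \to S/\Phi(S)$, whose second map is induced by $H \to H/M \cong S/(S\cap M) \to S/\Phi(S)$, is the identity, so $\iota$ is split injective. The crux is thus recognising that $S \cap M \le \Phi(S)$ is precisely the input that splits $\iota$ through $H/M$. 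With~(ii) in hand, Tate gives $\rO^p(H) \cap S = 1$, so $\rO^p(H)$ is the normal $p$-complement of $SM$.

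For the equality $\rO_{p'}(G/M) = \rO_{p'}(G)M/M$, the inclusion $\supseteq$ is immediate. Along the way I would record the following consequence of the previous paragraph: any $L \trianglelefteq G$ with $S \cap L \le \Phi(S)$ is $p$-nilpotent, obtained by applying the result just proved to $L$ in place of $M$ and intersecting $L$ with the normal $p$-complement $K$ of $SL$, using that $L/(L \cap K)$ embeds in the $p$-group $SL/K$. For $\subseteq$, let $L \trianglelefteq G$ with $M \le L$ and $L/M$ a $p'$-group; then $S \cap L = S \cap M \le \Phi(S)$, so $L$ is $p$-nilpotent, its complement $\rO_{p'}(L)$ is characteristic in $L$, hence normal in $G$ and contained in $\rO_{p'}(G)$, and $L = (S \cap L)\,\rO_{p'}(L) \le M\,\rO_{p'}(G)$. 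Applying this to the preimage of $\rO_{p'}(G/M)$ gives the reverse inclusion.

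For part~(ii), I would pass to $\bar G = G/N$ and apply part~(i) to $\bar M = MN/N \trianglelefteq \bar G$ with Sylow subgroup $\bar S = SN/N$. The one computation needed is $\bar S \cap \bar M = (S \cap M)N/N$: both sides are $p$-Sylow subgroups of $\bar M$ (for the right-hand side, compare $p$-parts of orders, using that $S$ meets each of $M$, $N$, $M \cap N$ in a Sylow subgroup), and the right-hand side is contained in the left, so they coincide. Since $\Phi(\bar S) = \Phi(S)N/N$, the hypothesis $S \cap M \le \Phi(S)N$ becomes $\bar S \cap \bar M \le \Phi(\bar S)$, so part~(i) shows $\bar S \bar M$ has a normal $p$-complement; intersecting it with $\bar M$ (again using that the corresponding quotient of $\bar M$ is a $p$-group) shows $MN/N = \bar M$ has a normal $p$-complement.

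I expect the main obstacle to be the isomorphism $S/\Phi(S) \cong H/H'H^p$ underlying part~(i), since everything reduces to seeing that the hypothesis forces Tate's condition~(ii); once this is isolated, the remaining steps are formal. A secondary subtlety is the Sylow-theoretic identity $\bar S \cap \bar M = (S \cap M)N/N$, which is what allows part~(ii) to be descended cleanly to part~(i).
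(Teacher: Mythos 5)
Your proposal is correct and takes essentially the same route as the paper: both arguments verify condition (ii) of Theorem~\ref{tate} for the pair $S \le SM$ (the paper via the identity $(SM)'(SM)^p = \Phi(S)M$ and Dedekind's law, you via the equivalent observation that the hypothesis $S \cap M \le \Phi(S)$ splits $S/\Phi(S) \to SM/(SM)'(SM)^p$ through $SM/M$), then extract the normal $p$-complement of $SM$ and of $M$ itself. The remaining steps also coincide with the paper's: the equality $\rO_{p'}(G/M) = \rO_{p'}(G)M/M$ is obtained by lifting $\rO_{p'}(G/M)$ to $G$ and rerunning the same argument on the preimage, and part (ii) is deduced by passing to $G/N$ and applying part (i) there.
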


\begin{proof}(i) For any normal subgroup $M$ of $G$, we have $(SM)'(SM)^p = \Phi(S)M$.  The condition $S \cap M \le \Phi(S)$ then implies
\[ ((SM)'(SM)^p) \cap S = \Phi(S)M \cap S = \Phi(S) = S'S^p.\]
Hence by Theorem \ref{tate} we have $\rO^p(SM) \cap S = \rO^p(S) \cap S = 1$, in other words $\rO^p(SM)$ is the normal $p$-complement of $SM$.  Note that $\rO^p(SM)$ is also a normal $p$-complement in $M$.

For the final assertion, let $O$ be the lift of $\rO_{p'}(G/M)$ to $G$.  It is clear that $O \ge \rO_{p'}(G)M$.  On the other hand, $S \cap O = S \cap M \le \Phi(S)$, so $O$ has a normal $p$-complement $K$, by the same argument as for $M$.  Since $M$ contains a $p$-Sylow subgroup of $O$, we have $O=KM$; since $K$ is a normal pro-$p'$ subgroup of $G$, we have $K \le O_{p'}(G)$, so $O=\rO_{p'}(G)M$.

(ii) $MN/N$ is a normal subgroup of $G/N$, and $\Phi(S/N) = \Phi(S)N/N$ contains ${(M \cap S)N/N}$.  The result follows by part (i) applied to $G/N$.\end{proof}

\begin{proof}[Proof of Lemma \ref{tatefin}]Since $\Phi(S)$ is open in $S$, there is some open normal subgroup $N$ of $G$ such that $S \cap N \leq \Phi(S)$.  By Corollary \ref{tatecor}, $N/\rO_{p'}(N)$ is a pro-$p$ group, so $G/\rO_{p'}(N)$ is virtually pro-$p$.  Now $\rO_{p'}(N) \leq \rO_{p'}(G)$, so $G/\rO_{p'}(G)$ is an image of $G/\rO_{p'}(N)$; hence $G/\rO_{p'}(G)$ is virtually pro-$p$.\end{proof}

It is worth noting in particular a sufficient condition under which every $p'$-embedding is layer-free.

\begin{cor}\label{finlfcor}Let $S$ be a finitely generated pro-$p$ group and let $G \in \Emb(S)$.  Suppose that $\Phi(S)$ contains every finite normal subgroup of $S$.  Then $\rE(G)=1$.\end{cor}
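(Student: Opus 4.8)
The plan is to argue by contradiction. Suppose $\rE(G) \neq 1$; then $G$ has a component $Q$, and I would aim to show that the hypothesis on finite normal subgroups of $S$, combined with $\rO_{p'}(G) = 1$, forces $Q$ into a finite normal $p$-subgroup of $G$, which is absurd.

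The first step is to produce a finite normal subgroup containing $Q$. By Lemma~\ref{goodcomp}, every component of $G$ lies in a finite normal subgroup $M$ of $G$. The intersection $M \cap S$ is then a finite $p$-subgroup of $S$, and it is normalised by $S$: for $s \in S$ we have $s(M \cap S)s^{-1} = sMs^{-1} \cap sSs^{-1} = M \cap S$, using $M \trianglelefteq G$. Hence $M \cap S$ is a finite normal subgroup of $S$, and the standing hypothesis gives $M \cap S \leq \Phi(S)$.

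Next I would feed this into the transfer machinery. Since $M \trianglelefteq G$ and $S \cap M \leq \Phi(S)$, Corollary~\ref{tatecor}(i) applies and shows that $SM$ has a normal $p$-complement; as noted in the proof of that corollary, the same subgroup $N$ is a normal $p$-complement of $M$. Being the unique normal $p$-complement of $M$, the subgroup $N$ is characteristic in $M$ and therefore normal in $G$, and it is a pro-$p'$ group. Because $G \in \Emb(S)$ satisfies $\rO_{p'}(G) = 1$, we conclude $N = 1$, so $M = M \cap S$ is a finite $p$-group. But then $Q \leq M$ is a finite $p$-group, hence nilpotent, so $Q' < Q$ unless $Q = 1$; this contradicts the definition of a component, for which $Q$ is perfect with $Q/\rZ(Q)$ a (non-trivial) simple group. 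Thus $G$ has no components and $\rE(G) = 1$.

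There is no deep obstacle here—the result is essentially an assembly of Lemma~\ref{goodcomp} and Corollary~\ref{tatecor}—but two points deserve care. The first is that $M \cap S$ really is normal in $S$, which is what allows the hypothesis on finite normal subgroups of $S$ to be applied. The second is that the normal $p$-complement one obtains from Corollary~\ref{tatecor}(i) must be recognised as normal in the whole of $G$ (via its characteristicity in $M \trianglelefteq G$), since it is precisely this that lets $\rO_{p'}(G) = 1$ annihilate it and collapse $M$ to a $p$-group.
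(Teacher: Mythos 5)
Your proof is correct and follows essentially the same route as the paper's: reduce to a finite normal subgroup of $G$ meeting $S$ inside $\Phi(S)$, apply Corollary~\ref{tatecor} to produce a normal $p$-complement, and use $\rO_{p'}(G)=1$ to force a contradiction with the existence of a component. The only difference is cosmetic: the paper applies this to $\rE(G)$ itself, which is finite by Lemma~\ref{tatefin}, whereas you work with a single component inside the finite normal subgroup supplied by Lemma~\ref{goodcomp}, and you route the $p'$-core argument through characteristicity of the $p$-complement rather than through the observation that every component has order divisible by $p$.
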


\begin{proof}Certainly $\rE(G)$ is finite, since $G$ is virtually pro-$p$ by Lemma~\ref{tatefin}, so $\rE(G) \cap S$ is a finite normal subgroup of $S$.  Additionally, $p$ divides the order of every component of $G$, since $\rO_{p'}(G)=1$.  But $\rE(G) \cap S \le \Phi(S)$, so $\rE(G)$ has a normal $p$-complement.  Hence $\rE(G)=1$.\end{proof}

\begin{defn}Let $S$ be a finitely-generated pro-$p$ group and let $G$ be a $p'$-embedding of $S$.  Say $G$ is \emph{Frattini} if $\rO_p(G) \le \Phi(S)$, or more generally, say $G$ is \emph{quasi-Frattini} if $\rO_p(G) \cap \Phi(S)$ is normal in $G$.  Say $G$ is \emph{standard} if $\rO_p(G) \cap \Phi(S)$ is not normal in $G$.

Given a profinite group $G$, define the \emph{$p$-layer} $\rE_p(G)$ to be the set of components of $G$ of order divisible by $p$.  (Note that if a quasisimple group $Q$ is of order divisible by $p$, then the simple quotient $Q/\rZ(Q)$ is also of order divisible by $p$.)\end{defn}

\begin{lem}\label{centauto}Let $G$ be a (topological) group and let $\alpha$ be an automorphism of $G$ (as a topological group) that acts trivially on $G/\rZ(G)$.  Then $\alpha$ acts trivially on $G'$.  In particular, if $G$ is (topologically) perfect then $\Aut(G)$ acts faithfully on $G/\rZ(G)$.\end{lem}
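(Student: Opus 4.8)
The plan is to verify that $\alpha$ fixes every individual commutator, and then promote this to all of $G'$ using continuity. Since $\alpha$ acts trivially on $G/\rZ(G)$, for each $g \in G$ I can write $\alpha(g) = g z_g$ for some $z_g \in \rZ(G)$ depending on $g$. The key computation is then to apply $\alpha$ to a commutator $[g,h]$: because $\alpha$ is a homomorphism,
\[ \alpha([g,h]) = [\alpha(g),\alpha(h)] = [g z_g,\, h z_h]. \]
The central factors $z_g$ and $z_h$ commute with every element of $G$, so they may be slid freely through the commutator word and cancelled against their inverses, leaving $[g z_g, h z_h] = [g,h]$. Hence $\alpha$ fixes each commutator $[g,h]$.

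Next I would observe that the fixed-point set $\{x \in G : \alpha(x) = x\}$ is a subgroup of $G$ (immediate, since $\alpha$ is an automorphism) and is closed, because $\alpha$ is continuous, so this set is the equaliser of the two continuous maps $\alpha$ and $\id$. It therefore contains the closed subgroup generated by all commutators, which is exactly $G' = \overline{[G,G]}$. Thus $\alpha$ acts trivially on $G'$, as required.

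For the final assertion, suppose $G$ is topologically perfect, so $G = G'$, and let $\alpha$ lie in the kernel of the natural map $\Aut(G) \to \Aut(G/\rZ(G))$; this kernel is precisely the set of automorphisms acting trivially on $G/\rZ(G)$. By what has just been shown, such an $\alpha$ acts trivially on $G' = G$, so $\alpha = \id$. Hence the map is injective, which says exactly that $\Aut(G)$ acts faithfully on $G/\rZ(G)$.

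The only point requiring any care is the topological step: the bare commutator computation shows only that $\alpha$ fixes the abstract subgroup $[G,G]$, and one must invoke continuity of $\alpha$ to pass to its closure $G'$. Everything else is a direct manipulation of central elements, so I do not anticipate a substantive obstacle.
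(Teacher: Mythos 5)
Your proof is correct and follows essentially the same route as the paper: write $\alpha(g)$ as $g$ times a central element, slide the central factors out of the commutator to see $\alpha$ fixes every commutator, then use continuity to pass to the closed subgroup $G'$. The only difference is cosmetic — the paper writes the computation with the notation $[\alpha,x] = x\alpha(x^{-1})$, while you make the closed-fixed-point-set (equaliser) argument slightly more explicit.
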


\begin{proof}Let $\alpha$ be an automorphism of $G$ and write $[\alpha,x]$ for $x\alpha(x^{-1})$.  Suppose $[\alpha,x] \in \rZ(G)$ for all $x \in G$.  Then for all $x,y \in G$, we have the following:
\[ \alpha([x,y]) = [\alpha(x),\alpha(y)] = \alpha(x)\alpha(y)\alpha(x^{-1})\alpha(y^{-1}) = [\alpha,x]^{-1}x^{-1}[\alpha,y]^{-1}y^{-1}x[\alpha,x]y[\alpha,y] \]
\[ = xyx^{-1}y^{-1} = [x,y],\]
so $\alpha$ fixes every commutator in $G$.  Since $G'$ is generated topologically by the commutators in $G$, it follows that the action of $\alpha$ on $G'$ is trivial.\end{proof}

\begin{lem}\label{fratemblem}Let $S$ be a non-trivial finitely generated pro-$p$ group and let $G \in \Emb(S)$ be quasi-Frattini.  Then $S/\rO_p(G)$ acts faithfully on $\rE_p(G/\rO_p(G))$.  In particular, $G$ is $p$-separable if and only if $S \unlhd G$.  If $G \in \Emb(S)$ is Frattini, then $G/\rO_p(G)$ acts faithfully on $\rE_p(G/\rO_p(G))$.\end{lem}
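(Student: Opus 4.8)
The plan is to reduce everything to the self-centralising property of the generalised pro-Fitting subgroup (Lemma~\ref{gfitlem}), after first passing to a quotient in which $\rO_p$ becomes elementary abelian and central in the Sylow. Throughout write $P = \rO_p(G)$, $\bar G = G/P$ and $\bar S = S/P$, and read $\rE_p(\bar G)$ as the closed subgroup generated by the components of order divisible by $p$. Two observations hold in any such quotient, and I would record them first: since a normal pro-$p$ subgroup of $\bar G$ lifts to a normal pro-$p$ subgroup of $G$ containing $P$, we have $\rO_p(\bar G) = 1$; consequently $\rF(\bar G) = \rO_{p'}(\bar G)$ is pronilpotent, so it contains no non-trivial perfect subgroup, and hence $\bar G$ has no components of order prime to $p$. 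Thus $\rE_p(\bar G) = \rE(\bar G)$, and it is on this subgroup that $\bar S$ (resp.\ $\bar G$) is to act faithfully.

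I would treat the Frattini case first, as it is immediate and yields the stronger conclusion. If $\rO_p(G) \le \Phi(S)$, then Corollary~\ref{tatecor}(i) applied with $M = P$ gives $\rO_{p'}(\bar G) = \rO_{p'}(G)P/P = 1$; combined with $\rO_p(\bar G) = 1$ this forces $\rF(\bar G) = 1$, so that $\rF^*(\bar G) = \rE(\bar G) = \rE_p(\bar G)$. Lemma~\ref{gfitlem} then gives $\rC_{\bar G}(\rE_p(\bar G)) = \rC_{\bar G}(\rF^*(\bar G)) = \rZ(\rF(\bar G)) = 1$, which is exactly the assertion that $G/\rO_p(G)$ acts faithfully on $\rE_p(\bar G)$.

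For the quasi-Frattini case I would set $D = \rO_p(G) \cap \Phi(S)$, which is normal in $G$ by hypothesis, and pass to $\hat G = G/D$ with Sylow $\hat S = S/D$. Since $[S,P] \le S' \le \Phi(S)$ and $[S,P] \le P$, we have $[S,P] \le D$, so the lift argument again gives $\rO_p(\hat G) = P/D$, which is elementary abelian (it embeds in $S/\Phi(S)$) and central in $\hat S$; meanwhile Corollary~\ref{tatecor}(i) gives $\rO_{p'}(\hat G) = 1$, so $\hat G \in \Emb(\hat S)$. Because $\hat G/\rO_p(\hat G) = \bar G$ and $\hat S/\rO_p(\hat G) = \bar S$ with the same conjugation action, the assertion for $G$ is \emph{literally} the assertion for $\hat G$, so I may assume from the outset that $P$ is elementary abelian and central in $S$ and that $\rO_{p'}(G) = 1$. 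In this reduced situation $\rF(G) = P$, $\rF^*(G) = P\rE(G)$, and $\rC_G(\rF^*(G)) = \rZ(P) = P$ by Lemma~\ref{gfitlem}. Now take $\bar x \in \bar S$ centralising $\rE(\bar G)$ and lift it to $x \in S$; since each component of $G$ maps to a component of $\bar G$ (because $Q \cap P \le \rZ(Q)$), the image $\rE(G)P/P$ lies in $\rE(\bar G)$, whence $[x,\rE(G)] \le \rE(G) \cap P \le \rZ(\rE(G))$, the last inclusion holding because $\rE(G)$ is a central product of non-abelian quasisimple groups (Lemma~\ref{goodcomp}) and so any normal $p$-subgroup of it is central. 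Lemma~\ref{centauto} then forces $x$ to centralise $\rE(G)$, and as $P \le \rZ(S)$ it also centralises $P$, hence centralises all of $\rF^*(G)$; therefore $x \in \rC_G(\rF^*(G)) = P$ and $\bar x = 1$. This establishes the faithfulness of $\bar S$ on $\rE_p(\bar G) = \rE(\bar G)$.

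The "in particular" is then a formal consequence: if $S \unlhd G$ then $S = P$, so $\bar G = G/S$ is pro-$p'$ and $G$ is $p$-separable; conversely, if $G$ is $p$-separable then $\bar G$ has no component of order divisible by $p$, so $\rE_p(\bar G) = 1$, and faithfulness of $\bar S$ on the trivial group forces $\bar S = 1$, i.e.\ $S = \rO_p(G) \unlhd G$. I expect the main obstacle to be the reduction step: verifying that the normality of $D = \rO_p(G) \cap \Phi(S)$ really does make $\rO_p(\hat G) = P/D$ central in $\hat S$, and that the problem is genuinely unchanged on passing to $\hat G$. This is precisely where the quasi-Frattini hypothesis is consumed, and it is what separates this case from the standard case, where faithfulness may fail. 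The other delicate point is the lift-and-apply-Lemma~\ref{centauto} step, which is what converts the hypothesis "$\bar x$ centralises $\rE(\bar G)$" into the usable statement "$x$ centralises $\rE(G)$".
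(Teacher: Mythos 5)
Your reduction to $\hat G = G/D$ and your final lift-and-centralise computation are essentially sound, and they run parallel to the paper's own proof (which also works modulo $K = \rO_p(G) \cap \Phi(S)$ and uses the same three tools: Corollary~\ref{tatecor}, Lemma~\ref{gfitlem} and Lemma~\ref{centauto}). The genuine gap is in your preliminary observation, and it is consumed at the decisive moment. You assert that, writing $\bar G = G/\rO_p(G)$, one has $\rF(\bar G) = \rO_{p'}(\bar G)$ and hence that $\bar G$ has no components of order prime to $p$, so that $\rE_p(\bar G) = \rE(\bar G)$. Only the containment $\rF(\bar G) \le \rO_{p'}(\bar G)$ is true in general: $\rO_{p'}(\bar G)$ need not be pronilpotent, and the conclusion fails. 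Concretely, take $p = 7$, let $S = P$ be a faithful elementary abelian $\bF_7 A_5$-module and $G = P \rtimes A_5$; then $\rO_{7'}(G) = 1$ (coprimality gives $\rO_{7'}(G) \le \rC_G(P)$, which equals $P$ by faithfulness), so $G$ is a quasi-Frattini member of $\Emb(S)$ (here $\rO_p(G) \cap \Phi(S) = 1$), yet $\bar G = A_5$ is itself a component of order prime to $7$: $\rE(\bar G) = A_5$ while $\rE_7(\bar G) = 1$. The point is that Corollary~\ref{tatecor} kills $\rO_{p'}$ of $G/M$ only when $S \cap M \le \Phi(S)$; it applies to $M = D$ (your reduction) but not to $M = \rO_p(G)$. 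Your Frattini case is unaffected, since there $\rO_p(G) \le \Phi(S)$ and the corollary does apply.

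Because of this, your main (quasi-Frattini) computation, which starts from $\bar x \in \bar S$ centralising $\rE(\bar G)$, proves only that $\bar S$ acts faithfully on $\rE(\bar G)$ --- strictly weaker than the asserted faithfulness on $\rE_p(\bar G)$, and insufficient even for your own deduction of the ``in particular'' clause: $p$-separability forces $\rE_p(\bar G) = 1$ but leaves possible $p'$-components, so faithfulness on $\rE(\bar G)$ would not force $\bar S = 1$. Fortunately the repair is short. In your reduced situation $\rO_{p'}(G) = 1$, so every component $Q$ of $G$ has order divisible by $p$; then $Q/\rZ(Q)$ also has order divisible by $p$, so the image of $Q$ in $\bar G$ is a component of order divisible by $p$. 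Hence the image of $\rE(G)$ lies in $\rE_p(\bar G)$, not merely in $\rE(\bar G)$, and starting from $\bar x$ centralising $\rE_p(\bar G)$ the rest of your argument ($[x,\rE(G)] \le P \cap \rE(G) \le \rZ(\rE(G))$, then Lemma~\ref{centauto} and Lemma~\ref{gfitlem}) goes through unchanged. This is precisely how the paper sidesteps the issue: it never identifies $\rE$ with $\rE_p$ in $G/\rO_p(G)$, but works throughout with $\rE_p(G/K)$, where Tate's theorem genuinely excludes $p'$-components, and only at the end realises $\rE_p(G/\rO_p(G))$ as a quotient of $\rE_p(G/K)$ by a central subgroup.
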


\begin{proof}Let $K = \rO_p(G) \cap \Phi(S)$ and let $E =  \rE_p(G/\rO_p(G))$.  By Corollary \ref{tatecor} (i), $\rO_{p'}(G/K)=1$.  Thus $ \rF^*(G/K)$ is generated by $\rO_p(G)/K$ together with the components of $G/K$, and all components of $G/K$ have order divisible by $p$.  The centraliser of $ \rF^*(G/K)$ inside $G/K$ is $\rZ( \rF^*(G/K))$, which is a subgroup of $\rO_p(G/K)$ since $\rO_{p'}(G/K)=1$.  The action of $S$ on $\rO_p(G)/K$ is trivial, since $\rO_p(G)/K$ corresponds to $\rO_p(G)\Phi(S)/\Phi(S)$, which is a central factor of $S$ as $\Phi(S) \ge [S,S]$.  Thus the kernel of the action of $S/K$ on $\rE_p(G/K)$ is contained in $\rO_p(G)$.  Now $E$ corresponds to a quotient of the perfect group $\rE_p(G/K)$ by a central subgroup, so $S/\rO_p(G)$ acts faithfully on $E$ by Lemma~\ref{centauto}.  If $S$ is not normal in $G$, then $S/\rO_p(G)$ is non-trivial, so $E$ is also non-trivial, so $G$ is not $p$-separable.

If $\rO_p(G) \le \Phi(S)$, we have $K = \rO_p(G)$, so $ \rF^*(G/K) = \rE_p(G/K) =  E$, and $\rZ(\rE_p(G/K))=1$ so we have a faithful action of $G/\rO_p(G)$ on $E$.\end{proof}

\section{Extension theory}

Given a group $G$ acting on an abelian group $M$, write $\rH^n(G,M)$ for the $n$-th cohomology group of $G$ on $M$.

\begin{prop}\label{htwothm}Let $G$ be a finite group, and let $M$ be an abelian finite group on which $G$ acts.  Given an extension
\[\mcE = \{\xymatrix{1 \ar[r] & M \ar[r]^\alpha & E \ar[r]^\pi & G \ar[r] & 1}\}\]
of $M$ by $G$, obtain $t_\mcE$ as follows: 

Let $\tau$ be any function from $G$ to $E$ such that $\pi \tau = \id_G$.  Let $f: G \times G \rightarrow M$ be the function determined by the equation $\tau(x)\tau(y) = \tau(xy) \alpha(f(x,y))$.  Let $t_\mcE$ be the equivalence class of $f$ modulo $2$-coboundaries.

Then:
\vspace{-12pt}
\begin{enumerate}[(i)]
\item $f$ is a $2$-cocycle, any choice of $\tau$ gives the same $t_\mcE$, and $t_\mcE$ depends only on the equivalence class of the extension $\mcE$;
\item the map $\mcE \mapsto t_\mcE$ defines a bijection from the set of equivalence classes of extensions of $M$ by $G$ to $\rH^2(G,M)$;
\item $\mcE$ splits if and only if $t_\mcE = 0$.\end{enumerate}\end{prop}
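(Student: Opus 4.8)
This is the classical Schreier correspondence between extensions and second cohomology, and I would prove the three parts in the stated order, treating the whole thing as careful bookkeeping with one $G$-action. The extensions in question are understood to induce the given action of $G$ on $M$: for $x \in G$ and $m \in M$ this induced action, which I write ${}^{x}m$, is defined by $\alpha({}^{x}m) = e\,\alpha(m)\,e^{-1}$ for any $e \in E$ with $\pi(e) = x$, and is well defined precisely because $M$ is abelian, so that conjugation by $\alpha(M)$ is trivial and the choice of $e$ is immaterial. Throughout I would identify $M$ with $\alpha(M) \trianglelefteq E$ to lighten notation, and normalise the lift by $\tau(1) = 1$, which forces $f(x,1) = f(1,y) = 0$; since any two lifts give cohomologous cocycles (established below), this normalisation is harmless.

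For part (i), the cocycle property comes out of associativity in $E$: expanding $\tau(x)\tau(y)\tau(z)$ under the two bracketings, repeatedly applying $\tau(a)\tau(b) = \tau(ab)\alpha(f(a,b))$, and commuting the resulting $M$-factors past the $\tau$'s by means of the conjugation rule above, collects all the $M$-terms into the identity $\delta f = 0$, where $\delta$ is the coboundary operator of the inhomogeneous complex computing $\rH^2(G,M)$. If $\tau'$ is a second lift, then $\tau'(x) = \tau(x)\alpha(c(x))$ for a unique function $c \colon G \to M$, because $\tau'(x)\tau(x)^{-1} \in \ker \pi = \alpha(M)$; substituting into the defining equation and again collecting $M$-terms shows that the cocycle $f'$ of $\tau'$ satisfies $f' = f + \delta c$, so $t_\mcE$ does not depend on the lift. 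Finally, if $\phi \colon E \to E'$ is an equivalence of extensions, then $\phi\tau$ is a lift for the second extension whose associated function $f$ coincides with that of $\tau$, so equivalent extensions yield the same class; this gives all of (i).

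For part (ii), I would produce an explicit two-sided inverse to $\mcE \mapsto t_\mcE$. Given a normalised cocycle $f$, form the group $E_f$ on the underlying set $M \times G$ with multiplication twisted by $f$ and by the action ${}^{x}(-)$; associativity of this product is exactly $\delta f = 0$, the identity and inverses are read off directly, and the resulting extension $\mcE_f$ induces the given action and has $t_{\mcE_f} = [f]$, proving surjectivity. For injectivity, suppose $t_\mcE = t_{\mcE'}$ and choose lifts $\tau, \tau'$; after modifying $\tau'$ by the $1$-cochain witnessing $f' = f + \delta c$ I may assume the two factor sets are literally equal, and then the map $\phi \colon \alpha(m)\tau(x) \mapsto \alpha'(m)\tau'(x)$ is a well-defined bijection, a homomorphism (because the factor sets agree), and an equivalence of extensions. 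Hence the correspondence is a bijection onto $\rH^2(G,M)$.

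Part (iii) then follows at once: $\mcE$ splits if and only if some lift is a homomorphism, i.e.\ has $f \equiv 0$; such a lift exists precisely when the cocycle of some (equivalently, any) lift is a coboundary, which is exactly the condition $t_\mcE = 0$, the required splitting being recovered from a relation $f = \delta c$ by replacing $\tau$ with the corrected lift $x \mapsto \tau(x)\alpha(c(x))^{-1}$. The one genuinely delicate point, and where I would concentrate the care, is fixing the side and sign conventions in the conjugation rule and in the definition of $\delta$ so that the cocycle identity from associativity, the coboundary arising from a change of lift, and the twisted product in $E_f$ are all mutually consistent; once these conventions are pinned down, every step is a short formal computation.
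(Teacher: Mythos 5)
Your argument is correct, but it is not the paper's route: the paper gives no computation at all for this proposition, simply citing \cite{Wil}, Lemmas 6.2.1 and 6.2.2, where the correspondence is established in the more general setting of profinite groups and continuous (profinite) cohomology. What you supply instead is the self-contained classical factor-set (Schreier) argument, and all the essential steps are in place: the induced action well defined because $M$ is abelian, the cocycle identity extracted from associativity in $E$, cohomologous cocycles from a change of section, invariance under equivalence of extensions, the twisted product $E_f$ on $M \times G$ for surjectivity, the explicit equivalence $\alpha(m)\tau(x) \mapsto \alpha'(m)\tau'(x)$ for injectivity once the factor sets are matched, and the recovery of a splitting from a coboundary relation. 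The trade-off between the two approaches is economy versus self-containedness: the citation is shorter and places the result in the profinite framework that is the paper's natural habitat, although, as it happens, the application in Theorem~\ref{extnthm} reduces by an inverse limit argument to extensions of the finite groups $G_i$ by the finite modules $M_i$, so your finite-group version is fully adequate for the paper's purposes; your proof buys independence from the literature at the cost of the convention bookkeeping you rightly flag as the delicate point. One detail you should make explicit in the surjectivity step: an arbitrary class in $\rH^2(G,M)$ is not given to you by a normalised cocycle, so you need the standard observation that every $2$-cocycle is cohomologous to a normalised one (subtract the coboundary of the constant $1$-cochain $c(x) = f(1,1)$, using the identities $f(1,y) = f(1,1)$ and $f(x,1) = {}^{x}f(1,1)$ that follow from the cocycle condition); without this remark your construction of $E_f$ only realises the normalised representatives, and surjectivity onto all of $\rH^2(G,M)$ is not yet established.
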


\begin{proof}See \cite{Wil}, Lemmas 6.2.1. and 6.2.2.  (In fact, \cite{Wil} gives a proof for profinite groups in the context of profinite cohomology.)\end{proof}

\begin{prop}\label{coprimecohom}Let $M$ be a finite abelian group, and let $G$ be a finite group acting on $M$.  Suppose $H$ is a subgroup of $G$ for which $|G:H|$ is coprime to $|M|$.  Then for $n>0$, the restriction map $\rH^n(G,M) \rightarrow \rH^n(H,M)$ is injective.\end{prop}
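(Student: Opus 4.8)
The plan is to exploit the restriction–corestriction (transfer) pair in group cohomology, together with the standard fact that the two compose to multiplication by the index. First I would record the elementary observation that for every $n>0$ the group $\rH^n(G,M)$ is annihilated by $|M|$: in the bar resolution the cochain groups $C^n(G,M)$ consist of functions into $M$, and $M$ is itself killed by $|M|$, so the entire cochain complex is $|M|$-torsion, hence so is its cohomology.

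Next I would invoke the corestriction (transfer) homomorphism $\mathrm{cor}\colon \rH^n(H,M)\to \rH^n(G,M)$ attached to the inclusion $H\le G$; this is standard and is available in the same profinite cohomology framework cited above. The structural input I rely on is the classical identity $\mathrm{cor}\circ\mathrm{res} = |G:H|\cdot\id$ as an endomorphism of $\rH^n(G,M)$, where $\mathrm{res}\colon\rH^n(G,M)\to\rH^n(H,M)$ is the restriction map appearing in the statement.

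With these two ingredients the conclusion is immediate. Since $\gcd(|G:H|,|M|)=1$, I may choose an integer $k$ with $k\cdot|G:H|\equiv 1 \pmod{|M|}$; because $\rH^n(G,M)$ is $|M|$-torsion for $n>0$, multiplication by $|G:H|$ on $\rH^n(G,M)$ is then an automorphism, with inverse multiplication by $k$. As this automorphism factors as $\mathrm{cor}\circ\mathrm{res}$, the first factor $\mathrm{res}$ is forced to be injective: if $\mathrm{res}(x)=0$ then $(\mathrm{cor}\circ\mathrm{res})(x)=|G:H|\cdot x=0$, whence $x=0$.

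The argument is essentially formal, so I do not expect a genuine obstacle beyond having the corestriction map and the identity $\mathrm{cor}\circ\mathrm{res}=|G:H|\cdot\id$ in hand; the only points requiring care are verifying that $\rH^n(G,M)$ is annihilated by $|M|$ (rather than merely by $|G|$, which is the more commonly quoted bound) and confirming the direction of the composite. Both are routine, and the coprimality hypothesis is exactly what converts multiplication by the index into an automorphism on the $|M|$-torsion group.
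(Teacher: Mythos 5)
Your proposal is correct, and it is essentially the paper's approach: the paper proves this proposition simply by citing Evens, \emph{The Cohomology of Groups}, Proposition 4.2.5, and the argument behind that citation is exactly the restriction--corestriction identity $\mathrm{cor}\circ\mathrm{res}=|G:H|\cdot\id$ combined with coprimality that you give. Your added care in noting that $\rH^n(G,M)$ is annihilated by $|M|$ (not merely $|G|$) is precisely the point needed to make the coprimality hypothesis bite, so your write-up is a complete, self-contained version of the outsourced proof.
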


\begin{proof}
 See \cite{Eve}, Proposition 4.2.5.
\end{proof}

\begin{proof}[Proof of Theorem \ref{extnthm}]We may regard $P$ as an open subgroup of $S$, and $S$ as a $p$-Sylow subgroup of both $G$ and $G^*$.  Define subgroups $P_i$ of $P$ by $P_1 = P$, and thereafter $P_{i+1} = [P_i,P]P^p_i$.  Then $P_i$ is an open characteristic subgroup of $P$ for all $i$.  Set $G_i = G/P_i$, set $G^*_i = G^*/P_i$, and set $M_i = P_i/P_{i+1}$.  Then for $i \geq 1$, we have extensions $\mcE_i$ and $\mcE^*_i$ of finite groups given by 
\[\mcE_i = \{\xymatrix{1 \ar[r] & M_i \ar[r] & G_{i+1} \ar[r] & G_i \ar[r] & 1 }\}\]
\[\mcE^*_i = \{\xymatrix{1 \ar[r] & M_i \ar[r] & G^*_{i+1} \ar[r] & G^*_i \ar[r] & 1 }\}\]
and by an inverse limit argument, it suffices to prove that these extensions are equivalent for all $i$.  By induction, we may assume that we have an isomorphism $\theta$ between $G_i$ and $G^*_i$; furthermore, the actions of $G_i$ and $G^*_i$ on $P_i/P_{i+1}$ are determined by the action of $K$ on $P_i/P_{i+1}$, which is in turn determined by the action of $K$ on $P/\Phi(P)$, by Lemma \ref{cinvlem} (ii).  Hence $\theta$ induces an isomorphism from $M_i$ as a $G_i$-module to $M_i$ as a $G^*_i$-module.  Now by Proposition \ref{htwothm}, the extensions $\mcE_i$ and $\mcE^*_i$ are both associated in a natural way to elements $t$ and $t^*$ say of $\rH^2(G_i,M_i)$, and the extensions are equivalent if and only if $t=t^*$.  However, both extensions have the common restriction
\[\xymatrix{1 \ar[r] & M_i \ar[r] & S_{i+1} \ar[r] & S_i \ar[r] & 1 },\]
where $S_i = S/P_i$.  This corresponds to the condition that $t^\rho = (t^*)^{\rho}$, where
$\xymatrix{\rH^2(G_i,M_i) \ar[r]^\rho & \rH^2(S_i,M_i)}$ is the natural restriction map.  But $S_i$ is a $p$-Sylow subgroup of $G_i$ and $M_i$ is a $p$-group, so by Proposition \ref{coprimecohom}, $\rho$ is injective.  Hence $t=t^*$ and so $\mcE_i$ and $\mcE^*_i$ are equivalent.\end{proof}

Corollary \ref{embsizecor} is immediate, given the fact that a finitely generated pro-$p$ group has only finitely many (normal) subgroups of any given finite index.

\section{The critical cases}

In this section, we establish `critical' subclasses of $\Emb(S)$, $\EmbLF(S)$ and $\EmbS(S)$ with more restricted structure, such that for a fixed finitely generated pro-$p$ group $S$, the class $\Emb(S)$, $\EmbLF(S)$ or $\EmbS(S)$ is infinite if and only if the corresponding critical subclass is infinite.

\begin{defn}Let $G$ be a $p'$-embedding of the finitely generated pro-$p$ group $S$ and write $P=\rO_p(G)$.  Define the subclasses $\CEmbA(S)$, $\CEmbC(S)$, $\CEmbLF(S)$ and $\CEmbL(S)$ of $\Emb(S)$ respectively as follows:

Let $G \in \CEmbA(S)$ if $G=SH$ such that $H$ is a non-trivial finite elementary abelian $q$-group (for $q$ a prime distinct from $p$), $HP/P$ is a minimal normal subgroup of $G/P$, $G = \rO^q(G)$ and $\rN_G(P \cap \Phi(S)) = S$.

Let $G \in \CEmbC(S)$ if $G=SH$ such that $H$ is a non-abelian finite $q$-group (for $q$ a prime distinct from $p$) that has no proper critical subgroups in the sense of Thompson (in particular, $H$ is critical in itself, so $\Phi(H) \le \rZ(H)$), $HP/\rZ(H)P$ is a chief factor of $G$, $G = \rO^q(G)$ and $\rN_G(P \cap \Phi(S)) \le S\rZ(H)$.  Define $\CEmbS(S) := \CEmbA(S) \cup \CEmbC(S)$.

Let $G \in \CEmbLF(S)$ if $\rE(G)=1$ and $G=SQ$ such that $Q \ge P$ and $Q/P$ is the normal closure of a component of $G/P$ of order divisible by $p$.

Let $G \in \CEmbL(S)$ if $G=SQ$ such that $Q$ is the normal closure of a component of $G$. (Here the component is necessarily of order divisible by $p$.)\end{defn}

\begin{thm}\label{critthm}Let $S$ be a finitely generated pro-$p$ group.
\begin{enumerate}[(i)]
\item If $\CEmbS(S)$ is finite then $\EmbS(S)$ is finite.
\item If $\CEmbS(S)$ and $\CEmbLF(S)$ are finite then $\EmbLF(S)$ is finite.
\item If $\CEmbS(S)$, $\CEmbLF(S)$ and $\CEmbL(S)$ are finite then $\Emb(S)$ is finite.\end{enumerate}\end{thm}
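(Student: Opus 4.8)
The plan is to prove each implication by bounding $|G:S|$ over the relevant class, invoking Corollary~\ref{embsizecor}, which tells us that a subclass of $\Emb(S)$ is finite precisely when $|G:S|$ is bounded on it. Since each $G$ is virtually pro-$p$ by Lemma~\ref{tatefin}, on writing $P=\rO_p(G)$ the quotient $\bar G = G/P$ is a finite group with $\rO_p(\bar G)=1$, and $S/P$ is a Sylow $p$-subgroup of $\bar G$, so $|G:S|$ is exactly the $p'$-part of $|\bar G|$. In each part it therefore suffices to bound this $p'$-part in terms of the indices occurring in the critical classes (which, being finite, have bounded index, again by Corollary~\ref{embsizecor}). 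I would argue the contrapositive: starting from a sequence of embeddings of unbounded index in the ambient class, I extract a sequence of unbounded index inside one of the critical classes.

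The structural backbone is the generalised pro-Fitting subgroup. Here $\rF(G)=P$ and $\rF^*(G)=P\rE(G)$ with $\rC_G(\rF^*(G))=\rZ(P)$ by Lemma~\ref{gfitlem}; likewise $\rF^*(\bar G)=\rO_{p'}(\bar G)\rE(\bar G)$ is self-centralising in $\bar G$. Thus $\bar G/\rZ(\rF^*(\bar G))$ embeds in $\Aut(\rF^*(\bar G))$, so the $p'$-part of $|\bar G|$ is bounded by a function of $|\rO_{p'}(\bar G)|$ and of the isomorphism type of the layer $\rE(\bar G)$; and the number and isomorphism types of the components of $G$ itself, all of order divisible by $p$ since $\rO_{p'}(G)=1$, are controlled in the same way through $\rE(G)$. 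The three critical classes are designed to bound exactly these three inputs: $\CEmbS(S)$ the $p'$-core $\rO_{p'}(\bar G)$, $\CEmbLF(S)$ the quotient layer $\rE(\bar G)$, and $\CEmbL(S)$ the layer $\rE(G)$ of $G$ itself. This yields the cumulative structure of (i)--(iii).

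For (i), where $G$ is $p$-separable so $\rE(G)=\rE(\bar G)=1$ and $\rF^*(\bar G)=R:=\rO_{p'}(\bar G)$, I take a chief factor of $\bar G$ lying inside $R$; it is elementary abelian of order a power of some prime $q\neq p$. Lifting through Schur--Zassenhaus produces a finite $q$-subgroup $H$ of $G$, and after replacing $SH$ by its image modulo $\rO_{p'}$ and passing to the $q$-residual (so that $G=\rO^q(G)$, noting $\rO^q(G)\ge S$ as $S$ is pro-$p$) I obtain a $p'$-embedding of $S$ with the required chief-factor condition. When the relevant $q$-section is abelian this lands in $\CEmbA(S)$; otherwise I apply Thompson's theorem~\ref{thomcrit} to replace $H$ by a critical subgroup, so that $\Phi(H)\le\rZ(H)$ and $H$ has no proper critical subgroup, landing in $\CEmbC(S)$, the $q$-group kernel in Thompson's theorem ensuring the action is not lost. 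The normaliser conditions $\rN_G(P\cap\Phi(S))=S$ (respectively $\le S\rZ(H)$) are then forced using control of $p$-transfer: Corollary~\ref{tatecor} lets me detect when $P\cap\Phi(S)$ fails to be normal and trim the embedding to the standard configuration. Parts (ii) and (iii) add, respectively, the normal closure in $\bar G$ of a single component of order divisible by $p$ (a member of $\CEmbLF(S)$, via Corollary~\ref{lflem}) and the normal closure in $G$ of a single component (a member of $\CEmbL(S)$, via Lemma~\ref{goodcomp}); the complementary structure is handled by the preceding part and the bounds multiply.

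The main obstacle is that $P=\rO_p(G)$ may lie deep inside $S$, so that $d(P)$, and hence $\dim P/\Phi(P)$, is not bounded in terms of $d(S)$; one therefore cannot bound the $p'$-action simply by embedding $\bar G$ into $\GL(P/\Phi(P))$, and indeed Proposition~\ref{ascembed} shows the ambient class can be infinite. The real content is to show that an unbounded $p'$-contribution to $|\bar G|$ genuinely forces an unbounded family of critical embeddings, rather than being dispersed across many small ones. This is precisely where the normaliser and $\rO^q$-conditions do the work: they pin the essential part of the $q$-action onto a single chief factor or critical subgroup and guarantee, via control of $p$-transfer, that the extracted critical embedding has index comparable to the piece of $\bar G$ from which it arose, so that unboundedness survives the extraction.
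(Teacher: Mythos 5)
Your high-level strategy coincides with the paper's: bound $|G:S|$ and invoke Corollary~\ref{embsizecor}, reduce via self-centralisation of the generalised Fitting subgroup to the case $G=FS$ with $F/\rO_p(G)=\rF^*(G/\rO_p(G))$, split into prosoluble pieces and layer pieces, refine the $q$-pieces by Thompson critical subgroups (Theorem~\ref{thomcrit}), and absorb failures of the normaliser conditions into the finite class of $p$-separable embeddings with $S$ normal (Lemma~\ref{fratemblem}). But there is a genuine gap precisely at the point you label ``the real content''. You assert that the normaliser and $\rO^q$-conditions guarantee that each extracted critical embedding has index comparable to the piece of $\bar G$ it came from, ``so that unboundedness survives the extraction''. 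Even granting the first half, the conclusion does not follow: an unbounded $p'$-part of $\bar G$ can be dispersed into an unbounded \emph{number} of chief factors each of bounded size, in which case every extracted critical embedding $SH_i$ has bounded index and your contrapositive produces no unbounded family in $\CEmbA(S)$ or $\CEmbC(S)$ at all. This dispersal scenario is exactly what must be ruled out, and nothing in your proposal rules it out.

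The paper closes this gap by a different mechanism, which you are missing. Finiteness of $\CEmbA(S)$ and $\CEmbC(S)$ (together with the finite exceptional class $\mcN$) bounds $|S:P_i|$ for $P_i=\rO_p(SH_i)$; since a finitely generated pro-$p$ group has only finitely many open subgroups of any given index, each $P_i$ then ranges over a \emph{finite set of subgroups of $S$}, independently of how many factors $H_i$ occur. Hence $P=\bigcap_i P_i$ also ranges over a finite set, so $|S:P|$ is bounded even when the number of pieces is not; this is how boundedness of the pieces is converted into boundedness of the whole. Once $|S:P|$ is bounded, the Schreier index formula bounds $d(P)$, and then the faithful action of $G/P$ on $P/\Phi(P)$ (Corollary~\ref{lflem}) embeds $G/P$ into $\GL(d(P),p)$ and bounds $|G:S|$ --- exactly the step you dismissed as unusable. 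It is unusable only \emph{before} $|S:P|$ has been bounded, which is why the paper's order of operations (first pin down $P$, then use the Frattini-quotient action) is essential. A secondary omission: in parts (ii) and (iii) you need the number of $S$-orbits of components to be bounded, which the paper obtains from Lemma~\ref{pspace}; without it your claim that ``the bounds multiply'' is also incomplete.
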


\begin{defn}Let $S$ be a finitely-generated pro-$p$ group.  Define the invariant $d_f(S)$ to be the maximum value of $\log_p|K\Phi(S):\Phi(S)|$ as $K$ ranges over the finite normal subgroups of $S$.  For instance, $d_f(S) = d(S)$ if and only if $S$ is finite, while $d_f(S)=0$ if and only if all finite normal subgroups of $S$ are contained in $\Phi(S)$.\end{defn}

\begin{lem}\label{pspace}Let $G$ be a profinite group with a finitely-generated $p$-Sylow subgroup $S$.  Let $\mcX$ be a set of finite normal subgroups of $G$ and let $H = \overline{\langle \mcX \rangle}$.  Then there is a subset $\mcK$ of $\mcX$ such that $|\mcK| \le \log_p|H\Phi(S):\Phi(S)|$ and such that $H/\langle \mcK \rangle$ has a normal $p$-complement.

In particular, if $\Omega$ is the set of components of $G$ of order divisible by $p$, then $S$ has at most $d_f(S)$ orbits on $\Omega$ (acting by conjugation).\end{lem}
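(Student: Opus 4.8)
The plan is to translate everything into the finite $\bF_p$-vector space $V = S/\Phi(S)$ and reduce the conclusion to Corollary~\ref{tatecor}(ii). For a finite normal subgroup $X$ of $G$ the intersection $X\cap S$ is a $p$-Sylow subgroup of $X$, and I write $W_X = (X\cap S)\Phi(S)/\Phi(S)\le V$; similarly put $W_H = (S\cap H)\Phi(S)/\Phi(S)$. The target is a subset $\mcK\subseteq\mcX$ with $S\cap H\le\Phi(S)\langle\mcK\rangle$: once this holds, taking $M=H$ and $N=\langle\mcK\rangle$ (both normal in $G$, with $\langle\mcK\rangle\le H$, so $MN=H$) in Corollary~\ref{tatecor}(ii) shows that $H/\langle\mcK\rangle = MN/N$ has a normal $p$-complement. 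Since $V$ has dimension $d(S)$, every subspace, in particular $W_H$, is finite-dimensional, so whatever $\mcK$ I produce is automatically finite and $\langle\mcK\rangle$ is a finite group.

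The key structural fact I will establish is that $p$-Sylow subgroups multiply along products of normal subgroups: if $X_1,\dots,X_k$ are finite normal subgroups of $G$ then $S\cap\langle X_1,\dots,X_k\rangle = (X_1\cap S)\cdots(X_k\cap S)$. This follows for two normal subgroups $A,B$ from the order identity $|A\cap S|\,|B\cap S| = |(A\cap B)\cap S|\,|(AB)\cap S|$ (each intersection with $S$ is the relevant $p$-Sylow subgroup, so $p$-parts behave like orders), and then by induction. Passing to a finite quotient $G/L$ with $S\cap L\le\Phi(S)$ (such $L$ exists, as noted in the proof of Lemma~\ref{tatefin}), in which $\overline{H}$ is generated by finitely many of the images $\overline{X}$, this upgrades to $W_H = \sum_{X\in\mcX}W_X$. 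Granting this, I build $\mcK$ greedily: I keep an $X$ only when $W_X$ is not already contained in the span of those kept so far, stopping once the running sum equals $W_H$. Each retained generator raises the dimension by at least one, so $|\mcK|\le\dim W_H$; and $\sum_{X\in\mcK}W_X = W_H$ translates back, via multiplicativity applied to the finite family $\mcK$, into $(S\cap\langle\mcK\rangle)\Phi(S) = (S\cap H)\Phi(S)$, i.e. $S\cap H\le\Phi(S)\langle\mcK\rangle$, as required. Finally $\dim W_H = \log_p|(S\cap H)\Phi(S):\Phi(S)|\le\log_p|H\Phi(S):\Phi(S)|$, since $(S\cap H)\Phi(S)\le H\Phi(S)$, giving the stated bound.

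For the final assertion I pass to the subgroup $G^\sharp = S\,\rE_p(G)$, in which $S$ is still a $p$-Sylow subgroup. Each component of order divisible by $p$ lies in a finite normal subgroup of $G$ (Lemma~\ref{goodcomp}), and the components are normalised by $\rE_p(G)$ (distinct ones commute) while being permuted by $S$; so for each $S$-orbit $\mcO$ on $\Omega$ the product $K_\mcO = \overline{\langle\mcO\rangle}$ is a finite normal subgroup of $G^\sharp$. Applying the lemma to $G^\sharp$ with $\mcX = \{K_\mcO\}$ gives $H = \overline{\langle\Omega\rangle} = \rE_p(G)$ and a finite $\mcK$ with $H/\langle\mcK\rangle$ having a normal $p$-complement. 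But $H$ is a central product of quasisimple groups all of order divisible by $p$, none of which admits a nontrivial normal $p$-complement, and by Lemma~\ref{goodcomp} no proper subcollection of the $K_\mcO$ generates $H$; hence $H/\langle\mcK\rangle$ is trivial and $\mcK$ is all of $\mcX$. In particular $\Omega$ has only finitely many $S$-orbits, $S\cap\rE_p(G)$ is a finite normal subgroup of $S$, and the number of orbits is $|\mcK|\le\dim W_H = \log_p|(S\cap\rE_p(G))\Phi(S):\Phi(S)|\le d_f(S)$.

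I expect the main obstacle to be the multiplicativity of $p$-Sylow subgroups across the (possibly infinite) generating family and its clean reduction to a finite quotient, that is, the identity $W_H = \sum_{X}W_X$; everything else is either a direct appeal to Corollary~\ref{tatecor} or elementary linear algebra.
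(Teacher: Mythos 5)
Your proof is correct and follows essentially the same route as the paper's: translating everything into subspaces of $S/\Phi(S)$, extracting a spanning subfamily of size at most the dimension, applying Corollary~\ref{tatecor}(ii), and, for the final assertion, applying the first part to the $S$-orbit products of components inside $S\,\rE_p(G)$ and invoking Lemma~\ref{goodcomp}. The only real difference is that you explicitly justify the identity $W_H = \sum_{X\in\mcX}W_X$ (via Sylow multiplicativity across normal subgroups and reduction to a finite quotient), a step the paper's proof treats as immediate.
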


\begin{proof}Given a normal subgroup $N$ of $G$, write $V_S(N) = (N \cap S)\Phi(S)/\Phi(S)$, regarded as a subspace of $S/\Phi(S) \cong (\bF_p)^{d(S)}$.  Since $H$ is generated by $\mcX$, there are $H_1, \dots, H_k \in \mcX$ such that
\[ V_S(H) = V_S(H_1) + \dots + V_S(H_k),\]
and such that $k \leq \dim(V_S(H)) =  \log_p|H\Phi(S):\Phi(S)|$.  Now set $\mcK = \{H_1, \dots, H_k\}$ and let $K = \langle \mcK \rangle$; then clearly 
\[ \Phi(S)(H \cap S) = \Phi(S)(K \cap S),\]
so $H/K$ has a normal $p$-complement by Corollary \ref{tatecor} (ii).

For the final assertion, let $H = \langle \Omega \rangle$.  Without loss of generality, we may assume $G = SH$; as $H$ is a central product of the elements of $\Omega$, the $S$-orbits on $\Omega$ are the same as $G$-orbits.  Indeed we have $H = \langle \mcX \rangle$, where $\mcX$ consists of the normal subgroups of $G$ formed by taking the product of the $S$-conjugates of a single element of $\Omega$.  Since no element of $\mcX$ is redundant in generating $H$ and $H$ has no $p$-separable images, we conclude that $|\mcX| \le d_f(S)$, so there are at most $d_f(S)$ orbits of $S$ on $\Omega$.\end{proof}

\begin{lem}\label{elabcrit}Let $P$ be a finite abelian $p$-group.  Write $\Omega_i(P)$ for the group of elements of $P$ of order dividing $p^i$.  Let $\alpha$ be a non-trivial automorphism of $P$ of order coprime to $p$.  Then $\alpha$ induces a non-trivial automorphism of $\Omega_1(P)$.\end{lem}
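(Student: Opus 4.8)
The plan is to prove the contrapositive: assuming $\alpha$ centralises $\Omega_1(P)$, I will show $\alpha$ centralises all of $P$, contradicting non-triviality. Equivalently, and more directly, I will exhibit a non-trivial $\alpha$-invariant subgroup of $\Omega_1(P)$ on which $\alpha$ acts with no non-zero fixed points. Write $H = \langle \alpha \rangle$, a non-trivial finite $p'$-group acting faithfully on the finite abelian $p$-group $P$. Since $P$ is abelian and $H$ is a $p'$-group, Lemma~\ref{cinvlem}(iii) gives the decomposition $P = [P,H] \times \rC_P(H)$ into $\alpha$-invariant subgroups. Set $Q = [P,H]$; since $\alpha$ acts faithfully and non-trivially we have $\rC_P(H) \ne P$, so $Q \ne 1$.

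First I would check that $\alpha$ acts on $Q$ with no non-zero fixed points, i.e.\ $\rC_Q(H) = 1$. Applying Lemma~\ref{cinvlem}(iii) to the $\alpha$-invariant abelian group $Q$ gives $Q = [Q,H] \times \rC_Q(H)$, so it suffices to show $[Q,H] = Q$, that is $[[P,H],H] = [P,H]$. This is the standard feature of coprime action: writing $C = \rC_P(H)$ we have $[C,H] = 1$, and since commutators respect the direct decomposition, $[P,H] = [\,[P,H] \times C,\, H\,] = [[P,H],H]$, as required. Hence $\rC_Q(H) = 1$.

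To finish, note that $Q$ is a non-trivial finite abelian $p$-group, so its socle $\Omega_1(Q)$ is non-trivial; it is characteristic in $Q$ and therefore $\alpha$-invariant, and it is contained in $\Omega_1(P)$ because every element of $Q$ of order dividing $p$ is such an element of $P$. Any fixed point of $\alpha$ in $\Omega_1(Q)$ lies in $\rC_Q(H) = 1$, so $\alpha$ acts on the non-trivial group $\Omega_1(Q)$ with no non-zero fixed points, hence non-trivially. As $\Omega_1(Q) \le \Omega_1(P)$ is $\alpha$-invariant, $\alpha$ induces a non-trivial automorphism of $\Omega_1(P)$.

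The only substantive point is the fixed-point-free claim $\rC_Q(H) = 1$; the rest is bookkeeping with the socle. An alternative route avoiding this refinement is to use the characteristic filtration $1 = \Omega_0(P) \le \Omega_1(P) \le \cdots \le \Omega_m(P) = P$, where $p^m$ is the exponent of $P$: multiplication by $p$ induces injective, $\alpha$-equivariant maps $\Omega_{i+1}(P)/\Omega_i(P) \hookrightarrow \Omega_i(P)/\Omega_{i-1}(P)$, so triviality of $\alpha$ on $\Omega_1(P)$ would propagate up the filtration to give triviality on every factor, and Lemma~\ref{cinvlem}(i) would then force $H$ to be a pro-$p$ group, contradicting coprimality. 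I expect the first argument to be the cleaner one to write up.
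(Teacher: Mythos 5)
Your proof is correct, but it takes a genuinely different route from the paper. You rely on the coprime-action decomposition $P = [P,H] \times \rC_P(H)$ (Lemma~\ref{cinvlem}(iii)) together with the standard identity $[P,H,H]=[P,H]$, producing a non-trivial $\alpha$-invariant subgroup $Q=[P,H]$ on which $\alpha$ acts without non-trivial fixed points, and then restricting to the socle $\Omega_1(Q)$. This actually yields slightly more than the statement asks: $\alpha$ is fixed-point-free on a non-trivial subgroup of $\Omega_1(P)$. The paper instead argues by contradiction with a bare-hands computation: assuming $\alpha$ fixes $\Omega_1(P)$ pointwise, it shows $(\alpha(x)x^{-1})^{p^i}=1$ for all $x$ (where $p^{i+1}$ is the exponent of $P$), so $[\langle\alpha\rangle,P]\le\Omega_i(P)$; iterating, the lower central series of $G=P\rtimes\langle\alpha\rangle$ descends through the $\Omega_j(P)$, so $G$ is nilpotent, hence the direct product of its Sylow subgroups, forcing $\alpha$ to centralise $P$. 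Your concluding sketch (propagating triviality up the $\Omega$-filtration via the injective equivariant maps and then invoking Lemma~\ref{cinvlem}(i)) is essentially a repackaging of the paper's argument, so you identified both routes. One pedantic point about your main argument: Lemma~\ref{cinvlem}(iii) as stated requires the action to be faithful, so your second application (to $Q$) needs $\rC_H(Q)=1$; this is automatic, since any element of $H$ centralising $Q$ also centralises $\rC_P(H)$ and hence all of $P=Q\times\rC_P(H)$, but it deserves a sentence. Each approach has its merits: the paper's is self-contained and needs no coprime-action machinery, while yours is shorter given the cited lemma and gives the stronger fixed-point-free conclusion.
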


\begin{proof}Clearly $\Omega_1(P)$ is characteristic, so $\alpha$ induces an automorphism of $\Omega_1(P)$.  Let $G = P \rtimes \langle \alpha \rangle$.  Suppose that $\alpha$ fixes $\Omega_1(P)$ pointwise.  Let $p^{i+1}$ be the exponent of $P$, and let $x \in P$.  Then $x^{p^i} \in \Omega_1(P)$, so $\alpha(x)x^{-1}$ has order dividing $p^i$, since
\[(\alpha(x)x^{-1})^{p^i} = \alpha(x^{p^i})(x^{p^i})^{-1} = 1.\]
In other words, $[\langle \alpha \rangle,P] \le \Omega_i(P)$ and hence $[G,G,G] \le \Omega_i(P)$ since $G' \le P$.  Repeating the argument, we see that $G$ is nilpotent.  But then $G$ is the direct product of its Sylow subgroups, so $\alpha$ centralises $P$.\end{proof}

\begin{proof}[Proof of Theorem \ref{critthm}]Let $G$ be a $p'$-embedding of $S$.  In all cases we will obtain subgroups $L_1,\dots,L_k$ of $G$, each belonging to one of the classes $\CEmbS(S)$, $\CEmbLF(S)$ and $\CEmbL(S)$ (depending on whether $\rE(G)=1$ and/or $G$ is $p$-separable), such that $|G:S|$ is bounded by a function of $\max |L_i:S|$ and $S$.  The conclusion will then follow by Corollary \ref{embsizecor}.

Let $P = \rO_p(G)$ and let $P \le F \le G$ such that $F/P =  \rF^*(G/P)$.  Then the order of $G/P$, and thus the index $|G:S|$, is bounded by a function of $|F:P|$, since the generalised Fitting subgroup of $G/P$ contains its own centraliser.  In turn $|F:P|=|F:\rO_p(F)|$ is bounded by a function of the $p'$-order of $F$, which is $|FS:S|$.  Thus we may assume $G = FS$.

In this case $G$ is the (permutable) product of the subgroups $S, F_{p_1},\dots,F_{p_m},E_1,\dots,E_n$, with $p,p_1,\dots,p_m$ distinct primes, such that $F_{p_i}/P = \rO_{p_i}(G/P)$ and $E_j/P$ is the group generated by the $S/P$-conjugates of a component of $G/P$.  Moreover, $n$ is at most $d(S)$ by Lemma \ref{pspace}.

Let $H = SF_{p_i}$ for some $i$.  Then $H$ is prosoluble.  Moreover, we have $\rC_H(P) \le P$, because $\rC_G(P)/\rZ(P)$ acts faithfully on $\rE(G)$ by Lemma~\ref{gfitlem}, whereas $H$ centralises $\rE(G)$.  Thus $H \in \EmbS(S)$.

If $G$ is prosoluble then $n=0$.  Otherwise let $K = SE_j$ for some $j$.  Since $E_j$ is normal in $G$ we have $\rO_{p'}(E_j) = \rO_{p'}(G) = 1$, so $\rO_{p'}(K)=1$.  Thus $K \in \Emb(S)$.  Also, any component of $K$ is a component of $E_j$ and hence of $G$, so if $\rE(G)=1$ then $\rE(K)=1$.

Thus to obtain a bound on $|G:S|$, it suffices to bound the $p'$-order of each of the subgroups $SF_{p_i}$ and $SE_j$ individually.

Suppose $G=SE_1$.  If $\rE(G)>1$, then some and hence all components of $G/P$ arise from components of $G$, that is $G=S\rE(G)$.  Since the components of $G/P$ form a single $S$-orbit, the same is true for the components of $G$, so $G \in \CEmbL(S)$.  Suppose instead that $\rE(G)=1$ and $G$ is $p$-separable.  Then by the Frattini argument, for each prime $q$ dividing $|E_1/P|$ we can find a $q$-Sylow subgroup $H_q/P$ of $E_1/P$ that is normalised by $S/P$, and then to bound the $p'$-order of $G$, it suffices to bound the $p'$-orders of the groups $SH_q$ for all primes $q$.  Note that as $G$ is $p$-separable, we have $\rE(G)=1$, ensuring that $SH_q$ is a $p'$-embedding of $S$ by Corollary~\ref{lflem}.  Thus this situation reduces to considering prosoluble $p'$-embeddings, which in turn reduces to $p'$-embeddings of the form $G=SF_{p_i}$.  

The only remaining case of interest if $G = SE_1$ is if $\rE(G)=1$ and $p$ divides $|E_1/P|$, in which case $G \in \CEmbLF(S)$ by construction.

We have now reduced to the case $G=SF_q$, where $q=p_1$ is some prime distinct from $p$.

Let $\mcN$ be the class of $p$-separable $p'$-embeddings of $S$ in which $S$ is normal.  If $G \in \mcN$ then $|G:S|$ divides $|\GL(d(S),p)|$, so by Corollary \ref{embsizecor}, $\mcN$ is finite.  Let $G \in \Emb(S)$ and let $R = \rO_p(G) \cap \Phi(S)$.  Suppose that $G$ satisfies all the conditions for membership of the class $\CEmbA(S)$, except that $\rN_G(R) \not= S$.  Then $\rN_G(R) > S$, so in fact $\rN_G(R) = G$ by the irreducibility of the action of $S$ on $HP/P$.  Similarly, if $G$ satisfies all the conditions for membership of the class $\CEmbC(S)$ except that $\rN_G(R) \not\le S\rZ(H)$, then $R \unlhd G$ by the irreducibility of the action of $S$ on $HP/\rZ(H)P$.  Thus $G \in \mcN$ by Lemma \ref{fratemblem}.  Write $\CEmbA(S)' = \CEmbA(S) \cup \mcN$ and $\CEmbC(S)' = \CEmbC(S) \cup \mcN$.

Let $H$ be a $q$-Sylow subgroup of $G$ contained in $F_q$.  Then $H$ is a finite $q$-group and $PH$ is normal in $G$.  Our strategy is to bound $|S:P|$: this will produce a bound for $|G:P|$, because $G/P$ acts faithfully on $P/\Phi(P)$, and by the Schreier index formula we have $d(P) \le |S:P|(d(S)-1)+1$.  Hence can freely replace $G$ with a subgroup $L$ of $G$ containing $S$ such that $\rO_p(L)=P$, or in other words $L=SH_0$ where $H_0$ is a subgroup of $H$ such that $S/P$ acts faithfully on $H_0P/P$.  Thus we may assume $G = \rO^q(G)$, since $\rO^q(G)$ is normal in $G$ and contains $S$.  By Theorem~\ref{thomcrit}, we may assume $H$ is critical in itself; otherwise we could replace $H$ by a critical subgroup without changing $\rO_p(G)$.  If $H$ is abelian, we can replace $H$ by $\Omega_1(H)$, by Lemma \ref{elabcrit}, and so assume $H$ is elementary abelian.

Let $M = HP/P$ if $H$ is abelian and let $M = HP/\rZ(H)P$ otherwise.  Then $M$ is a module for $S$ over the field of $q$ elements.  By a version of Maschke's theorem, $M$ is completely reducible.

Suppose $H$ is abelian.  Then we can write $H = H_1 \times \dots \times H_n$ such that for each $i$, $PH_i$ is a minimal normal subgroup of $SH_i$, and thus $SH_i \in \CEmbA(S)'$.  Let $P_i = \rO_p(SH_i)$.  Suppose now that $\CEmbA(S)$ is finite.  Then there are only finitely many possibilities for $P_i$ as a subgroup of $S$; thus there are only finitely many possibilities for $P = \bigcap^n_{i=1} P_i$.  We see from this that there are only finitely many $p'$-embeddings of $S$ of the form $SK$ where $K$ is abelian.  

Suppose now that $H$ is non-abelian.  Then we can write $H = H_1\dots H_n$ such that $H_i \cap H_j = \rZ(H)$ for $i$ and $j$ distinct, and so that $PH_i/P\rZ(H)$ is a chief factor of $SH_i$.  Again we set $P_i = \rO_p(SH_i)$ and note that $P = \bigcap^n_{i=1}H_i$.  If $H_i$ is non-abelian, this implies $SH_i \in \CEmbC(S)'$, while if $H_i$ is abelian, the finiteness of $\CEmbA(S)$ leaves only finitely many possibilities for $P_i$.  Thus if $\CEmbS(S)$ is finite, there are only finitely many possibilities for $P$ and hence for $G$.

The above argument shows that if $\CEmbS(S)$, $\CEmbL(S)$ and $\CEmbLF(S)$ are all finite, then $\Emb(S)$ is finite.  Note, moreover, that if $G$ is in the class $\EmbS(S)$, then $|G:S|$ is in fact bounded using groups in $\CEmbS(S)$ only, while if $G$ is in the class $\EmbLF(S)$, the groups in $\CEmbS(S) \cup \CEmbLF(S)$ suffice.  This demonstrates all three assertions in the theorem.\end{proof}

\section{Profinite groups with a cyclic or $2$-generator Sylow subgroup}

For this section, $S$ is a pro-$p$ group such that $d(S) \le 2$.  The significance of this condition (in light of Lemma \ref{fratemblem}) is that if $G$ is a $p'$-embedding of $S$, then either $S/\rO_p(G)$ is cyclic, or else $G$ is a Frattini $p'$-embedding and thus has a special structure.

First, consider the case that $S$ is (topologically) cyclic, that is $d(S)=1$.  Here the possibilities are very straightforward:

\begin{prop}Let $S$ be a cyclic pro-$p$ group, and let $G \in \Emb(S)$.  Then exactly one of the following holds:
\begin{enumerate}[(i)]
\item $S \unlhd G$ and $G/S$ is cyclic of order dividing $p-1$;
\item $S$ is finite and $G$ has a single component $Q$, such that $S \leq Q$ and $G/\rZ(Q)$ is almost simple.
\end{enumerate}
\end{prop}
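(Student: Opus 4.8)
The plan is to split according to whether $S$ is normal in $G$. Write $P = \rO_p(G) \le S$; since $S$ is cyclic its subgroups are totally ordered, so either $P = S$ (equivalently $S \unlhd G$) or $P \le \Phi(S)$ (so $G$ is a Frattini $p'$-embedding). These will yield alternatives (i) and (ii). They are mutually exclusive: in case (ii) we will have $S \le Q$ with $G/\rZ(Q)$ almost simple, and were $S$ also normal then $S/\rZ(Q)$ would be a normal $p$-subgroup of an almost simple group, forcing $S = \rZ(Q) \le \Phi(S)$, which is absurd.

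First suppose $S \unlhd G$, so $P = S$ and $G/S$ is pro-$p'$. As $S$ is abelian it is a central Sylow $p$-subgroup of $\rC := \rC_G(S)$, so $\rC = S \times D$ with $D = \rO_{p'}(\rC)$; but $D$ is characteristic in $\rC \unlhd G$, hence $D \le \rO_{p'}(G) = 1$, giving $\rC_G(S) = S$. Thus $G/S$ embeds in the abelian group $\Aut(S)$, and being pro-$p'$ it embeds in the Hall $p'$-subgroup of $\Aut(S)$, which is cyclic of order dividing $p - 1$ for every cyclic pro-$p$ group $S$. This is exactly alternative (i).

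Now suppose $P \le \Phi(S)$. First, $\rE(G) \ne 1$: otherwise $\rF^*(G) = \rF(G) = P$, so $\rC_G(P) = \rZ(P) = P$ by Lemma~\ref{gfitlem}, and the abelian group $S$ would centralise $P \le S$, giving $S \le \rC_G(P) = P \le \Phi(S)$, which is impossible. The key use of cyclicity is that $G$ has a \emph{unique} component: if $Q_1, Q_2$ were distinct components (each of order divisible by $p$, since $\rO_{p'}(G) = 1$), then $S$ would contain the central product of their nontrivial Sylow $p$-subgroups $T_1, T_2$; as subgroups of the cyclic group $S$ these are comparable, so say $T_1 \le T_2 \le Q_2$, whence $T_1 \le Q_1 \cap Q_2 \le \rZ(Q_1)$, forcing the quasisimple group $Q_1$ to have a central Sylow $p$-subgroup --- impossible as $p$ divides $|Q_1/\rZ(Q_1)|$. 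So there is a single component $Q \unlhd G$. Since $Q$ is perfect it centralises the cyclic group $P$ (because $\Aut(P)$ is abelian), so $P \cap Q = \rZ(Q)$ is a $p$-group lying in $P$; comparing $P$ and the Sylow $p$-subgroup $S \cap Q$ of $Q$ inside the cyclic group $S$, the alternative $S \cap Q \le P$ would make $S \cap Q \le \rZ(Q)$ a central Sylow $p$-subgroup of $Q$ (impossible again), so $P \le S \cap Q \le Q$ and $P = \rZ(Q)$. Consequently $\rF^*(G) = \rF(G)\rE(G) = PQ = Q$, and Lemma~\ref{gfitlem} gives $\rC_G(Q) = \rZ(\rF(G)) = \rZ(P) = P = \rZ(Q)$; by Lemma~\ref{centauto} every element of $G$ centralising $Q/\rZ(Q)$ lies in $\rC_G(Q) = \rZ(Q)$, so $G/\rZ(Q) = G/P$ is almost simple with simple socle $\bar Q := Q/\rZ(Q)$, hence finite, and its Sylow $p$-subgroup $\bar S := S/P$ is cyclic.

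This reduces everything to the single point $S \le Q$, equivalently $\bar S \le \bar Q$ (equivalently $p \nmid |G : Q|$), and this is where I expect the real difficulty. Everything above is elementary, but this step is not: if $\bar S \not\le \bar Q$, then a generator of $\bar S$ induces on $\bar Q$ an outer automorphism of $p$-power order centralising the full Sylow $p$-subgroup $\bar S \cap \bar Q$ of $\bar Q$. I would rule this out using the structure of the automorphism groups of the finite simple groups admitting a cyclic Sylow $p$-subgroup: the relevant fact is that whenever $p$ divides $|\Out(\bar Q)|$ for such a $\bar Q$, a Sylow $p$-subgroup of $\Aut(\bar Q)$ is already non-cyclic (as one sees, for instance, for the field automorphisms of groups of Lie type), so a cyclic Sylow $p$-subgroup of an almost simple extension of $\bar Q$ cannot meet $\bar Q$ properly. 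This is the step I expect to require the classification of finite simple groups, and it is where the genuine work lies. Once $S \le Q$ is secured, $S$ is finite because the component $Q$ is, completing alternative (ii).
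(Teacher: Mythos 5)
Your reduction is correct as far as it goes: the case split on whether $\rO_p(G) = S$ or $\rO_p(G) \le \Phi(S)$, the treatment of case (i) via $\rC_G(S) = S$, the non-vanishing of $\rE(G)$, the uniqueness of the component $Q$ (via comparability of subgroups of a procyclic group), the identification $\rO_p(G) = \rZ(Q)$, and the almost simplicity of $G/\rZ(Q)$ via Lemma~\ref{gfitlem} and Lemma~\ref{centauto} are all sound. But the proof has a genuine gap exactly where you flag it: the inclusion $S \le Q$ is never proved. You defer it to an assertion about finite simple groups with cyclic Sylow $p$-subgroups (that $p \mid |\Out(\bar Q)|$ forces a Sylow $p$-subgroup of $\Aut(\bar Q)$ to be non-cyclic), which you neither prove nor cite; as stated this is a classification-scale claim, and your argument stands or falls with it.

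The missing idea is that this step is elementary, using the control-of-transfer machinery the paper has already set up. Since $Q$ is the unique component, it is normal in $G$, and $p$ divides $|Q/\rZ(Q)|$, so $Q$ has no normal $p$-complement. By Corollary~\ref{tatecor}(i) (Tate's theorem), if $S \cap Q \le \Phi(S)$ then $SQ$, and hence $Q$, would have a normal $p$-complement; so $S \cap Q \not\le \Phi(S)$. But in the procyclic group $S$ the closed subgroups form a chain with $\Phi(S)$ the unique maximal one, so $S \cap Q \not\le \Phi(S)$ forces $S \cap Q = S$, that is $S \le Q$, and $S$ is then finite because components are finite. This is exactly how the paper argues: it lifts a component of $G/\rO_p(G)$ to a component $Q$ of $G$ (using that $\Aut(\rO_p(G))$ is abelian, so the perfect group in question centralises $\rO_p(G)$), and then applies Corollary~\ref{tatecor} to get $S \le Q$ in one line. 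No input from the classification of finite simple groups is needed anywhere; your diagnosis that "the genuine work lies" in this step mistakes a two-line application of Tate for a hard problem.
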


\begin{proof}Let $P = \rO_p(G)$.  If $S=P$, then case (i) occurs.  Otherwise $P \leq \Phi(S)$, so $G/P$ acts faithfully on $\rE_p(G/P)$ by Lemma \ref{fratemblem}.  Let $R/P$ be a component of $G/P$.  Then $R$ is a central extension of $P$ by $R/P$, since $\Aut(P)$ is $p$-separable, so there is a component $Q$ of $G$ such that $R = PQ$.  Since $Q \unlhd G$ but $Q$ is not $p$-separable, we have $S \cap Q \not\le \Phi(S)$ by Corollary \ref{tatecor}, so $S \le Q$.  Clearly now $Q = \rE_p(G) = \rE(G)$, and $G/\rZ(Q)$ is almost simple, since $G/P = G/\rZ(Q)$ acts faithfully on $Q/\rZ(Q)$.\end{proof}

We now obtain a list of possible structures for $p'$-embeddings of a $2$-generator pro-$p$ group.

\begin{thm}\label{2genthm}Let $S$ be a pro-$p$ group such that $d(S) = 2$, and let $G \in \Emb(S)$.  Write $P= \rO_p(G)$ and $H=G/\rO_p(G)$.
  
If $G$ is a standard $p'$-embedding, then exactly one of the following holds:
 
\begin{enumerate}[(i)]
\item $p$ is odd, $S/P$ is non-trivial cyclic and there is a quasisimple normal subgroup $Q$ of $H$ such that $S/P$ is a $p$-Sylow subgroup of $Q$;
\item $S/P$ is non-trivial cyclic, $H$ acts faithfully on $P/\Phi(P)$ and $| \rF^*(H)|$ is coprime to $p$.
\setcounter{saveenum}{\value{enumi}}
\end{enumerate}

If $G$ is a quasi-Frattini but not Frattini $p'$-embedding, then either (i) holds or the following holds:
\begin{enumerate}[(i)]
\setcounter{enumi}{\value{saveenum}}
\item $S = P$ and $H$ is isomorphic to a $p'$-subgroup of $\GL(2,p)$.
\setcounter{saveenum}{\value{enumi}}
\end{enumerate}
 
If instead $G$ is a Frattini $p'$-embedding, then ${\rC_H(\rE_p(H))=1}$ (so in particular ${\rE(H) = \rE_p(H)}$) and exactly one of the following holds:
\begin{enumerate}[(i)]
\setcounter{enumi}{\value{saveenum}}
\item There is a subgroup $Q$ of $G$ containing $S$ such that $Q/P$ is a non-abelian simple group with a $2$-generator $p$-Sylow subgroup;
\item $p$ is odd and there is a subgroup $Q$ of $G$ containing $S$ such that $Q/P$ is a direct product of two non-abelian finite simple groups (possibly isomorphic), each having a non-trivial cyclic $p$-Sylow subgroup;
\item $\rE(H)$ is the direct product of $p^l$ copies of a single non-abelian finite simple subgroup $Q$ of $H$ for some $l \ge 0$, with $\rE(H)$ being the $S$-invariant closure of $Q$, and $H/\rE(H)$ has a non-trivial cyclic $p$-Sylow subgroup.
\end{enumerate}
\end{thm}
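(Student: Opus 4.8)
The plan is to lean throughout on the dichotomy, valid because $d(S)=2$, that a normal subgroup $P$ of $S$ with $P\not\le\Phi(S)$ forces $S/P$ to be cyclic: indeed $P\Phi(S)/\Phi(S)$ is then a non-zero subspace of $S/\Phi(S)\cong(\bF_p)^2$, and either it is all of $S/\Phi(S)$ (so $P=S$ and $S/P=1$) or it is one-dimensional (so $d(S/P)=1$). By Lemma~\ref{tatefin} the group $G$ is virtually pro-$p$, so $P$ is open and $H=G/P$ is a finite group with $\rO_p(H)=1$ and Sylow $p$-subgroup $S/P$. I would record at the outset the component-counting observation that $\rE_p(H)/\rZ(\rE_p(H))$ is a direct product of the simple quotients of the components of order divisible by $p$, so the rank of $\Sylp(\rE_p(H))$ is at least the number of such components. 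In particular, when $S/P$ is cyclic there is at most one component of order divisible by $p$, i.e.\ $\rE_p(H)$ is either trivial or a single quasisimple normal subgroup $Q\unlhd H$.

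Since $G$ not Frattini means $P\not\le\Phi(S)$, the standard and quasi-Frattini-but-not-Frattini regimes both have $S/P$ cyclic, and in the standard case $S\ne P$ (else $R=\Phi(S)$ is normal), so $S/P$ is non-trivial cyclic. \emph{Standard case.} If $\rE_p(H)\ne1$, it is a single quasisimple $Q\unlhd H$ with cyclic Sylow $p$-subgroup $S/P\cap Q$; I would show $S/P\le Q$ by contradiction, since otherwise $S/P\cap Q\le\Phi(S/P)=\Phi(S)P/P$, so the preimage of $Q$ meets $S$ inside $\Phi(S)P$ and Corollary~\ref{tatecor}(ii) gives $Q$ a normal $p$-complement, impossible for a quasisimple group of order divisible by $p$. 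Then $p$ is odd, as a non-abelian simple group with cyclic Sylow $2$-subgroup would have a normal $2$-complement by Burnside; this is (i). If instead $\rE_p(H)=1$, then $G$ has no components (none of coprime order, since $\rO_{p'}(G)=1$, and none of order divisible by $p$, since these would map into $\rE_p(H)$), so $G\in\EmbLF(S)$, Corollary~\ref{lflem} gives the faithful action of $H$ on $P/\Phi(P)$, and $\rF^*(H)=\rO_{p'}(H)\rE(H)$ is coprime to $p$; this is (ii). \emph{Quasi-Frattini, not Frattini.} If $S\ne P$ then $S/P$ is non-trivial cyclic and acts faithfully on $\rE_p(H)$ by Lemma~\ref{fratemblem}, so $\rE_p(H)\ne1$ and the argument for (i) applies verbatim. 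If $S=P$ then $H=G/S$ is a $p'$-group by Schur--Zassenhaus, and its kernel on $S/\Phi(S)$ is a $p'$-group acting trivially on the Frattini quotient, hence a pro-$p$ group by Lemma~\ref{cinvlem}(ii) and therefore trivial; thus $H$ embeds as a $p'$-subgroup of $\GL(2,p)$, which is (iii).

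For the \emph{Frattini case} ($P\le\Phi(S)$), Lemma~\ref{fratemblem} gives $\rC_H(\rE_p(H))=1$, whence $\rE(H)=\rE_p(H)=:E$ and, crucially, $\rZ(E)\le\rC_H(E)=1$, so $E=Q_1\times\dots\times Q_m$ is a direct product of non-abelian finite simple groups permuted by $H$; moreover $P\le\Phi(S)$ forces $(S/P)/\Phi(S/P)\cong S/\Phi(S)$, so $d(S/P)=2$ exactly. Writing $W=(S/P)\cap E=\Sylp(E)=W_1\times\dots\times W_m$ with each $W_i=\Sylp(Q_i)\ne1$, the decisive split is whether $S/P\le E$, equivalently whether $H/E$ is a $p'$-group. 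If $S/P\le E$, then $W=S/P$ and $2=d(W)=\sum_i d(W_i)$, forcing either $m=1$ with $d(W_1)=2$ (case (iv), with $Q$ the preimage of $Q_1$ in $G$), or $m=2$ with $W_1,W_2$ cyclic (case (v), with $p$ odd by Burnside as above).

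The remaining situation, the Frattini case with $S/P\not\le E$, must be shown to yield (vi), and this is where I expect the genuine difficulty to lie. Here the image $C$ of $S/P$ in $H/E$ is a non-trivial $p$-group isomorphic to $(S/P)/W$. Passing to $S/\Phi(S)\cong(\bF_p)^2$, the image of $W$ is a proper (hence at most one-dimensional) subspace, and the main obstacle is to prove it is \emph{exactly} one-dimensional, i.e.\ $W\not\le\Phi(S/P)$: this forces $C$ to be cyclic and identifies $\Sylp(H/E)$ as the required non-trivial cyclic group. Granting this, $W$ fixes every component, so the permutation action of $S/P$ on $\{Q_i\}$ factors through the cyclic group $C$, each orbit therefore has $p$-power size $p^l$, and, exploiting that $d(S/P)=2$ with cyclic quotient $C$ makes $W$ generated by few conjugates, one shows the components form a single $S/P$-orbit exhausting $E$ and pairwise isomorphic, giving $\rE(H)=Q^{p^l}$ as the $S$-invariant closure of one simple $Q$. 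The technical heart is precisely this combinatorial analysis of a $2$-generated $p$-group acting on a direct product of simple groups: ruling out $W\le\Phi(S/P)$, ruling out more than one orbit, and extracting the regular wreath-type structure. Finally I would verify that the three Frattini alternatives are mutually exclusive, being separated by whether $S/P\le E$ and, within $S/P\le E$, by $m=1$ versus $m=2$, which completes the proof.
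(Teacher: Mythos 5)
Your trichotomy on $|S:P\Phi(S)|$ and your handling of the standard case, the quasi-Frattini case, and the Frattini alternatives (iv)/(v) are correct and essentially the paper's own argument (Lemma~\ref{fratemblem}, Corollary~\ref{lflem}, Corollary~\ref{tatecor} plus Burnside, and a rank count $2=\sum_i d(W_i)$ in place of the paper's slightly different use of Corollary~\ref{tatecor}). The genuine gap is exactly where you locate ``the technical heart'': in the Frattini case with $S/P\not\le E$ you leave unproved both decisive claims, namely that $W=E\cap(S/P)$ is \emph{not} contained in $\Phi(S/P)$, and that the simple factors of $E$ form a \emph{single} $(S/P)$-orbit. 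The mechanism you gesture at ($W$ ``generated by few conjugates'', a ``regular wreath-type structure'') is not how either fact is obtained, and without them case (vi) is not established, so the proof is incomplete.

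Both claims follow from the same normal-$p$-complement trick you already deployed in the standard case, so it is surprising you treat them as an open obstacle. First, since $P\le\Phi(S)$ we have $\Phi(S/P)=\Phi(S)/P$; so if $W\le\Phi(S/P)$, then Corollary~\ref{tatecor}(i) applied in $H$ to the normal subgroup $E$ gives $E$ a normal $p$-complement, which is absurd: $E$ is perfect (a direct product of non-abelian simple groups of order divisible by $p$), so a normal $p$-complement would force $E$ to be a $p'$-group. Hence $W\Phi(S/P)/\Phi(S/P)$ is exactly one-dimensional, and $\Sylp(H/E)\cong(S/P)/W$ is non-trivial cyclic. Second, the single-orbit claim is precisely the paper's Lemma~\ref{pspace}: let $R_1,\dots,R_k$ be the products of the $(S/P)$-orbits of components, so each $R_j$ is normal in $(S/P)E$ and $R_j\cap\prod_{i\ne j}R_i=1$. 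If $R_j\cap(S/P)\le\Phi(S/P)\prod_{i\ne j}R_i$, then Corollary~\ref{tatecor}(ii) would give $R_j\cong R_j\prod_{i\ne j}R_i\,/\prod_{i\ne j}R_i$ a normal $p$-complement, impossible as before; so one may choose, for each $j$, a vector of $V(R_j)=(R_j\cap S/P)\Phi(S/P)/\Phi(S/P)$ lying outside $\sum_{i\ne j}V(R_i)$, and these vectors are linearly independent, whence $k\le\dim V(E)=1$. Thus $E$ is the direct product of the $(S/P)$-conjugates of one component, $p^l$ of them, which is case (vi). In short, the missing idea is that Tate's theorem, packaged as Corollary~\ref{tatecor} and Lemma~\ref{pspace}, converts each unwanted Frattini containment into a normal $p$-complement inside a perfect group; the paper's proof of (vi) rests entirely on this, not on a direct combinatorial analysis of the $2$-generated action.
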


\begin{proof}Let $k = |S:P\Phi(S)|$.  Since $d(S)=2$, we have $k \in \{1,p,p^2\}$.

If $k=1$, then $S=P$ and we are clearly in case (iii) by Corollary~\ref{lflem}.  A $p'$-embedding with $S=P$ is evidently quasi-Frattini but not Frattini.

If $k=p$, then $S/P$ is non-trivial cyclic.  If $| \rF^*(H)|$ is coprime to $p$, we see that $\rE(G)=1$ since every component of $G$ must have order divisible by $p$, so $H$ acts faithfully on $P/\Phi(P)$ by Corollary \ref{lflem} and we are in case (ii).  In case (ii), $G$ is $p$-separable and therefore a standard $p'$-embedding by Lemma~\ref{fratemblem}.  If instead $p$ divides $ \rF^*(H)$, then there is some quasisimple subgroup $Q$ of $H$ of order divisible by $p$; this ensures that $|Q/\rZ(Q)|$ is also divisible by $p$.  Let $K$ be the normal closure of $Q$ in $H$.  Then $K \ge S/P$, since otherwise  we would have $K \cap S/P \le \Phi(S/P)$, which would imply that $K$ has a normal $p$-complement by Corollary~\ref{tatecor}.  Moreover, $K$ is a central product of copies of $Q$; since the $p$-Sylow subgroup of $K$ is cyclic, there is only room for one copy of $Q$, in other words $K=Q$.  We see that $p$ is odd because there are no non-abelian finite simple groups with cyclic $2$-Sylow subgroups (see for instance exercise 262 of \cite{Ros}).  Thus we are in case (i).

We may now assume $k=p^2$, in other words, $G$ is a Frattini $p'$-embedding.  We have $\rC_H(\rE_p(H))=1$ by Lemma \ref{fratemblem}.  To simplify notation, let us divide out by $P$; in other words, assume that $P=1$ (so $G=H$) and $S$ is finite.

Suppose $\rE_p(G) \ge S$.  By Corollary~\ref{tatecor} applied to $\rE_p(G)$, every component $Q$ of $\rE_p(G)$ satisfies $Q \cap S \not\le \Phi(S)K$, where $K$ is the product of the other components.  This leaves only two possibilities: either $\rE_p(G)$ is a non-abelian simple group $Q$ with a $2$-generator $p$-Sylow subgroup, or $\rE_p(G) = Q_1 \times Q_2$, where $Q_1$ and $Q_2$ are non-abelian simple groups with cyclic $p$-Sylow subgroups (here $p$ is necessarily odd).  These are cases (iv) and (v) respectively.

Finally, suppose $\rE_p(G) \not\ge S$.  We cannot have $\rE_p(G) \cap S \le \Phi(S)$, so $\Phi(S)$ has index $p$ in $\Phi(S)(\rE_p(G) \cap S)$.  By Lemma~\ref{pspace}, we see that $\rE_p(G)$ is the $S$-invariant closure of a single component $Q$, in other words $\rE_p(G)$ is the direct product of the $S$-conjugates of $Q$, whose number is a power of $p$ as $S$ is a pro-$p$ group.  Since $|S:\Phi(S)(\rE_p(G) \cap S)|=p$, the $p$-Sylow subgroup of $G/\rE_p(G)$ is non-trivial cyclic.  This is case (vi).\end{proof}

\begin{rem}(a) Only cases (ii) and (iii) can give rise to $p$-separable $p'$-embeddings, and case (iii) accounts for only finitely many $p'$-embeddings.  In cases (i), (iv) and (v), the isomorphism type of the simple group $Q/\rZ(Q)$ involved in $\rE(G/\rO_p(G))$ is restricted (see Lemma~\ref{dpdeg}), while in each case a bound on the order of $Q$ would imply a bound on the index $|G:S|$.  Thus in cases (i), (iv) and (v), the possibility of infinitely many $p'$-embeddings remains only because of the existence of infinitely many finite simple groups of Lie type of small rank (obtained by varying the field of definition).

(b) If $S$ is infinite and not finite-by-$\bZ_p$, then every finite normal subgroup of $S$ is contained in $\Phi(S)$, so $\rE(G)= 1$ for all $p'$-embeddings $G$ of $S$ by Corollary~\ref{tatecor}.\end{rem}

We now give a construction to demonstrate Proposition \ref{ascembed}.

\begin{eg}Let $p$ and $q$ be primes.  Let $\bF = \bF_{p^q}$ and let $\theta$ be the Frobenius automorphism of $\bF$.  Let $K$ be the set of clopen subsets of $\bZ_p$.  Let $F$ be the (elementary abelian) group of additive functions from $K$ to $\bF$, that is, functions $f:K \rightarrow \bF$ such that $f(u \cup v) = f(u) + f(v)$ whenever $u$ and $v$ are disjoint. Let $Z \cong \bZ_p$ act on $F$ by translating the elements of the domain, giving a semidirect product $S = F \rtimes Z$.  We claim that $S$ is a $2$-generator metabelian pro-$p$ group; indeed it is the inverse limit of the $2$-generator metabelian $p$-groups $F_n \rtimes \bZ_p/p^n\bZ_p$, where $F_n$ is the group of functions from $\bZ_p/p^n\bZ_p$ to $\bF$.  There is a natural surjective map $\phi_n: F \rightarrow F_n$ formed by restricting the domain, and then maps $F \rtimes \bZ_p \rightarrow F_n \rtimes \bZ_p/p^n\bZ_p$ are given by extending $\phi_n$ in a way that is compatible with the action of $Z$ on $F$.

The group $G$ is formed as $F \rtimes (Q \rtimes Z)$, equipped with the topology in which $F \rtimes Z$ is an open compact subgroup, where $Q$ is a subgroup of $\Aut(F)$ of the form $\bigcup_{i \in \bN} Q_i$.  As a group of automorphisms of $F$, the group $Q_i$ has the following description: $Q_i$ is a direct product of copies of $C_q$ indexed by the elements of $\bZ_p/p^i\bZ_p$, and a generator for the $j$-th copy of $C_q$ in $Q_i$ acts on $F$ by replacing $f(u)$ by $(f(u))^\theta$ for all $f \in F$ and all $u \in K$ such that $u \subseteq j$, with the consequent alteration of $f(u)$ in the more general case that $u \cap j \not= \emptyset$.  (Note that $j$ is a coset of $p^i\bZ_p$, being an element of $\bZ_p/p^i\bZ_p$).  It is easily verified that as subgroups of $\Aut(F)$, $Q_i$ is normalised by $Z$ and $Q_i < Q_{i+1}$ for all $i$.  Thus the groups
\[ G_i = F \rtimes (Q_i \rtimes Z) \]
for $i \ge 0$ form an ascending chain of subgroups of $G$, each open in the next, with union $G$.  Given any finite image $R$ of $G_i$, and given a conjugacy class $C$ of $R$, then $|C| = p^aq^b$ where $b$ is at most $p^i$.  Moreover, for a sufficiently large finite image, there is a conjugacy class contained in the image of $F$ whose size is divisible by $q^{p^i}$: let $\alpha \in \bF$ be primitive, let $f_i \in F$ be given by $f_i(U) = |U \cap \{0,1,\dots,p^i-1\}|\alpha$, and consider the conjugacy class of the image of $f_i$ in a sufficiently large finite quotient of $G_i$.  Thus the fusion of conjugacy classes of $S$ in $G_i$ and $G_j$ is inequivalent for $i \not= j$, even up to automorphisms of $S$.

For $p$ and $q$ distinct primes, it is clear that this construction satisfies all assertions in Proposition \ref{ascembed}.

In the construction, we notice totally disconnected, locally compact groups with a further interesting property.   Let $R = Q \rtimes Z \cong G/F$.  Let $U$ be an open compact subgroup of $R$.  We claim that $\rN_R(U)/U$ is finite, and indeed that $R$ acts properly by conjugation on the metric space of open compact subgroups of $R$ with metric given by
\[d(U,V) =  \log(|U: U \cap V||V: U \cap V|).\]
To prove that the action on the above metric space is proper, it suffices to show that the set $\{r \in R \mid |U: U \cap U^r| \le p^k\}$ is compact for all $k$ and fixed $U$, so we are free to take $U = Z$.  In this case the set $R_k = \{r \in R \mid |U: U \cap U^r| \le p^k\}$ decomposes as $R_k = (R_k \cap Q)Z$.  Now $Z$ is compact, and $R_k \cap Q$ is precisely the finite group $\rC_Q(p^kU) = Q_k$.  Thus $R_k$ is compact as required.

Note that the construction is valid even if $p=q$, in which case we obtain a metabelian totally disconnected, locally compact group $R$ that is the union of an ascending chain of open pro-$p$ subgroups, such that every open compact subgroup of $R$ has finite index in its normaliser.\end{eg}

\section{Normal subgroup conditions}

\begin{lem}\label{obphilem}Let $S$ be a finitely generated pro-$p$ group and let $N$ be an open normal subgroup of $S$.  Let $\mcK$ be the set of open normal subgroups of $S$ that are not contained in $N$.  The following are equivalent:
\begin{enumerate}[(i)]
\item $\mcK$ is finite;
\item $N$ contains every normal subgroup of $S$ of infinite index.
\end{enumerate}
\end{lem}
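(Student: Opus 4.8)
The plan is to prove both directions of the biconditional, working throughout with the fact (used implicitly via Corollary~\ref{embsizecor}) that a finitely generated pro-$p$ group has only finitely many open subgroups of any given index. The key structural observation is that an open normal subgroup $K$ of $S$ satisfies $K \not\le N$ precisely when $KN/N$ is a non-trivial subgroup of the finite $p$-group $S/N$, and there are only finitely many possible images $KN/N$. So the entire question reduces to understanding, for each non-trivial subgroup $\bar{L}$ of $S/N$, how many open normal subgroups $K$ of $S$ satisfy $KN = $ (the preimage of $\bar{L}$). The index of such a $K$ in $S$ is bounded below only when $K$ has finite index, and the crux is to separate the finite-index contributions from the infinite-index ones.

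\textbf{Proof of (ii) $\Rightarrow$ (i).}
First I would assume that $N$ contains every normal subgroup of $S$ of infinite index. Then any $K \in \mcK$ must have \emph{finite} index in $S$, since $K \not\le N$ forces $K$ to not be of infinite index (by the contrapositive of the hypothesis). So $\mcK$ consists entirely of open normal subgroups of finite index. To bound $|\mcK|$, note that every $K \in \mcK$ satisfies $KN = S'$ for no reason in particular, but more usefully: $K$ contains $S \cap N$-translates and $KN/N \le S/N$ is non-trivial, so $|S:K| \le |S:K \cap N| = |S:N|\cdot|N : K \cap N|$. The cleaner route is to observe directly that each such $K$ has index dividing $|S:K|$ where $|S/N : KN/N| \ge 1$, and since $K \supseteq$ a subgroup making $S/K$ a quotient of the finite group $S/(K\cap N)$; the essential point is that $K \cap N$ has index at most $|S:N|$ in $N$ is not automatic, so instead I would argue that the finitely many subgroups $KN$ (each an open subgroup of $S$ containing $N$) each admit only finitely many open normal $K$ with that given product $KN$ and with $K$ of bounded index --- and the bound comes because $K \cap N = K \cap N$ forces $|N : K\cap N| = |KN : K|$ is bounded by $|S:N|$. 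Hence $|S:K| \le |S:N|^2$, a fixed bound, and by finiteness of subgroups of bounded index, $\mcK$ is finite.

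\textbf{Proof of (i) $\Rightarrow$ (ii).}
For the converse I would argue by contraposition: suppose $N$ fails to contain some normal subgroup $L$ of $S$ of infinite index, so $L \not\le N$ yet $|S:L| = \infty$. Since $L$ has infinite index and $S$ is finitely generated, I can produce infinitely many distinct open normal subgroups from $L$: intersecting $L$ with the terms of a descending chain of open normal subgroups of $S$ (for instance the lower $p$-central series, whose terms are open with $\bigcap = 1$) gives a descending chain $L_i = L\cdot P_i$ of open normal subgroups, where $P_i$ are the chain terms. Because $L$ has infinite index, the quotients $S/L_i$ grow without bound, so infinitely many of the $L_i$ are distinct. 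The final point is that each $L_i \supseteq L \not\le N$, hence $L_i \not\le N$, so all these infinitely many distinct $L_i$ lie in $\mcK$, contradicting (i). This completes the contrapositive.

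\textbf{Anticipated obstacle.}
The main technical care is in the (ii)~$\Rightarrow$~(i) direction, in pinning down the index bound for members of $\mcK$: one must verify cleanly that $K \not\le N$ together with ``no infinite-index normal subgroup escapes $N$'' forces a \emph{uniform} finite bound on $|S:K|$, rather than merely forcing each individual $K$ to have finite index. The bound $|S:K| \le |S:N|^2$ via $|S:K| = |S:KN|\,|KN:K| = |S:KN|\,|N:K\cap N|$ together with $|S:KN| \le |S:N|$ and $|N : K \cap N| = |KN:K| \le |S:N|$ is the crux; once that uniform bound is in hand, finiteness of $\mcK$ follows immediately from the finiteness of open subgroups of bounded index in a finitely generated pro-$p$ group.
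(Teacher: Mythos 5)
Your proof of (i)~$\Rightarrow$~(ii) (the contrapositive: an infinite-index normal subgroup $L \not\le N$ yields the infinitely many distinct open normal subgroups $LP_i \in \mcK$) is correct, and it is essentially the paper's own argument for that direction. The problem is the other direction, which is where all the content of the lemma lies.

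Your proof of (ii)~$\Rightarrow$~(i) has a genuine gap. First, your opening step is vacuous: the members of $\mcK$ are by definition \emph{open} subgroups of a profinite group, hence automatically of finite index, so the hypothesis (ii) buys you nothing there --- and, crucially, your argument never invokes (ii) again. What remains is a purely group-theoretic index computation, and if it were valid it would prove that $\mcK$ is finite for \emph{every} finitely generated pro-$p$ group $S$ and every open normal $N$, which is false: take $S = \bZ_p \times \bZ_p$, $N = p\bZ_p \times \bZ_p$, and $K_n = \bZ_p \times p^n\bZ_p$; then every $K_n$ lies in $\mcK$ while $|S:K_n| = p^n$ is unbounded. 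The specific step that fails is the inequality $|KN:K| = |N:K\cap N| \le |S:N|$. The equality is the second isomorphism theorem, but the inequality is unsupported (your stated justification, that ``$K \cap N = K \cap N$ forces'' it, is a tautology): $N/(K\cap N)$ is a quotient of $N$, not a subquotient of $S/N$, and in the example above it has order $p^n$ while $|S:N|=p$. You even flag that this bound ``is not automatic'' and then assert it anyway.

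The missing idea is a compactness argument, and it is exactly here that finite generation of $S$ is genuinely used. The paper proves the contrapositive: if $\mcK$ is infinite, form the directed graph on $\mcK$ with an edge from $K_1$ to $K_2$ whenever $K_1 > K_2$ and $K_1/K_2$ is a chief factor of $S$; each vertex has only finitely many successors because $K/\Phi(K)$ is finite for $K$ open in a finitely generated pro-$p$ group, so K\H{o}nig's lemma produces an infinite strictly descending chain $L_1 > L_2 > \dots$ in $\mcK$. Its intersection $L$ is a normal subgroup of infinite index, and $L \not\le N$ because the nonempty compact sets $L_i \cap (S \setminus N)$ are nested, so their intersection is nonempty. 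Thus (ii) fails. Some such limiting argument is unavoidable: under hypothesis (ii) a uniform bound on $\{|S:K| : K \in \mcK\}$ does exist, but only as a \emph{consequence} of the finiteness of $\mcK$, not as an independently provable step on the way to it, so no local index manipulation of the kind you attempt can close the gap.
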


\begin{proof}Suppose there is a normal subgroup $P$ of $S$ of infinite index that is not contained in $N$.  Then $P$ is the intersection of a descending chain $P_1 > P_2 > \dots$ of open normal subgroups of $S$, none of which are contained in $N$.  Thus $\mcK$ is infinite.

Conversely, suppose $\mcK$ is infinite.  We construct a directed graph $\Gamma$ on $\mcK$ by drawing an edge $(K_1,K_2)$ if $K_1 > K_2$ and $K_1/K_2$ is a chief factor of $S$.  Then every vertex lies on a path from the vertex $S$; moreover, $K/\Phi(K)$ is finite for every $K \in \mcK$ since $S$ is finitely generated, so $\Gamma$ is locally finite.  Thus $\Gamma$ contains an infinite path by K\H{o}nig's lemma, so there is an infinite descending chain $L_1 > L_2 > \dots$ in $\mcK$.  By a standard compactness argument, the intersection of the $L_i$ is a normal subgroup $L$ which is not contained in $N$, but $L$ has infinite index.\end{proof}

\begin{defn}Let $G$ be a finite simple group.  Define $\deg(G)$ to be the smallest number $d$ such that $G$ is isomorphic to a subgroup of $\GL(F^d)$ for some field $F$.

Given a profinite group $G$ and a prime $p$, define $d_p(G)$ to be $d(S)$ where $S$ is a $p$-Sylow subgroup of $G$.\end{defn}

\begin{lem}\label{dpdeg}Let $p$ be a prime and let $d$ be an integer.  Then there some integer $c$ depending on $d$ and $p$ such that if $G$ is a finite simple group such that $\deg(G) \ge c$, then $d_p(G) \ge d$.\end{lem}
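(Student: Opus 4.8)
The plan is to prove the contrapositive in the quantitative form: for a fixed prime $p$ and fixed $d$, the finite simple groups $G$ with $d_p(G) < d$ have $\deg(G)$ bounded; equivalently, I would show that over all finite simple groups, $\deg(G) \to \infty$ forces $d_p(G) \to \infty$. This rests on the classification of finite simple groups (CFSG), so the first step is to invoke CFSG and dispose of the two easy families. The sporadic groups number only finitely many, so they impose no constraint. A simple group of prime order $C_r$ embeds in $\GL_1$ of a suitable field (one containing a primitive $r$-th root of unity), so $\deg(C_r) = 1$ and these never have large degree. This leaves the alternating groups and the groups of Lie type.

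The second step is to show that large $\deg$ forces a large ``size parameter''. For $\Alt(n)$ the deleted permutation module over $\bQ$ is faithful of dimension $n-1$ (for $n \ge 5$), so $\deg(\Alt(n)) \le n-1$; hence $\deg(\Alt(n)) \ge c$ forces $n \ge c+1$. For a group $G$ of Lie type of Lie rank $r$ over $\bF_q$ with $q = \ell^f$, I would use a faithful representation in the defining characteristic $\ell$ — for instance the adjoint representation of the ambient adjoint algebraic group $\mathbf{G}$ restricted to $G$ — whose dimension is at most $\dim \mathbf{G}$, a quantity depending only on the Lie type and not on $q$. Since there are only finitely many Lie types of each rank, this gives $\deg(G) \le M(r)$ for some function $M$ increasing in $r$, so $\deg(G) \ge c$ forces $r$ large. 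In particular, as the exceptional types all have rank at most $8$, for $c$ large enough $G$ must be a classical group of large rank.

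The third step is to show that a large size parameter forces $d_p(G)$ large, splitting into cases. For $\Sym(n)$ (hence $\Alt(n)$), a Sylow $p$-subgroup is a direct product $\prod_i W_i^{a_i}$ of iterated wreath products $W_i = \Sylp(\Sym(p^i))$, where $n = \sum_i a_i p^i$ in base $p$; an easy induction using $W_i^{\mathrm{ab}} \cong W_{i-1}^{\mathrm{ab}} \times C_p$ gives $d(W_i) = i$, so $d_p(\Alt(n)) \ge \lfloor \log_p n \rfloor \to \infty$. For a group of Lie type in its defining characteristic ($p = \ell$), a Sylow $p$-subgroup is the maximal unipotent subgroup $U$, whose Frattini quotient is supported on the simple root subgroups, giving $d(U) \ge r$. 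For the cross-characteristic case ($p \ne \ell$) I would invoke the known description (Weir; Carter--Fong) of Sylow $p$-subgroups of classical groups as direct products of wreath products $A \wr \Sylp(\Sym(m))$, where $A$ is a cyclic $p$-group and the degree $m$ of the wreathed symmetric group grows without bound as $r \to \infty$ — uniformly in $q$, because $m$ is comparable to $r/e$ with $e$ (the multiplicative order of $q$ modulo $p$) bounded above by $p-1$. Since a wreath product surjects onto its top group, $d_p(G) \ge d(\Sylp(\Sym(m))) \to \infty$.

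The main obstacle will be making the cross-characteristic classical case fully rigorous: I must track the $\Sylp(\Sym(m))$-section through the passage from the matrix group ($\GL$, $\Sp$, $\mathrm{GU}$, orthogonal) to the actual simple group ($\PSL$, $\mathrm{PSp}$, $\mathrm{PSU}$, and so on), that is, through restriction to the derived subgroup and division by the cyclic centre. Restriction to the derived subgroup preserves the block-permutation subgroup realising $\Sylp(\Sym(m))$ (its elements can be chosen with trivial determinant and spinor norm), and dividing by a cyclic central $p$-subgroup lowers the generator number by at most one; hence $d_p(G) \ge d(\Sylp(\Sym(m))) - O(1) \to \infty$, as required. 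The alternating and defining-characteristic families are by comparison routine once the wreath-product and unipotent computations are in place.
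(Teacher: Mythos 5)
The paper does not actually prove this lemma: its ``proof'' is a pointer to Section 1.7 of the author's thesis \cite{ReiPhD}, where the statement is obtained from the classification of finite simple groups together with the known structure of Sylow subgroups of alternating and classical groups --- exactly the ingredients you assemble. So your route is essentially the intended one, and its two-step logic is sound: first, $\deg(G)$ is bounded above by the natural size parameter (by $n-1$ for $\Alt(n)$ via the deleted permutation module, and for Lie type by a function of the rank alone, e.g.\ via the conjugation action of the adjoint group on $\mathrm{End}(V)$ for the natural module $V$, independently of $q$), so $\deg(G)\ge c$ eliminates the sporadic, prime-order, exceptional, and bounded-rank classical groups and forces $n$ or the rank $r$ to be large; second, the wreath-product structure of Sylow subgroups (of $\Sym(n)$; of classical groups in cross characteristic by Weir and Carter--Fong, with the symmetric degree $m$ comparable to $r/\ord(q,p)\ge r/(p-1)$ uniformly in $q$; the unipotent radical in defining characteristic) forces $d_p$ to grow, because $d$ of a group is at least $d$ of any of its quotients and the iterated wreath products have unbounded Frattini quotients. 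Your treatment of the descent to the simple group is also correct, and for odd $p$ is even easier than you suggest: the block-permutation copy of $\Sylp(\Sym(m))$ is a $p$-group with $p$ odd, so its image under any homomorphism to a group of order prime to $p$ (sign, determinant of permutation matrices, spinor norm) is trivial, whence it survives intact into $\mathrm{SL}$, $\Omega$, etc.; and a quotient by a cyclic central subgroup lowers $d$ by at most one, as you say. For $p=2$ one leans on Carter--Fong's treatment of the simple classical groups directly.

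One wrinkle you do not flag: the step ``for $\Sym(n)$ (hence $\Alt(n)$)'' is immediate only for odd $p$, where $\Alt(n)$ and $\Sym(n)$ share Sylow $p$-subgroups. For $p=2$, a Sylow $2$-subgroup $S$ of $\Alt(n)$ has index $2$ in a Sylow $2$-subgroup $T$ of $\Sym(n)$, and $d(\cdot)$ is \emph{not} monotone under passage to subgroups (the base of $C_p \wr C_p$ already shows this), so the inequality $d(S) \ge \lfloor \log_2 n\rfloor$ does not follow formally from the computation for $\Sym(n)$. The repair is one line, of the same kind as your central-quotient remark: since $|T:S|=2$, we have $T=\langle S,t\rangle$ for any $t\in T\setminus S$, so $d(T)\le d(S)+1$ and hence $d(S)\ge d(T)-1\to\infty$. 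With that patch, and with the cross-characteristic bookkeeping you already identify as the real work, your proposal is a complete and correct proof strategy.
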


\begin{proof}See \cite{ReiPhD}, section 1.7.\end{proof}

\begin{proof}[Proof of Theorem \ref{obphithm}]Let $t$ be an integer such that $d(S) - 1 \le t$, and also $|S:K| \le p^t$ for all $K \in \mcK$.  By Lemma \ref{obphilem}, every finite normal subgroup of $S$ is contained in $\Phi(S)$.  Thus $\Emb(S) = \EmbLF(S)$ by Corollary~\ref{finlfcor}.

Let $G$ be a $p'$-embedding of $S$, let $P=\rO_p(G)$, and let $E$ be such that $E/P = \rE_p(G/P)$.

Suppose $G$ is not a Frattini $p'$-embedding.  Then $P \in \mcK$, so $d(P) \le tp^t + 1$ by the Schreier index formula.  Since $G/P$ acts faithfully on $P/\Phi(P)$ (by Corollary \ref{lflem}), the index $|G:P|$ is bounded, leaving only finitely many possibilities for $G$ by Corollary \ref{embsizecor}.  In particular, this accounts for all prosoluble $p'$-embeddings, so $\EmbS(S)$ is finite.

Now suppose $|S:S^{(n)}|$ is finite for all $n$.  By the previous argument, we may now assume $G$ is a Frattini $p'$-embedding; this ensures that $G/P$ acts faithfully on $E/P$ by Lemma \ref{fratemblem}.  We proceed by a series of claims.

\emph{(i) We have $d_p(Q) \le tp^t + 1$ for every component $Q$ of $G/P$.}

By Corollary \ref{tatefin}, we have $E \cap S \not\le \Phi(S)$, so $E \cap S \in \mcK$, and hence $d(E \cap S) \le tp^t+1$ by the Schreier index formula; note that $E \cap S$ is a $p$-Sylow subgroups of $E$.  In turn, the direct decomposition of $E/P$ ensures that $d_p(Q) \le d(E \cap S)$.

\emph{(ii) Let $T$ be a $p$-Sylow subgroup of $E/P$ contained in $S/P$.  Then the derived length $l$ of $T$ is bounded by a function of $p$ and $t$.}

Let $Q$ be a simple direct factor of $E/P$.  It follows from claim (i) and Lemma \ref{dpdeg} that $\deg(Q)$ is bounded by a function of $p$ and $t$, so in particular $Q$ has a faithful linear representation of bounded degree.  Thus, by a theorem of Zassenhaus (\cite{Zas}), the derived length of any soluble subgroup of $Q$ is bounded by a function of $p$ and $t$.  Since $E/P$ is the direct product of its simple factors, the same bound applies to the derived length of $T$.

\emph{(iii) There is a bound on $|S:P|$ in terms of properties of $S$.}

Let $R = S/P$.  We already know that $|S:E \cap S|$ is at most $p^t$, so $T$ contains $R^{(t)}$.  But then $R^{(t+l)} \le T^{(l)} = 1$, so $S/P$ is soluble of derived length at most $t+l$.  This means that $P$ contains the open subgroup $S^{(t+l)}$, so $|S:P|$ is bounded by properties of $S$.

\emph{(iv) There is a bound on $|G:P|$ in terms of properties of $S$.}

We have a bound on $|S:P|$, giving a bound on $d(P)$ in terms of properties of $S$.  But $\rE(G)=1$, so $G/P$ is isomorphic to a subgroup of $\GL(d(P),p)$ by Corollary \ref{lflem}.

We conclude from claim (iv) and Corollary \ref{embsizecor} that $\Emb(S)$ is finite.\end{proof}

\begin{proof}[Proof of Theorem \ref{jiemb}]Let $\mcK$ be as in Theorem \ref{obphithm}.  Then $\mcK$ is finite by Lemma \ref{obphilem}.  If $S$ is insoluble, then $\Emb(S)$ is finite by Theorem \ref{obphithm}.  If $S$ is soluble, then the last non-trivial term in its derived series has finite index, so $S$ is virtually abelian.  In this case $S$ has finite subgroup rank $r$ say.  As a consequence, given any $p'$-embedding $G$ of $S$, then $G/\rO_p(G)$ is isomorphic to a subgroup of $\GL(r,p)$ by Corollary \ref{lflem} (since $d(P) \le r$ and $\rE(G)=1$), so $|G:S|$ is bounded by a function of $p$ and $r$, and thus $\Emb(S)$ is finite by Corollary \ref{embsizecor}.

For the final assertion, note that the just infinite groups $G$ having $S$ as a Sylow subgroup are precisely the $p'$-embeddings of $S$: since $S$ is infinite, any just infinite profinite group $G$ having $S$ as a Sylow subgroup must have $\rO_{p'}(G)=1$, and conversely any profinite group $G$ with $S$ as a Sylow subgroup and $\rO_{p'}(G)=1$ cannot have any finite normal subgroups, so $G$ is just infinite by \cite{ReiFI}, Lemma 4.\end{proof}

\section{Weakly regular pro-$p$ groups}

\begin{defn}Let $S$ be a finitely generated pro-$p$ group.  Say $S$ is \emph{weakly regular} if there does not exist a surjective homomorphism $S \rightarrow C_p \wr C_p$.\end{defn}

\begin{thm}[Yoshida \cite{Yosh} (finite version); Gilotti, Ribes, Serena \cite{GRS} (profinite version)]\label{yosh}Let $G$ be a profinite group and let $S$ be a $p$-Sylow subgroup of $G$.  Suppose $S$ is weakly regular.  Then $\rN_G(S)$ controls $p$-transfer in $G$.\end{thm}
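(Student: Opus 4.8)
The plan is to reduce the assertion to finite groups and there run Yoshida's transfer argument. By Theorem~\ref{tate} (interchangeability of the four conditions), saying that $\rN_G(S)$ controls $p$-transfer in $G$ is the same as the equality $G' \cap S = \rN_G(S)' \cap S$ of closed subgroups of $S$. The inclusion $\rN_G(S)' \cap S \subseteq G' \cap S$ is automatic, so only the reverse inclusion needs proof, and since both sides are closed it suffices to verify the corresponding equality in each finite quotient $G/N$ with $N$ open normal in $G$ and then pass to the inverse limit. In $G/N$ the image $SN/N$ is a $p$-Sylow subgroup, and it is again weakly regular, because a continuous surjection $SN/N \twoheadrightarrow C_p \wr C_p$ would compose with $S \twoheadrightarrow SN/N$ to give one on $S$. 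A Frattini argument identifies $\rN_{G/N}(SN/N)$ with $\rN_G(S)N/N$: since $S$ is a $p$-Sylow subgroup of $SN$, any $g$ normalising $SN$ may be corrected by an element of $SN$ so as to normalise $S$, whence $\rN_G(SN) = \rN_G(S)N$. Thus the profinite statement follows from the finite one.

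So assume $G$ is finite. By Theorem~\ref{tate} (condition (ii)) it is equivalent to prove $(G'G^p)\cap S = (\rN_G(S)'\rN_G(S)^p)\cap S$, which lets us work throughout in the elementary abelian quotient $\bar S = S/S'S^p$. The tool is the classical transfer homomorphism $V_{G\to S}\colon G \to S/S'$; reducing modulo $S^p$ and restricting to $S$ gives a map $\bar V_G\colon S \to \bar S$ which, up to the $p'$-scalar $[G:S]$, is the natural surjection $S \to G/G'G^p \cong \bar S$. By the focal subgroup theorem its kernel is exactly $(G'G^p)\cap S$, and likewise $\bar V_{\rN_G(S)}$ has kernel $(\rN_G(S)'\rN_G(S)^p)\cap S$. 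As the latter kernel is automatically contained in the former, it suffices to show that $\bar V_G$ and $\bar V_{\rN_G(S)}$ agree on $S$.

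The comparison is carried out with the double-coset (Mackey) formula for the transfer: for $x \in S$, choose representatives $g_j$ of the double cosets $S \backslash G / \langle x \rangle$, let $n_j$ be least with $g_j x^{n_j} g_j^{-1} \in S$, and then $\bar V_G(x) = \sum_j \overline{g_j x^{n_j} g_j^{-1}}$ in $\bar S$. The double cosets with a representative in $\rN_G(S)$ reassemble precisely into $\bar V_{\rN_G(S)}(x)$, so the discrepancy between the two transfers is supported on the off-diagonal double cosets $SgS$ with $g \notin \rN_G(S)$. The whole problem is thereby reduced to showing that the total off-diagonal contribution vanishes in $\bar S$.

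The hard part, and the only point at which weak regularity enters, is the analysis of these off-diagonal terms. Following Yoshida, I would let $\langle x \rangle$ act on the coset space $G/S$ and organise the off-diagonal contributions into $\langle x \rangle$-orbits, each of $p$-power length. The decisive observation is that an orbit of length $p$ whose cosets are fused into $S$ by an element outside $\rN_G(S)$ exhibits, inside $S$, a subgroup together with an element of order $p$ cyclically permuting $p$ of its conjugates, and hence a section isomorphic to $C_p \wr C_p$. Weak regularity forbids such a section, forcing every off-diagonal orbit either to contribute trivially modulo $S'S^p$ or to coincide with a term already counted in $\rN_G(S)$. Pinning down exactly which orbit configurations produce the wreath section, and checking that the survivors cancel in the elementary abelian quotient, is the substantive core of the argument; everything else is the bookkeeping of the transfer formula together with the routine profinite reduction above.
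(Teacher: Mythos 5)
You should know at the outset that the paper does not prove Theorem~\ref{yosh} at all: it is imported as a known result, with Yoshida cited for finite groups and Gilotti--Ribes--Serena for the profinite extension, so your proposal has to be judged as a free-standing argument. The part of it corresponding to the profinite extension is correct: $SN/N$ is a weakly regular $p$-Sylow subgroup of $G/N$ because weak regularity passes to quotients of $S$, the Frattini argument does give $\rN_{G/N}(SN/N)=\rN_G(S)N/N$, and since $G'$, $\rN_G(S)'$ and $S$ are closed, intersecting the finite-level equalities over all open normal $N$ recovers $G'\cap S=\rN_G(S)'\cap S$. The transfer bookkeeping for the finite case (focal subgroup theorem, Mackey decomposition, separating off the double cosets meeting $\rN_G(S)$) is also a correct frame.

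The gap is that the finite case --- which is the entire substance of the theorem --- is never proved: you explicitly defer ``the substantive core.'' Worse, the one mechanism you describe for invoking the hypothesis is unsound. Weak regularity, as defined in this paper and as required in Yoshida's theorem, forbids surjective homomorphisms $S\to C_p\wr C_p$, i.e.\ \emph{quotients}; it does not forbid subgroups or sections. An element of order $p$ permuting $p$ conjugates of a subgroup of $S$ produces only a section of wreath type, and this is compatible with weak regularity: for instance, the central product of $C_p\wr C_p$ with $C_{p^2}$, amalgamated over the centre of the wreath product, contains $C_p\wr C_p$ as a subgroup, yet has no quotient isomorphic to it (its centre is cyclic of order $p^2$, so it has a unique minimal normal subgroup, and the quotient by that subgroup has centre of order at least $p^2$, unlike $C_p\wr C_p$). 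So your ``decisive observation'' yields no contradiction. This section-versus-quotient distinction is exactly what separates Yoshida's theorem from its easier predecessors (such as Wielandt's control of $p$-transfer for \emph{regular} Sylow subgroups, where section-type hypotheses suffice): the hard content is to manufacture, from the failure of control of transfer, an actual epimorphism of $S$ onto $C_p\wr C_p$, which Yoshida does character-theoretically. That step is absent, so the proposal does not prove the theorem; your reduction combined with an honest citation of the finite case would, by contrast, be a legitimate proof of the profinite statement.
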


As a consequence, we obtain significant restrictions on the structure of $p'$-embeddings of a weakly regular pro-$p$ group.

Given distinct primes $p$ and $q$, write $\ord(p,q)$ for the least positive integer $a$ such that $p^a \equiv 1 \mod q$.  Note that the elementary abelian group of order $p^d$ has an automorphism of order $q$ if and only if $\ord(p,q) \le d$ (using the formula for the order of the general linear group).

\begin{thm}\label{wreg}Let $S$ be a weakly regular pro-$p$ group and let $G \in \Emb(S)$.
\begin{enumerate}[(i)]
\item Suppose $G$ is of the form $G=SH$ where $H$ is abelian and $\rO_p(G)H$ is normal in $G$.  Then $S \unlhd G$.  Consequently $\CEmbA(S) = \emptyset$.
\item Let $G \in \EmbS(S)$ and let $q$ be a prime divisor of $|G:S|$.  Then $S$ has an automorphism of order $q$, so in particular $\ord(p,q) \le d(S)$.  If $q$ divides $|G:\rN_G(S)|$, then the following additional conditions are satisfied:
\begin{enumerate}[(a)]
\item $S$ has an automorphism of order $q$ that acts reducibly on $S/\Phi(S)$, so in particular $\ord(p,q) < d(S)$;
\item If $p$ is odd, then $\ord(q,p)$ is even.\end{enumerate}
\item Let $K$ be a normal subgroup of $G$ such that $K \le S$, and let $Q/K$ be a component of $G/K$ of order divisible by $p$.  Then $S$ normalises $Q$.  In particular, if $G \in \CEmbLF(S) \cup \CEmbL(S)$, then $G$ has exactly one non-abelian composition factor.\end{enumerate}\end{thm}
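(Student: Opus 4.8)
The engine for all three parts is Yoshida's theorem (Theorem~\ref{yosh}): since $S$ is weakly regular and weak regularity passes to quotients (a surjection $S/K \to C_p \wr C_p$ would pull back to one from $S$), for every $G \in \Emb(S)$ and every normal $K \le S$ the normaliser of the Sylow image controls $p$-transfer, i.e. $G' \cap S = \rN_G(S)' \cap S$, equivalently $\rO^p(G) \cap S = \rO^p(\rN_G(S)) \cap S$. The second standard input is that whenever $G$ is $p$-separable or layer-free one has $\rC_G(P) \le P$ for $P = \rO_p(G)$ (Lemma~\ref{gfitlem}, Corollary~\ref{lflem}); in particular $\rC_G(S) = \rZ(S)$, so $\rN_G(S)/S$ embeds in $\Out(S)$. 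The recurring difficulty, and the source of the real work in every part, is that control of $p$-transfer becomes vacuous after dividing out $P$: in $\bar G = G/P$ the Sylow image is a complement to the relevant normal subgroup, so the focal condition holds automatically. One must therefore extract information at the level of $G$, using the non-split extension by $P$ together with $\rO_{p'}(G) = 1$.

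For (i), set $N = PH$. As $H$ is abelian and $PH \unlhd G$, the Sylow $p$-subgroup $N\cap S$ is normal in $N$, whence $N \cap S = P$; thus $\bar A := N/P$ is abelian of order prime to $p$, $\bar G = \bar S \ltimes \bar A$ with $\bar S = S/P$, and $\rO_p(\bar G)=1$ (so in particular $G$ is $p$-separable and $\rC_G(P)\le P$). Now $S \unlhd G$ is equivalent to $\rN_G(S) = G$; writing $N = P \rtimes X$ with $X \cong \bar A$ a Hall $p'$-subgroup, one has $\rN_G(S) = S\,\rN_X(S)$ with $\rN_X(S) \cong \rC_{\bar A}(\bar S)$, so the claim is equivalent to $[\bar A,\bar S]=1$; and $[\bar A,\bar S]=1$ makes $\bar A$ central, forcing $\bar S \le \rO_p(\bar G)=1$. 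To prove $[\bar A,\bar S]=1$ I would use control of $p$-transfer: both $\rO^p(G)\cap S$ and $\rO^p(\rN_G(S))\cap S$ lie in $P$, while $[P,X] \le \rO^p(G) \cap S = \rO^p(\rN_G(S)) \cap S$, a subgroup built only from $\rN_X(S) \cong \rC_{\bar A}(\bar S)$. Comparing these two subgroups of $P$ forces the part of $X$ projecting onto $[\bar A,\bar S]$ to centralise $P$, and since $\rC_G(P)\le P$ this $p'$-part is trivial. The corollary $\CEmbA(S)=\emptyset$ is then immediate: a group in $\CEmbA(S)$ has $G=\rO^q(G)$ with $H$ a non-trivial $q$-group, incompatible with $S \unlhd G$. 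The delicate step is exactly the comparison inside $P$, since the transfer identity says nothing modulo $P$.

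For (ii), note $\rN_G(S)/S \hookrightarrow \Out(S)$. If $q \mid |\rN_G(S):S|$, a $q$-element $y \in \rN_G(S)\setminus S$ induces a non-trivial automorphism of $q$-power order (as $\rC_G(S)\le S$); a power has order exactly $q$, and by coprimality it acts non-trivially on $S/\Phi(S) \cong \bF_p^{d(S)}$, giving $q \mid |\GL(d(S),p)|$, i.e. $\ord(p,q) \le d(S)$. When $q \mid |G:\rN_G(S)|$ I would reduce (as in the proof of Theorem~\ref{critthm}) to $\bar G = \bar S \ltimes V$ with $V$ a non-trivial $q$-group and $[\bar S,V]\ne 1$, and study the $\bF_q[\bar S]$-module $V/\Phi(V)$ together with the $\bF_p\langle\alpha\rangle$-module $S/\Phi(S)$ for an order-$q$ automorphism $\alpha$. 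The plan is to use control of $p$-transfer to force the non-trivial constituent to be self-dual: for $p$ odd, self-duality of a non-trivial irreducible $\bF_q[C_p]$-module of dimension $\ord(q,p)$ makes $\ord(q,p)$ even, which is (b), and at the same time it yields a proper non-trivial submodule of $S/\Phi(S)$, so $\alpha$ acts reducibly and $\ord(p,q) < d(S)$, which is (a). I expect the self-duality/parity argument to be the hardest point.

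For (iii), pass to $G/K$ and replace $S$ by the still weakly regular $S/K$, so assume $K=1$ and $Q$ is a component with $p \mid |Q|$; put $T = S \cap Q \ne 1$. If $S$ fails to normalise $Q$, its orbit consists of $m = p^b \ge p$ distinct, pairwise commuting components $Q_i$ with non-trivial Sylow $p$-subgroups $T_i = S\cap Q_i$, and $U = S \cap \langle Q^S\rangle = T_1\cdots T_m$ is normal in $S$, with $S$ permuting the $T_i$ transitively. Choosing an $\rN_S(Q)$-invariant non-zero functional on $T_1/\Phi(T_1)$ and transporting it around the orbit produces an $S$-equivariant surjection from $U$ onto the permutation module $\bF_p[\{Q_1,\dots,Q_m\}]$; collapsing a block system of size $p$ then yields a surjection $S \twoheadrightarrow C_p \wr C_p$, contradicting weak regularity. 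Hence $S$ normalises $Q$. For the final assertion, a group in $\CEmbLF(S) \cup \CEmbL(S)$ is generated over $\rO_p(G)$ by the $S$-conjugates of a single component of order divisible by $p$, so by the above there is exactly one such conjugate and $G$ has a unique non-abelian composition factor. The point requiring care is that the displayed map be a genuine quotient of $S$ rather than merely a section, which is what the invariant-functional choice is designed to secure.
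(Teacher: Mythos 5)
Your proposal founders in parts (i) and (iii) on exactly the steps you flag as ``delicate'': in both cases the purely internal statement you plan to prove is false, because you use weak regularity only through a single application of Yoshida's theorem to $G$ itself, whereas the paper's proof applies it inside proper subgroups containing $S$ (via Lemma~\ref{selftranslem} and Proposition~\ref{abecent}). For (i), the transfer identity $\rO^p(G)\cap S=\rO^p(\rN_G(S))\cap S$ together with $G=SX$, $X$ abelian, $\rO_p(G)X\unlhd G$, $\rO_{p'}(G)=1$ and $\rC_G(P)\le P$ does \emph{not} force the part of $X$ covering $[\bar A,\bar S]$ to centralise $P$. Take $p=2$, $V=\bF_4\otimes_{\bF_2}\bF_4$, and $G=V\rtimes\bigl(\langle a,t\rangle\times\langle b\rangle\bigr)$, where $a,b$ have order $3$, $t$ has order $2$, $tat^{-1}=a^{-1}$, $[a,b]=[t,b]=1$, and $a$ acts on $V$ as $\omega\otimes 1$, $b$ as $1\otimes\omega$, $t$ as $\phi\otimes 1$ ($\omega$ a generator of $\bF_4^\times$, $\phi$ the Frobenius). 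Then $P=V$, $S=V\rtimes\langle t\rangle$, $H=\langle a,b\rangle$ is abelian with $PH\unlhd G$ and $\rO_{2'}(G)=1$; since $(a-1)V=(b-1)V=V$ one computes $\rO^2(G)\cap S=V=\rO^2(\rN_G(S))\cap S$, so $\rN_G(S)=S\langle b\rangle$ controls $2$-transfer, yet $S$ is not normal and $a$ acts nontrivially on $P$. (This $S$ is of course not weakly regular---it surjects onto $C_2\wr C_2$---which is the point: your argument uses no further consequence of weak regularity, so it would ``prove'' a false statement.) The paper instead applies Yoshida inside $M=S[K,S]$, where $S$ is self-normalising: by Lemma~\ref{selftranslem}, a self-normalising Sylow subgroup in a group with trivial layer and trivial $p'$-core forces a normal $p$-complement, hence $M=S$; this is the content of Proposition~\ref{abecent}, from which (i) follows.

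Part (iii) fails the same way. A normal subgroup $U=T_1\times\dots\times T_m$ of $S$ with non-trivial factors permuted transitively, together with your $S$-equivariant surjection onto the permutation module, does \emph{not} yield a quotient of $S$ isomorphic to $C_p\wr C_p$; no choice of invariant functionals can fix this, because $C_p\wr C_p$ arises here only as a subquotient. Concretely, let $p=2$ and let $S$ be the central product $D_8\ast C_4$ (identify $r^2=c^2$, where $D_8=\langle r,s\rangle$, $C_4=\langle c\rangle$): it contains the normal four-group $\langle s\rangle\times\langle rsr^{-1}\rangle$ with the two factors swapped by $r$, exactly your configuration, yet its unique quotient of order $8$ is $C_2^3$, so it is weakly regular. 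The obstruction is visible in the centraliser $\langle s,c\rangle\cong C_2\times C_4$ of the four-group, in which the four-group has no complement at all, so the wreath product cannot be split off as a quotient. (This does not contradict the theorem, since $C_2$ cannot be the Sylow $2$-subgroup of a component; but your reduction discards all information about the $Q_i$ beyond their trace inside $S$, and what remains is insufficient.) The paper's proof of (iii) again runs through Lemma~\ref{selftranslem}: $\rN_{SR}(S)>S$ produces a $q$-element $x\in\rN_Q(S)$ with $q\neq p$, and if $y\in S$ failed to normalise $Q$ then $[x,y]$ would be a $q$-element lying in $S$, a contradiction. Finally, your plan for (ii)(a),(b) (``use control of transfer to force self-duality'') is an aspiration rather than an argument, and it is not how the paper proceeds: reducibility in (a) comes from the invariant subspace $\rO_p(G)\Phi(S)/\Phi(S)$ after reducing to $\CEmbC(S)$ (a reduction which itself needs part (i), Thompson's critical subgroup theorem and Proposition~\ref{abecent}), and (b) comes from the symplectic form induced by the commutator map on a critical non-abelian $q$-group (Lemma~\ref{sympauto}) combined with the classification of irreducible Sylow subgroups of $\Sp(2n,q)$ (Lemma~\ref{classicwr}).
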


Theorem~\ref{wreg} will be proved at the end of this section.

\begin{eg}Given $d(S)$ and $p$, let $\pi$ be the set of primes satisfying the conditions in Theorem~\ref{wreg} (ii).  For some values of $d(S)$ and $p$, the set $\pi$ is surprisingly small.  For instance, suppose $p=3$, and $d(S) \leq 11$.  Then $\pi = \{2,5,11,41\}$.  So if $S$ is a weakly regular pro-$3$ group generated by at most $11$ elements, and $G$ is a $3$-separable $3'$-embedding of $S$, then the prime divisors of $|G:\rN_G(S)|$ are a subset of $\{2,5,11,41\}$.  Similarly, if $p=7$ and $d(S) \leq 8$, then $\pi \subseteq \{3,5,19\}$.\end{eg}

\begin{lem}\label{selftranslem}Let $S$ be a pro-$p$ group and let $G \in \EmbLF(S)$.  Let $K$ be a subgroup of $G$ that properly contains $S$.
\begin{enumerate}[(i)]
\item $S$ does not control $p$-transfer in $K$.
\item Suppose $S$ is weakly regular.  Then $\rN_K(S) > S$.\end{enumerate}\end{lem}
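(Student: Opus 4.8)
The plan is to reduce both statements to the single observation that the layer-free hypothesis propagates to the overgroup $K$ and thereby forbids a normal $p$-complement. The key enabling facts are Corollary~\ref{lflem} (which says $\EmbLF$ is inherited by overgroups of $S$), Tate's theorem (Theorem~\ref{tate}), and Yoshida's theorem (Theorem~\ref{yosh}).

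First I would record the standing setup: since $S$ is a maximal pro-$p$ subgroup of $G$ it is also maximal pro-$p$ in $K$, so $S$ is a $p$-Sylow subgroup of $K$; and by Corollary~\ref{lflem}, $K \in \EmbLF(S)$, so in particular $\rO_{p'}(K) = 1$. For part (i), I would unwind the definition: ``$S$ controls $p$-transfer in $K$'' means $K' \cap S = S' \cap S = S'$, which by Theorem~\ref{tate} (applied with ambient group $K$ and controlling subgroup $S$) is equivalent to $\rO^p(K) \cap S = \rO^p(S) \cap S$. As $S$ is pro-$p$ we have $\rO^p(S) = 1$, so this forces $\rO^p(K) \cap S = 1$. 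Now $K = S\,\rO^p(K)$ with trivial intersection and $K/\rO^p(K)$ pro-$p$, so $\rO^p(K)$ is a normal $p$-complement in $K$, hence a normal pro-$p'$ subgroup; but $\rO_{p'}(K) = 1$ gives $\rO^p(K) = 1$, so $K = S$, contradicting $K > S$. The only place demanding care is the bookkeeping of which group plays the role of ``$G$'' in Theorem~\ref{tate} and the recognition that a normal $p$-complement is precisely a normal pro-$p'$ subgroup; once that is in place the contradiction is immediate.

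For part (ii) I would invoke Theorem~\ref{yosh}: since $S$ is weakly regular and is a $p$-Sylow subgroup of $K$, the normaliser $\rN_K(S)$ controls $p$-transfer in $K$. By part (i), $S$ itself does not control $p$-transfer in $K$, so $\rN_K(S) \neq S$; as $S \le \rN_K(S)$ always holds, this yields $\rN_K(S) > S$. I do not expect a genuine obstacle in either part, since the entire content is carried by the three cited results; the main thing to verify before applying them is that the hypotheses hold for $K$ in place of $G$ — namely that $S$ is a Sylow subgroup of $K$ and that weak regularity, being an intrinsic property of $S$, is automatically available — after which both conclusions follow by a one-line contradiction.
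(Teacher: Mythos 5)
Your proof is correct and takes essentially the same route as the paper: for (i), assume $S$ controls $p$-transfer in $K$, apply Theorem~\ref{tate} to conclude $\rO^p(K)$ is a normal $p$-complement, kill it using $\rO_{p'}(K)=1$ (which both you and the paper obtain from Corollary~\ref{lflem}), and contradict the maximality of $S$ as a pro-$p$ subgroup; for (ii), combine (i) with Theorem~\ref{yosh}. The only cosmetic difference is that you unwind why $\rO^p(K)$ is pro-$p'$, whereas the paper states $\rO^p(K)=\rO_{p'}(K)$ directly.
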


\begin{proof}(i) Suppose $S$ controls $p$-transfer in $K$.  Then by Theorem~\ref{tate}, $\rO^p(K)=\rO_{p'}(K)$ is a complement to $S$ in $K$.  But by Corollary~\ref{lflem} $\rO_{p'}(K)=1$, so $K$ is a pro-$p$ group, which is impossible since $S$ is a maximal pro-$p$ subgroup of $G$.

(ii) This follows immediately from part (i) together with Theorem~\ref{yosh}.\end{proof}

\begin{prop}\label{abecent}Let $S$ be a weakly regular pro-$p$ group, and let $G \in \EmbLF(S)$.  Let $H=S[G,S]$, and let $P=\rO_p(G)$.  Then:
\begin{enumerate}[(i)]
\item Any abelian $p'$-subgroup of $G/P$ that is normalised by $H/P$ is centralised by $H/P$;
\item $\rF(H/P)$ has nilpotency class at most $2$.
\end{enumerate}
\end{prop}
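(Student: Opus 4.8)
The plan is to establish part (i) first and then deduce part (ii) from it by a short lower-central-series argument.

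Standard commutator identities show that $[G,S]$ is normal in $G$ and that $H = S[G,S]$ is exactly the normal closure $S^G$ of $S$ in $G$; hence, writing $\overline{G} = G/P$ and $\overline{S} = S/P$, the group $\overline{H} = H/P$ is generated by the $\overline{G}$-conjugates of $\overline{S}$. Since $P \unlhd G$ is the largest normal pro-$p$ subgroup and $P \le S \le H$ with $H \unlhd G$, we get $\rO_p(H) = P$, and $H \in \EmbLF(S)$ by Corollary~\ref{lflem}. Note also that $\overline{S}$, being a quotient of $S$, is again weakly regular, and that every $p'$-subgroup of $G$ is finite because $G$ is virtually pro-$p$ by Lemma~\ref{tatefin}. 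For part (i), let $\overline{A} \unlhd \overline{H}$ be abelian of $p'$-order. For $\overline{g} \in \overline{G}$ the conjugate $\overline{A}^{\overline{g}^{-1}}$ is again an abelian $p'$-normal subgroup of $\overline{H}$ (as $\overline{H} \unlhd \overline{G}$), and $[\overline{A},\overline{S}^{\overline{g}}] = [\overline{A}^{\overline{g}^{-1}},\overline{S}]^{\overline{g}}$; since $\overline{H}$ is generated by the $\overline{S}^{\overline{g}}$, it therefore suffices to prove that \emph{for every abelian $p'$-subgroup $\overline{A} \unlhd \overline{H}$ one has $[\overline{A},\overline{S}] = 1$}, which then gives $\overline{A} \le \rZ(\overline{H})$.

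Suppose $B := [\overline{A},\overline{S}] \ne 1$. As $\overline{A}$ is finite of order prime to $p$, coprime action (cf. Lemma~\ref{cinvlem}(iii), with the roles of the two coprime factors interchanged) yields $\overline{A} = [\overline{A},\overline{S}] \times \rC_{\overline{A}}(\overline{S})$, so $\overline{S}$ acts fixed-point-freely on $B$, i.e. $\rC_B(\overline{S}) = 1$, and $B$ is $\overline{S}$-invariant. Let $B^* \le H$ be the full preimage of $B$ (so $B^* \ge P$ and $B^*/P = B$) and put $L = B^*S$. Then $S$ normalises $B^*$, so $B^* \unlhd L$, and $L \in \EmbLF(S)$ with $S$ a weakly regular $p$-Sylow subgroup. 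Because $P$ is the normal $p$-Sylow subgroup of $B^*$ we have $B^* \cap S = P$; hence for $b \in B^*$ the condition $[b,S] \le S$ is equivalent to $[b,S]\le P$, i.e. to $\overline{b} \in \rC_B(\overline{S}) = 1$. Thus $\rN_{B^*}(S) = P$ and $\rN_L(S) = S$. Now Theorem~\ref{yosh} forces $\rN_L(S) = S$ to control $p$-transfer in $L$; but $L > S$ (the nontrivial $p'$-part $B$ is not contained in the pro-$p$ group $S$), so Lemma~\ref{selftranslem}(i) says $S$ does \emph{not} control $p$-transfer in $L$. This contradiction forces $B = 1$, completing (i).

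For part (ii), write $\rF = \rF(\overline{H})$; since $\rO_p(\overline{H}) = 1$ this is a finite nilpotent $p'$-group. Suppose its nilpotency class is $c \ge 3$ and consider the lower central term $\gamma_{c-1}(\rF)$. Because $2(c-1) \ge c+1$, we have $[\gamma_{c-1}(\rF),\gamma_{c-1}(\rF)] \le \gamma_{2(c-1)}(\rF) = 1$, so $\gamma_{c-1}(\rF)$ is abelian; it is characteristic in $\rF$ and hence normal in $\overline{H}$, and of $p'$-order. By part (i) it lies in $\rZ(\overline{H})$, so $[\gamma_{c-1}(\rF),\rF] = 1$; but $[\gamma_{c-1}(\rF),\rF] = \gamma_c(\rF) \ne 1$, a contradiction. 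Hence $c \le 2$.

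The crux is the main step of (i): arguing inside $\overline{H}$ is useless, because $\overline{A}\,\overline{S}$ is $p$-nilpotent and the control-of-transfer statement there is vacuous. The essential device is to pass to the full preimage $L = B^*S \le H$, where the failure of $p$-nilpotency guaranteed by Corollary~\ref{lflem} and Lemma~\ref{selftranslem} is available, and to use the fixed-point-freeness $\rC_B(\overline{S}) = 1$ to pin down $\rN_L(S) = S$ precisely, so that Yoshida's theorem (Theorem~\ref{yosh}) can be played against Lemma~\ref{selftranslem}(i). Once (i) is in hand, (ii) is immediate from the lower-central-series observation above.
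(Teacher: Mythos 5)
Your proof is correct and is essentially the paper's own argument: the paper takes $K$ to be the preimage in $G$ of the abelian subgroup, forms $M=S[K,S]$ (which is exactly your $L=B^*S$, since $[K,S]P$ is your $B^*$), proves $\rN_M(S)=S$ via the coprime decomposition of Lemma~\ref{cinvlem}(iii), and then invokes Lemma~\ref{selftranslem} to force $M=S$ --- where part (ii) of that lemma is precisely your combination of Theorem~\ref{yosh} with its part (i) --- and the deduction of statement (ii) from statement (i) by the lower central series is argued identically. The one point to tidy: you assume $\overline{A}\unlhd\overline{H}$, hence $\overline{A}\le\overline{H}$, whereas the proposition concerns abelian $p'$-subgroups of $\overline{G}$ that are merely \emph{normalised} by $\overline{H}$ (this extra generality is what the paper later uses in Theorem~\ref{wreg}(i)); fortunately your argument covers that case verbatim once one notes that $B=[\overline{A},\overline{S}]$ automatically lies in $\overline{H}$, because each generator $[a,s]=(a^{-1}s^{-1}a)s$ belongs to $\overline{S}^{a}\,\overline{S}\le\overline{H}$ as $\overline{H}$ is normal in $\overline{G}$ and contains $\overline{S}$.
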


\begin{proof}(i) It suffices to consider abelian $q$-subgroups of $G/P$, where $q \in p'$.  Let $K \leq G$ such that $K'\rO^{q}(K) \leq \rO_p(G)$ and $[K,H] \leq \rO_p(G)K$; it is clear that this accounts for all abelian $q$-subgroups of $G/P$ that are normalised by $H/P$.  Then $\rN_{K/P}(S/P) = \rC_{K/P}(S/P)$, and $[K/P,S/P] \cap \rC_{K/P}(S/P) =1$ by part (iii) of Lemma~\ref{cinvlem}.  Let $M = S[K,S]$.  Since $P \leq S$, it follows that $\rN_M(S) = S$.  Hence $M=S$ by Lemma \ref{selftranslem}, so $[K,S] \leq K \cap S \leq P$.  The same argument shows that $K/P$ commutes with every $p$-Sylow subgroup of $G/P$.  But $H/P$ is generated by these $p$-Sylow subgroups by construction, so $K/P$ is centralised by $H/P$.

(ii) Write $T = \rF(H/P)$.  Since $H/P$ is finite, $T$ is nilpotent.  Let $c$ be the nilpotency class of $T$, and assume $c > 2$.  Then $\gamma_{c-1}(T)$ is abelian, since $[\gamma_{c-1}(T),\gamma_{c-1}(T)] \leq \gamma_{2c-2}(T)$, and $2c-2 = c + (c-2) > c$; thus $\gamma_{c-1}(T)$ is central in $T$ by part (i).  But then $\gamma_c(T)=1$, contradicting the definition of $c$.\end{proof}

\begin{cor}\label{fnilpcor}Let $S$ be a weakly regular pro-$p$ group, and let $G$ be a prosoluble $p'$-embedding of $S$.  Let $H=S[G,S]$, and let $P=\rO_p(H)$.  Then either $G$ is $p$-normal, or $\rF(H/P)$ has nilpotency class exactly $2$.\end{cor}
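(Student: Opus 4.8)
The plan is to read this Corollary off Proposition~\ref{abecent}, taking $G$ itself as the relevant member of $\EmbLF(S)$. First I would record that $H=S[G,S]$ is the normal closure $S^G$ of $S$ in $G$: since $[S,G]$ is normal in $\langle S,G\rangle=G$ and the image of $S$ is central in $G/[S,G]$, the subgroup $S[S,G]=S[G,S]$ is normal in $G$, and it plainly contains every conjugate $S^g=S[S,g]$. As $H\unlhd G$, the characteristic subgroup $\rO_p(H)$ is normal in $G$, so $\rO_p(H)\le\rO_p(G)$; conversely $\rO_p(G)\le S\le H$ is a normal $p$-subgroup of $H$, whence $\rO_p(G)\le\rO_p(H)$. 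Thus $P=\rO_p(H)=\rO_p(G)$, so the notation of the Corollary agrees with that of Proposition~\ref{abecent}. Because $G$ is prosoluble it has no components, so $\rE(G)=1$ and $G\in\EmbLF(S)$; and $G$ is virtually pro-$p$ by Lemma~\ref{tatefin}, so $P$ is open and $\bar H:=H/P$ is a finite soluble group.

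Now Proposition~\ref{abecent}(ii) gives that $\rF(\bar H)$ has nilpotency class at most $2$. If the class is exactly $2$ we are in the second alternative and there is nothing to prove, so suppose instead that $\rF(\bar H)$ is abelian. Since $\rO_p(\bar H)=\rO_p(H/\rO_p(H))=1$, the $p$-part of the nilpotent group $\rF(\bar H)$ is trivial, so $\rF(\bar H)$ is an abelian $p'$-subgroup of $G/P$; it is moreover normal in $\bar H=S[G,S]/P$. Proposition~\ref{abecent}(i) therefore applies to it and shows that $\rF(\bar H)$ is \emph{centralised} by $\bar H$, that is $\bar H\le\rC_{\bar H}(\rF(\bar H))$.

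Finally I would invoke self-centrality of the Fitting subgroup. As $\bar H$ is finite soluble we have $\rE(\bar H)=1$ and hence $\rF^*(\bar H)=\rF(\bar H)$, so Lemma~\ref{gfitlem} gives $\rC_{\bar H}(\rF(\bar H))=\rZ(\rF(\bar H))$, which equals $\rF(\bar H)$ since the latter is abelian. Combined with the previous paragraph this forces $\bar H=\rF(\bar H)$, a $p'$-group; but $S/P$ is a $p$-Sylow subgroup of $\bar H$, so $S/P=1$, i.e. $S=P=\rO_p(G)$ and $S\unlhd G$. Having a normal (hence unique) Sylow $p$-subgroup, $G$ is $p$-normal, which is the first alternative of the dichotomy. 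The only delicate bookkeeping is the identification $\rO_p(H)=\rO_p(G)$, needed so that Proposition~\ref{abecent} may be quoted with $P=\rO_p(H)$, and the observation that an abelian $\rF(\bar H)$ is an abelian $p'$-subgroup eligible for part (i); I expect the substantive collapse $\bar H=\rF(\bar H)$, forced by part (i) together with Lemma~\ref{gfitlem}, to be the main point, everything else being routine.
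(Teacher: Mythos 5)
Your proof is correct and follows essentially the same route as the paper's: Proposition~\ref{abecent}(ii) bounds the class by $2$, Proposition~\ref{abecent}(i) makes an abelian $\rF(H/P)$ central in $H/P$, and the self-centralising property of the Fitting subgroup then collapses $H/P$. The only (harmless) differences are that you make explicit the bookkeeping $\rO_p(H)=\rO_p(G)$ and prosoluble $\Rightarrow$ $G\in\EmbLF(S)$, and you finish by observing $H/P=\rF(H/P)$ is a $p'$-group so $S=P$, whereas the paper instead notes $H/P$ is abelian and that $H$ is generated by its $p$-Sylow subgroups, forcing $H=S$.
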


\begin{proof}By Proposition~\ref{abecent}, $\rF(H/P)$ has nilpotency class at most $2$, and clearly $H=P$ if $G$ is $p$-normal; hence we may assume $\rF(H/P)$ has nilpotency class less than $2$.  This means $\rF(H/P)$ is abelian, and so by the proposition $\rF(H/P)=\rZ(H/P)$.  Now $H/P$ is a finite soluble group, so $\rF(H/P) \geq \rC_{H/P}(\rF(H/P)) = H/P$, so $H/P$ is abelian, which means $S$ is normal in $H$.  By Sylow's theorem, $S$ is the unique $p$-Sylow subgroup of $H$.  But $H$ is generated by its $p$-Sylow subgroups.  Hence $H=S$, which means that $G$ is $p$-normal.\end{proof}

\begin{lem}\label{classicwr}Let $p$ be a prime, and let $q$ be a prime power coprime to $p$.  Let $n$ be any positive integer.  Suppose $p$ is odd, and let $G = \Sp(2n,q)$, considered as a subgroup of $\GL(V)$ where $V = \bF^{2n}_q$.  Suppose a $p$-Sylow subgroup of $G$ acts irreducibly on $V$.  Then $\ord(q,p)$ is even.\end{lem}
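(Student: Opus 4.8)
The plan is to extract the congruence $q^j \equiv -1 \pmod p$ from the action of a single central element of order $p$, using the interaction between the invariant symplectic form and the endomorphism field of the representation. First I would record that the hypothesis forces $P \ne 1$: the trivial group cannot act irreducibly on the $2n$-dimensional space $V$ since $2n \ge 2$. Hence $P$ is a non-trivial finite $p$-group, its centre $\rZ(P)$ contains an element $z$ of order $p$, and $z$ acts faithfully as $P \le \GL(V)$. Since $P$ acts irreducibly on $V$ over $\bF_q$, Schur's lemma together with Wedderburn's theorem shows that $E := \mathrm{End}_{\bF_q[P]}(V)$ is a finite field $\bF_{q^e}$; here $V$ is naturally an $E$-vector space on which $P$ acts $E$-linearly, and $z$, commuting with all of $P$, acts as multiplication by a scalar $\zeta \in E^\times$. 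As $z$ has order $p$ and acts faithfully, $\zeta$ is a primitive $p$-th root of unity in $E$.

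Next I would analyse how the $P$-invariant symplectic form $\langle\cdot,\cdot\rangle$ interacts with the $E$-structure. For each $\lambda \in E$, write $m_\lambda$ for the $\bF_q$-linear map $v \mapsto \lambda v$; since the form is non-degenerate, $m_\lambda$ has a well-defined adjoint $m_\lambda^\ast$. Because every $g \in P$ preserves the form (so $g^\ast = g^{-1}$) and $m_\lambda$ commutes with $P$, the adjoint $m_\lambda^\ast$ also commutes with $P$, and hence lies in $E$. The resulting map $\sigma$, defined by $m_\lambda^\ast = m_{\sigma(\lambda)}$, is a field automorphism of $E$, so $\sigma = \phi^j$ for some power of the Frobenius $\phi\colon x \mapsto x^q$; concretely $\langle \lambda v, w\rangle = \langle v, \lambda^{q^j} w\rangle$ for all $\lambda \in E$ and $v,w \in V$.

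Finally I would feed the central element into this relation. Since $z \in \Sp(V)$ we have $\langle \zeta v, \zeta w\rangle = \langle v,w\rangle$ for all $v,w$; combining with the adjoint identity gives $\langle v, \zeta^{q^j+1} w\rangle = \langle v,w\rangle$ for all $v,w$, whence $\zeta^{q^j+1} = 1$ by non-degeneracy and the faithfulness of the $E$-action. As $\zeta$ has order $p$, this reads $q^j \equiv -1 \pmod p$. Writing $f = \ord(q,p)$, we get $f \mid 2j$ from $q^{2j} \equiv 1$, while $f \nmid j$ since $q^j \equiv -1 \not\equiv 1 \pmod p$ (using that $p$ is odd); an odd $f$ would force $f \mid j$, a contradiction, so $f$ is even, as required.

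I expect the main obstacle to be the middle step: proving that the adjoint of an $E$-scalar is again an $E$-scalar and identifying the resulting automorphism with a power of Frobenius. This is precisely where irreducibility is essential — it is what guarantees (via Schur) that $z$ acts by an $E$-scalar and, equivalently, that the eigenvalues of $z$ over $\overline{\bF}_q$ form a single Galois orbit $\{\zeta^{q^i}\}$. Without this, the symplectic pairing could match the eigenvalue $\zeta$ with $\zeta^{-1}$ lying in a \emph{different} Frobenius orbit, and the desired conclusion would fail. I would also be careful to invoke the hypothesis that $p$ is odd only at the very end, where it is needed to exclude $\zeta = \zeta^{-1}$.
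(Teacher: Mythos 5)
Your proof is correct, but it takes a genuinely different route from the paper. The paper does not argue this lemma from first principles at all: it simply cites Table 1 of Stather's \emph{Constructive Sylow theorems for the classical groups}, which classifies the Sylow subgroups of $\Sp(2n,q)$ into two types, and observes that the Sylow subgroups of ``type B'' there are necessarily reducible, so an irreducible Sylow $p$-subgroup must be of the other type, which only occurs when $\ord(q,p)$ is even. Your argument instead extracts the congruence $q^j \equiv -1 \pmod p$ directly: Schur's lemma and Wedderburn give the endomorphism field $E = \bF_{q^e}$, a central element $z$ of order $p$ acts as a scalar $\zeta \in E^\times$ of order $p$, the adjoint with respect to the invariant form restricts to an $\bF_q$-automorphism $\phi^j$ of $E$ (each step here -- adjoints of $E$-scalars commuting with $P$, multiplicativity via commutativity of $E$ -- checks out), and form-invariance of $z$ forces $\zeta^{q^j+1}=1$, whence $\ord(q,p) \mid 2j$ but $\ord(q,p) \nmid j$, so $\ord(q,p)$ is even since $p$ is odd. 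What your approach buys: it is self-contained, it never uses that $P$ is a full Sylow subgroup (any irreducible $p$-subgroup suffices), and it never uses that the form is alternating (any non-degenerate $P$-invariant bilinear form, i.e.\ self-duality of the irreducible module, would do), so it proves a strictly more general statement. What the paper's approach buys is brevity and an explicit structural description of the Sylow subgroups, at the cost of sending the reader to an external classification whose ``type B is reducible'' claim must still be verified there.
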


\begin{proof}See Table 1 of \cite{Sta}.  The Sylow subgroups of `type B' in this table are necessarily reducible.\end{proof}

\begin{lem}\label{sympauto}Let $q$ be an odd prime, and let $U$ be a $q$-group of nilpotency class $2$.  Let $P$ be a $p$-group of automorphisms of $U$, where $p \not= q$, such that $P$ centralises $\rZ(U)$.  Suppose also that $M = U/\rZ(U)$ is irreducible as a $P$-module.  Let $N$ be a maximal subgroup of $U'$, and identify $U'/N$ with $\bF_q$.  Then the homomorphism $(-,-)_N$ from $M \times M$ to $U'/N$ defined by $(x\rZ(U),y\rZ(U))_N = [x,y]N$ is a non-degenerate, skew-symmetric, alternating bilinear form for $M$ as a vector space over $\bF_q$, and this form is preserved by $P$.  Hence $P$ acts on $M$ as a subgroup of $\Sp(M)$, the symplectic group on $M$ associated to the given form.  In particular, $p \cdot \ord(q,p)$ is even.\end{lem}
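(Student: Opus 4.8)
The plan is to realise $(-,-)_N$ as a non-degenerate alternating form preserved by $P$ and then to read off the arithmetic conclusion from Lemma~\ref{classicwr}. First I would record the standard facts about a class-$2$ group that make the commutator bilinear. Since $U$ has class $2$ we have $U' \le \rZ(U)$, so the identities $[xy,z]=[x,z][y,z]$ and $[x,yz]=[x,y][x,z]$ hold, and $[x,y]$ is unchanged if $x$ or $y$ is multiplied by a central element. Thus the commutator induces a well-defined, bi-multiplicative map $M \times M \to U'$; composing with reduction modulo $N$ (noting that $U'$ is a non-trivial abelian $q$-group, so its maximal subgroup $N$ has index $q$ and $U'/N \cong \bF_q$) produces $(-,-)_N$.

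Next I would verify that $M$ is genuinely an $\bF_q$-vector space and that the form has the asserted properties. The subgroup of $q$-th powers in the abelian group $M = U/\rZ(U)$ is characteristic, hence $P$-invariant, and it is proper in the non-trivial finite $q$-group $M$; so by irreducibility it is trivial, and $M$ is elementary abelian. Bilinearity over $\bF_q$ is then immediate from the bi-multiplicativity above, while $[x,x]=1$ gives the alternating (and hence skew-symmetric) property. For $P$-invariance the key observation is that $P$ centralises $\rZ(U) \ge U'$, so $P$ fixes $U'$ pointwise and acts trivially on $U'/N$; together with $[\sigma x,\sigma y]=\sigma[x,y]$ this yields $(\sigma v,\sigma w)_N=(v,w)_N$ for all $\sigma \in P$. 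The one step requiring care is non-degeneracy: the radical of $(-,-)_N$ is a $P$-invariant subspace of $M$, so by irreducibility it is either $0$ or all of $M$, and if it were all of $M$ then $[x,y]\in N$ for all $x,y$, forcing $U' \le N$ against the maximality of $N$. Hence the radical is trivial, the form is non-degenerate, $\dim_{\bF_q}M = 2n$ is even, and $P$ maps into $\Sp(M) = \Sp(2n,q)$.

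Finally, for the conclusion that $p\cdot\ord(q,p)$ is even I would split on the parity of $p$. If $p=2$ there is nothing to prove. If $p$ is odd, let $\bar P$ denote the image of $P$ in $\Sp(2n,q)$; as $M$ is irreducible over $P$ it is irreducible over $\bar P$, and any $p$-Sylow subgroup of $\Sp(2n,q)$ containing $\bar P$ has only fewer invariant subspaces, so it too acts irreducibly on $M$. Lemma~\ref{classicwr}, applied with the prime power $q$ coprime to the odd prime $p$, then gives that $\ord(q,p)$ is even, whence $p\cdot\ord(q,p)$ is even in this case as well. The only genuinely non-formal input is Lemma~\ref{classicwr}, which absorbs all the representation theory of symplectic groups; on our side the main obstacle is simply the non-degeneracy argument, where the maximality of $N$ is exactly what forbids the $P$-invariant radical from swallowing all of $M$.
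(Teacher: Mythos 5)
Your proposal is correct and follows essentially the same route as the paper's proof: induce the commutator form on $M = U/\rZ(U)$, use the fact that $P$ centralises $\rZ(U) \supseteq U'$ together with irreducibility to get $P$-invariance and non-degeneracy (the paper gets non-triviality from surjectivity of the commutator map onto $U'$, you from maximality of $N$ -- the same point), embed $P$ in $\Sp(M)$, and pass to a Sylow $p$-overgroup to invoke Lemma~\ref{classicwr}. Your write-up is simply more explicit than the paper's at the routine steps (well-definedness via the class-$2$ identities, elementary abelianness of $M$, the radical argument, and the parity split on $p$), all of which the paper leaves implicit.
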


\begin{proof}The equation $(x\rZ(U),y\rZ(U))_1 = [x,y]$ specifies a function $(-,-)_1$ from $M \times M$ to $U'$.  This is a homomorphism since $M$ is abelian, and hence it is surjective by the definition of $U'$; hence $(-,-)_N$ is a non-trivial quadratic form.  The form is preserved by $P$ since $P$ centralises $\rZ(U)$, which contains $U'$, and $M$ is irreducible as a $P$-module, so $(-,-)_N$ is non-degenerate on $M$.  Finally, $(-,-)_N$ is also skew-symmetric and alternating, since $[x,y] = [y,x]^{-1}$ and $[x,x] = 1$ are identities in any group.

We conclude that $P$ acts on $M$ as a subgroup of $\Sp(M)$.  Hence $\Sp(M)$ has a non-trivial irreducible $p$-subgroup.  This implies at least one of $p$ and $\ord(q,p)$ is even, by Lemma~\ref{classicwr}.\end{proof}

\begin{proof}[Proof of Theorem~\ref{wreg}]
(i) Let $P = \rO_p(G)$.  In this case, we see from Proposition~\ref{abecent} that $HP/P$ is central in $S[G,S]/P$, which implies that $S$ is normal in $S[G,S]$.  Since $S[G,S]$ is normal in $G$, it follows by Sylow's theorem that $S$ is normal in $G$.

(ii) Let $q$ be a prime divisor of $|G:S|$.  Then $q$ divides at least one of $|G:\rN_G(S)|$ and $|\rN_G(S):S|$.  If $q$ divides $|\rN_G(S):S|$, then there is an automorphism of $S$ of order $q$ induced by conjugation in $\rN_G(S)$, since $\rC_G(S) \le S$, and hence $\ord(p,q) \le d(S)$ by Lemma~\ref{cinvlem}.  So from now on we may assume $q$ divides $|G:\rN_G(S)|$.

Let $G_0 = 1$ and thereafter let $G_{i+1}$ be such that $G_{i+1}/G_i = \rO_p(G/G_i) \times \rO_{p'}(G/G_i)$.  We obtain a series
\[ G_1 < \dots < G_n = G\]
of open normal subgroups of $G$, where for all $i \ge 0$, the quotient $G_{i+1}/G_i$ is a pro-$p$ group if $i$ is even and a $p'$-group if $i$ is odd.  Set $H_i = G_{2i+1}$.  By the Frattini argument, for each index $i$ there is a $q$-Sylow subgroup $T_i/H_i$ of $\rO_{p'}(G/H_i)$ that is normalised by $S$.  The condition that $q$ divides $|G:\rN_G(S)|$ ensures that there is some $j \ge 0$ such that $S$ does not centralise $T_j/H_j$.  Now let $R = SG_{2j}/G_{2j}$ and consider the group $H = ST_j/G_{2j}$.  We see that $G/G_{2j} \in \EmbLF(R)$, so by Corollary \ref{lflem} we have $H \in \EmbLF(R)$; indeed $H \in \EmbS(R)$ since $H$ is $p$-separable.  Moreover, $R$ is weakly regular and $q$ divides $|R:\rN_H(R)|$, since $R$ does not normalise $S$.  Thus we may assume $G = ST$, where $T$ is a finite $q$-group, and that $T\rO_p(G)/\rO_p(G)$ is normal in $G/\rO_p(G)$.  Indeed, by Theorem~\ref{thomcrit}, we can find a characteristic critical subgroup $U$ of $T$ such that $S$ does not centralise $U$, and replacing $G$ with $S[G,S] = \rO^q(G)$ has no effect on the prime divisors of $|G:\rN_G(S)|$, since $G = S[G,S]\rN_G(S)$ by the Frattini argument.  The case $G \in \CEmbA(S)$ was already eliminated in part (i).  So we may assume $T$ is non-abelian, with no proper critical subgroups, so $T/\rZ(T)$ is elementary abelian.  Furthermore, we can replace $T$ with a subgroup $U > \rZ(T)$ such that $U\rO_p(G)/\rZ(T)\rO_p(G)$ is a chief factor of $G$, and then $\rZ(U) = \rZ(T)$ by Proposition~\ref{abecent}.  Thus we may assume $G \in \CEmbC(S)$.

Let $L = \rN_G(S)$.  Since $\rO^p(G) \cap S > 1$, we have $\rO^p(L) \cap S > 1$ by Theorem~\ref{yosh} and Theorem~\ref{tate}.  Applying Theorem~\ref{tate} again we see that $L'L^p \cap S \not= \Phi(S)$, which means that $L$ acts non-trivially on $S/\Phi(S)$.  At the same time, the action of $L$ on $S/\Phi(S)$ is reducible, since there is a proper non-trivial invariant subspace $\rO_p(G)\Phi(S)/\Phi(S)$: we have $\rO_p(G) < S$ since $S$ is not normal in $G$, so $\rO_p(G)\Phi(S) < S$ by the fact that $\Phi(S)$ is the intersection of all maximal closed subgroups of $S$, and we have $\rO_p(G) \not\le \Phi(S)$ by Corollary~\ref{tatecor}.  This establishes condition (a).

For condition (b), let $U = T\rO_p(G)/\rO_p(G) = \rF(G/\rO_p(G))$.  Note that $\rZ(U)$ is central in $G/\rO_p(G)$ by Proposition~\ref{abecent}, and $U/\rZ(U)$ is a chief factor of $G/\rO_p(G)$ since $G \in \CEmbC(S)$.  We are now in the situation of Lemma \ref{sympauto}, and so $p \cdot \ord(q,p)$ is even.

(iii) Let $R$ be the product of all $S$-conjugates of $Q$ and let $C = \rC_{SR}(R)$.  Then $SR/C$ is a $p'$-embedding of $SC/C$, so we may assume $G = SR$ and $\rC_G(R)=1$.  Moreover, $R$ is of the form $Q_1 \times \dots \times Q_n$ where $Q_i$ is an $S$-conjugate of $Q$.  Notice that $\rN_R(S)$ decomposes as
\[ \rN_{Q_1}(S_1) \times \dots \times \rN_{Q_n}(S_n),\]
where $S_i = S \cap Q_i$.  We have $\rN_{SR}(S) > S$ by Lemma~\ref{selftranslem}, so $\rN_{Q_i}(S) > S_i$ for some $i$; hence $\rN_Q(S) > S$.  Thus there is some element $x \in \rN_Q(S)$ of order $q$, where $q$ is a prime distinct from $p$.  Suppose that $S$ does not normalise $Q$; let $y \in S \setminus \rN_S(Q)$.  Then $x$ and $yxy^{-1}$ lie in distinct factors $Q_i$, so $z = xyx^{-1}y^{-1}$ has order $q$.  But $z$ is contained in $[S,\rN_Q(S)] \le S$, so $z$ is contained in a pro-$p$ group, a contradiction.\end{proof}

\section*{Acknowledgements} This paper is based on results obtained by the author as a PhD student at Queen Mary, University of London under the supervision of Robert Wilson, supported by EPSRC.  I would also like to thank Charles Leedham-Green for his advice and guidance during my doctoral studies.

\end{document}